\documentclass[11pt,leqno,a4paper,english]{amsart}
\usepackage{amssymb,amsmath,amsfonts,amsthm,amscd}
\usepackage{leftidx}
\usepackage{tikz}
\usepackage[latin1]{inputenc}
 \def\pasdegrille{\let\grille =
\pasgrille}  \def\aat#1#2#3{ \divide
\dimen1 by 48 \dimen3=\dimen1 \multiply \dimen1 by #1 \advance \dimen1
by -\dimen3 \divide \dimen1 by 101 \multiply \dimen1 by 100 \divide
\dimen2 by \count11 \multiply \dimen2 by #2
\setbox0=\hbox{#3}\ht0=0pt\dp0=0pt \rlap{\kern\dimen1 \vbox
to0pt{\kern-\dimen2\box0\vss}}\dimen1= \wd1 \dimen2=\ht1}
\def\pasgrille{ \count12= \dimen1 \divide \count12 by 50 \divide
\dimen2 by \count12 \count11 =\dimen2 \ \divide \dimen1 by 48==
\setlength{\unitlength}{\dimen1} \smash{\rlap{\ }} \dimen1= \wd1
\dimen2=\ht1 } \def\grille{ \count12= \dimen1 \divide \count12 by 50
\divide \dimen2 by \count12 \count11 =\dimen2 \ \divide \dimen1 by 48
\setlength{\unitlength}{\dimen1} \smash{\rlap{\graphpaper[1](0,0)(50,
\count11)}} \dimen1= \wd1 \dimen2=\ht1 }


\usepackage{mathrsfs}

\usepackage{hyperref}
\hypersetup{
    colorlinks=true,       
    linkcolor=blue,          
    citecolor=blue,        
    filecolor=magenta,      
    urlcolor=cyan           
}

\usepackage{subfigure,graphicx}
\usepackage{xcolor}
\usepackage{xspace}


\usepackage{enumerate}

\usepackage[all]{hypcap}



\newcommand{\mb}[1]{\ensuremath{\mathbb{#1}}}
\newcommand{\N}{{\mb{N}}}

\newcommand{\R}{{\mb{R}}}

\newcommand{\eps}{\varepsilon}

\newcommand{\A}{\ensuremath{A}}
\newcommand{\Atlas}{\ensuremath{\mathcal A}}
\newcommand{\D}{\ensuremath{\mathscr D}}
\newcommand{\E}{\ensuremath{\mathcal E}}

\renewcommand{\L}{\ensuremath{\mathcal L}}
\newcommand{\M}{\ensuremath{\mathcal M}}
\renewcommand{\O}{\ensuremath{\mathcal O}}

\renewcommand{\k}{\ensuremath{\kappa}}
\newcommand{\y}{\ensuremath{\varrho}}

\newcommand{\Con}{\ensuremath{\mathscr C}}

\newcommand{\Cinf}{\ensuremath{\mathscr C^\infty}}
\newcommand{\Cinfc}{\ensuremath{\mathscr C^\infty_{c}}}

\newcommand{\X}{\mathcal X} 
\newcommand{\Y}{\mathcal Y} 

\renewcommand{\d}{\ensuremath{\partial}}


\newcommand{\rhs}{r.h.s.\@\xspace}

\newcommand{\resp}{resp.\@\xspace}

\newcommand{\nhd}{neighborhood\xspace}

\newcommand{\suff}{sufficiently\xspace}


\newcommand{\Slim}{S}


\DeclareMathOperator{\Char}{Char}

\newcommand{\tchi}{\tilde{\chi}}
\newcommand{\hchi}{\hat{\chi}}

\newcommand{\tO}{\tilde{\O}}

\newcommand{\tB}{\tilde{B}}

\newcommand{\tk}{\tilde{\kappa}}
\newcommand{\tg}{\tilde{g}}

\newcommand{\ty}{\tilde{y}}
\newcommand{\tmu}{\tilde{\mu}}

%

\newcommand{\bichar}{bicharacteristic\xspace}
\newcommand{\bichars}{bicharacteristics\xspace}


\newcommand{\bld}[1]{\mbox{\boldmath $#1$}}

\newcommand{\transp}{\ensuremath{\phantom{}^{t}}}

\let \div \relax
\DeclareMathOperator{\div}{div}
\newcommand{\nablag}{\nabla_{\!\! g}}
\newcommand{\divg}{\div_{\! g}}


\DeclareMathOperator{\dist}{dist}

\DeclareMathOperator{\Op}{Op}

\DeclareMathOperator{\supp}{supp}


\DeclareMathOperator{\Hamiltonian}{H}
\newcommand{\Hp}{\Hamiltonian_p}
\newcommand{\Hpref}{\Hamiltonian_{p^0}}
\textwidth= 15.6cm
\textheight= 21.1cm
  \hoffset=-1cm 

\newcommand{\inp}[2]{(#1, #2)} 
\newcommand{\biginp}[2]{\big(#1, #2 \big)}

\newcommand{\bigdup}[2]{\big\langle #1, #2 \big\rangle}
\newcommand{\dup}[2]{\langle #1, #2 \rangle}

\newcommand{\ovl}[1]{\overline{#1}}



\newtheoremstyle{note}{} {}{\itshape}{-6pt}{\bf}{. --}{ }{}

\newtheorem{theorem}{Theorem}[section]
\newtheorem{proposition}[theorem]{Proposition}
\newtheorem{lemma}[theorem]{Lemma}
\newtheorem{corollary}[theorem]{Corollary}

\newtheorem*{theo*}{Theorem}

\newtheorem{theorembis}{Theorem}
\newtheorem{propositionbis}{Proposition}
\newcounter{theorembiss}

\newcounter{propositionbiss}

\theoremstyle{definition}
\newtheorem{definition}[theorem]{Definition}
\newtheorem{remark}[theorem]{Remark}

\newtheorem{definitionbis}{Definition}
\newcounter{definitionbiss}
\newcounter{sectionbiss}

\newtheorem{definitionter}{Definition}
\newcounter{definitionterr}
\newcounter{sectionterr}


\DeclareMathOperator{\loc}{loc}
\DeclareMathOperator{\comp}{comp}
\DeclareMathOperator{\phg}{ph}

\DeclareMathOperator{\obs}{obs}
\newcommand{\Cobs}{C_{\obs}}


\newcommand{\Norm}[2]{{\| #1 \|}_{#2}}
\newcommand{\bigNorm}[2]{{\big\| #1 \big\|}_{#2}}



\setcounter{tocdepth}{2}
\numberwithin{equation}{section}

\subjclass[2010]{35L05, 35Q49, 35Q93, 58Jxx, 93B05, 93B07}

\title[Measure propagation and wave controllability]
{Measure propagation along a $\Con^0$-vector field and wave controllability on a rough compact manifold} 

\author{Nicolas Burq}
\address{Nicolas Burq. Laboratoire de Math\'ematiques d'Orsay, Universit\'e
  Paris-Saclay, B\^atiment~307, 91405
  Orsay Cedex \& CNRS UMR 8628 \& Institut universitaire de France}
 \email{nicolas.burq@universite-paris-saclay.fr}
 \author{Belhassen Dehman}
  \address{Belhassen Dehman. Universit\'e de Tunis El Manar, Facult\'e
  des Sciences de Tunis, 2092 El  Manar \& Ecole Nationale d'Ing\'enieurs de Tunis, ENIT-LAMSIN, B.P. 37, 1002 Tunis, Tunisia. }
\email{Belhassen.Dehman@fst.utm.tn}
 \author{J\'er\^ome Le Rousseau}
 \address{J\'er\^ome Le Rousseau.
   Universit\'e Sorbonne Paris Nord, Laboratoire Analyse, G\'eom\'etrie et Applications, LAGA, CNRS, UMR 7539, F-93430, Villetaneuse, France.}
\email{jlr@math.univ-paris13.fr}

\date{\today}

\begin{document}
\begin{abstract}
  The celebrated Rauch-Taylor/Bardos-Lebeau-Rauch geometric control condition is central in
  the study of the observability of the wave equation linking this
  property to high-frequency propagation along geodesics that are the
  rays of geometric optics. This connection is best understood
  through the propagation properties of microlocal defect measures
  that appear as solutions to the wave equation concentrate. For a
  \suff smooth metric this propagation occurs along the
  bicharacteristic flow.  If one considers a merely $\Con^1$-metric
  this bicharacteristic flow may however not exist. The Hamiltonian
  vector field is only continuous; bicharacteristics do exist (as integral curves of this continuous vector field) but
  uniqueness is lost.  Here, on a compact manifold without boundary,
  we consider this low regularity setting, revisit the geometric
  control condition, and address the question of support propagation
  for a measure solution to an ODE with continuous coefficients. This
  leads to a sufficient condition for the observability and
  equivalently the exact controllability of the wave equation.
  Moreover, we investigate the stability of the observability
  property and the sensitivity of the control process under
  a perturbation of the metric of regularity  as low as Lipschitz.
\end{abstract} 

\maketitle
\tableofcontents

\section{Introduction}

The observability property for the wave equation has been intensively
studied during the last decades mainly because of its deep connection
with the problem of exact controllability. Until the end of the 80's,
most of the positive results of observability were established under a
(global) geometric assumption, the so-called $\Gamma$-condition
introduced by J.-L.~Lions, essentially based on and well-adapted to a
multiplier method \cite{Lions}. Later, following \cite {Rauch-Taylor},
Bardos, Lebeau and Rauch established in \cite{BLR:92}, boundary
observability inequalities under a geometric control condition (GCC for
short), linking the set on which the control acts and the generalized
geodesic flow. Proofs of this result are based on microlocal tools,
such as the propagation in phase space of wavefront sets in \cite{BLR:92} or 
the propagation of microlocal defect
measures in more modern proofs \cite{BurqGerard}. For the latter
approach, microlocal defect
measures originate from the concentration phenomena for  sequences of waves
 if one assumes that observability does not hold. Away
from boundaries one obtains 
\begin{align}
  \label{eq: ODE intro}
\transp{\Hp} \mu =0,
\end{align}
yielding the transport of the measure $\mu$ along the bicharacteristic
flow in phase space. This flow is generated by the hamiltonian vector field $\Hp$ 
associated with the symbol of the wave operator $p$.
However note that despite their high efficiency and
robustness, these methods present the great disadvantage of requiring
too much regularity in the coefficients of the wave operator and the
geometry. To define the generalized bicharacteristic flow and prove
the propagation properties mentioned above a minimal smoothness of the
metric and the boundary domain is needed. 
To our knowledge, the best
result, in the context of $\Con^2$ metrics, was proven in \cite{Burq}, and barely misses
the natural minimal smoothness required to define the geodesic flow
($W^{2, \infty}$) and thus the geometric control condition.

In this context, in the present article, we address the following
natural question: how can one derive observability estimate for the
wave equation from optimal observation regions in the case of a
nonsmooth metric?  This problem has already received some attention
and answers by E.~Zuazua and his collaborators, in
\cite{CastroZuazua02, CZ07}, and more recently in \cite{Fan-Zua} (see
also the result of \cite{Dehman-Ervedoza}). More precisely, in
\cite{CastroZuazua02, CZ07}, the authors prove a lack of observability
of waves in highly heterogeneous media, that is, if the density is of
low regularity. In \cite{Fan-Zua}, the authors establish observability
with coefficients in the Zygmund class and also observability with
loss when the coefficients are Log-Zygmund or
Log-Lipschitz. Furthermore, this result is proven sharp since one
observes an infinite loss of derivatives for a regularity lower than
Log-Lipschitz.  Note that these analyses are carried out in 
one space dimension. This calls for the following comments. First, in this simplified framework, for smooth coefficients all
the geodesics reach the observability region in uniform time:  captive geodesics
are not an issue. Second, proofs are based on a sidewise energy
estimate, a technique that is 
specific to the one-dimensional setting; the underlying idea consists of exchanging the roles of the
time and space variables and, finally in proving hyperbolic
energy estimates for waves with rough coefficients.  Unfortunately,
such a method does not extend to higher space dimensions. Furthermore, for
the low regularity considered in these articles, the geodesic flow is not well
defined. Proving propagation results for wavefront sets or microlocal
defect measure appears quite out of reach in such cases.

The present work is the first in a series of three articles devoted to
the question of observability (and equivalently exact controllability)
of wave equations with nonsmooth coefficients. Here, we initiate this
study on a compact Riemannian manifold with a rough metric, yet {\em
without boundary}, while the two forthcoming articles will present the
counterpart analysis on manifolds with boundary (or bounded domains of
$\R^d$) \cite{B-D-LR1, B-D-LR2}. The presence of a boundary yields a
much more involved analysis and in \cite{B-D-LR1, B-D-LR2} we develop
Melrose-Sj\"ostrand generalized propagation theory in a low regularity
framework. In the present article, our main result is the
observability of the wave equation with a $\Con^1$-metric, completed
with the stability of the observability property for small Lipschitz
($W^{1, \infty}$) perturbations of the metric.  More precisely, we
first show that if the geometric control condition in time $T$  holds for
geodesics associated with a $\Con^1$-metric $g$, 
then the
observability property holds for the wave equation, and equivalently
exact controllability. For this low regularity case one has to
carefully consider the meaning of the geometric condition (or more generally the meaning of a geodesic) since the
metric does not define a natural geodesic flow: geodesics are not
uniquely defined. Only their existence is guaranteed.
Second, we consider a reference $\Con^1$-metric $g^0$ as above and  we prove 
that observability  also holds for any Lipschitz metric $g$ chosen \suff
close to $g^0$ (in the Lipschitz topology).
It has to be noticed that Lipschitz metrics are too rough to permit the use of microlocal tools
and a direct proof of the observability property.  Even worse for such
a metric the geometric control condition itself does not seem
make sense (as the generating vector field is only $L^\infty$), and we have to use a perturbation argument near the (not so) smooth $\Con^1$ reference metric.

Following the strategy of \cite{Burq},
we argue by contradiction and we prove a 
propagation result for microlocal defect measures in a low
regularity setting. We prove that these measures are solutions to the ODE~\eqref{eq: ODE intro} with
 here $\Hp$ having $\Con^0$-coefficients. Then, we deduce some general properties about their support. Namely we show that their support is a union of integral curves of the vector field. This latter step also follows from Ambrosio and Crippa's superposition principle~\cite
{AmCr14}. Yet,  we give a completely different proof which is of
interest since it can be extended
to the case of a domain with a boundary \cite{B-D-LR1, B-D-LR2}. We
have not been able to extend the approach of \cite{AmCr14} to that
case. To derive the ODE fulfilled by the microlocal defect measure we
heavily rely on some harmonic analysis results due to R.~Coifman and
Y.~Meyer~\cite[Proposition IV.7]{Coifman-Meyer} that express that
the commutator of a pseudo-differential operator of order 1 and a
Lipschitz function is a bounded operator on $L^2$.

Finally, going further in the analysis, we investigate another stability property with respect to perturbations of the metric. We prove that the HUM optimal control associated with a fixed initial
data is {\em not} stable with respect to perturbations of the metric.

\bigskip

\noindent\textbf{Acknowledgements.}
The research of N. Burq was partially supported by Agence Nationale de la
Recherche through projects ANA\'E ANR-13-BS01-0010-03, ISDEEC
ANR-16-CE40-0013, Institut universitaire de France, and the European
Research Council (ERC) under the European Union's Horizon 2020
research and innovation programme (Grant agreement 101097172 -
GEOEDP).  The research of B. Dehman was partially supported by  the Tunisian Ministry for Higher Education and
Scientific Research within the LR-99-ES20 program. Finally, the
authors wish to thank the anonymous reviewer for pointing out
arguments that needed clarifications.

\subsection{Outline}
The article is organized as follows.  In
Section~\ref{Geometric setting} we set up the geometric framework
we shall use and in Section~\ref{Exact controllability and
  observability} we precisely recall the equivalence of observability
and exact controllability for the wave equation.  In Section
\ref{Statement of the results} we state the main results of the
article.

In Section \ref{sec: Geometric facts} we recall some
geometric facts and the notions of pseudo-differential calculus and
microlocal defect (density) measures on a manifold. In addition, using
bicharacteristics we state the geometric control condition of
\cite{BLR:92} in its classical form ($\Con^2$-metric) and generalized
form ($\Con^1$-metric).

In Section~\ref{sec: mdm and pde} we recall what microlocal defect
measures are and we show how, if associated with sequences of solutions
of PDEs, their support can be estimated and how a transport ODE can be
derived, in the particular context of low regularity of coefficients.

Section~\ref{sec proof: th: ODE} is devoted to our proof of the
support propagation for measures solutions of a ODE with
$\Con^0$-coefficients, Theorem~\ref{th: ODE}.

In Section \ref{prooftheoprinc} we use the results of
Section~\ref{sec: mdm and pde} and the propagation result of
Theorem~\ref{th: ODE} to prove the observability and controllability
results for the wave equation, Theorems \ref{theo2} and \ref{theoprinc}.

Finally, in Section~\ref{sec: lack continuity HUM} we prove the
results related to stability properties of the HUM control process.

\subsection{Setting and well-posedness}
\label{Geometric setting}

Throughout the article, we consider $\M$ a $d$-dimensional $\Cinf$-compact manifold, that
is, a manifold without boundary with a topology that makes it
compact equipped with a $\Cinf$-atlas. We assume that the topology is also given by a
Riemannian metric $g$, to be chosen either Lipschitz or of class $\Con^k$ for
some value of $k$ to be made precise below\footnote{Note that despite
  considering $\Con^k$ metrics with $k < \infty$ we still impose the
  underlying manifold to be smooth. This is due to our use of
  pseudo-differential techniques that are simple to introduce on a
  smooth manifold. See Section~\ref{sec: measures symbols operators}}. 

We denote by $\mu_g$ the canonical positive Riemannian density on
$\M$, that is, the density measure associated with the density
function $(\det g)^{1/2}$. We also consider a positive Lipschitz or of class 
 $\Con^{k}$-function
$\k$ and we define the density $ \k \mu_g$.

The $L^2$-inner product and norm are considered with respect to this
density $\k \mu_g$, that is, 
\begin{align}
  \label{eq: L2-inner product and norm}
  \inp{u}{v}_{L^2(\M)} = \int_\M u \bar{v} \, \k \mu_g, 
  \qquad 
  \Norm{u}{L^2(\M)}^2 = \int_\M |u|^2\, \k \mu_g.
\end{align}
We denote by $L^2 V(\M)$ the space of $L^2$-vector fields on $\M$,
equipped with the norm
\begin{align*}
  \Norm{v}{L^2 V(\M)}^2
  = \int_\M  g(v, \bar{v}) \, \k \mu_g, \qquad v \in L^2 V(\M).
\end{align*}
We recall that the Riemannian gradient and divergence are given by 
\begin{align*}
  g (\nablag f, v) = v(f)
  \ \ \text{and} \ \ 
  \int_\M f \divg v \mu_g = - \int_\M v(f) \, \mu_g,
\end{align*}
for $f$ a  function and $v$ a vector field, yielding in local
coordinates
\begin{align*}
  (\nablag f)^i= \sum_{1\leq j \leq d} g^{i j}\d_{x_j} f, 
  \qquad 
  \divg v = (\det g)^{-1/2} 
  \sum_{1\leq i\leq d} \d_{x_i}\big( (\det g)^{1/2} v^i \big),
\end{align*}
with $(g_x^{ij}) = (g_{x,i j})^{-1}$.

We introduce the elliptic operator $\A = \A_{\k,g}= \k^{-1} \divg (\k
\nablag)$, that is, in local coordinates
\begin{align*}
  \A f = \k^{-1} (\det g)^{-1/2} 
  \sum_{1\leq i, j \leq d}  \d_{x_i}\big( 
  \k  (\det g)^{1/2} g^{i j}(x)\d_{x_j} f
  \big).
\end{align*}
Its principal symbol is simply
$a(x,\xi) = - \sum_{1\leq i, j \leq d} g_x^{i j} \xi_i \xi_j$. Note that
for $\k=1$, one has $\A =  \Delta_g$, the Laplace-Beltrami operator
associated with $g$ on $\M$.  Similarly to $\Delta_g$, the operator
$\A$ is unbounded on $L^2(\M)$. With the domain $D(\A) =H^2(\M)$ one
finds that $\A$ is self-adjoint, with respect to the $L^2$-inner product
given in \eqref{eq: L2-inner product and norm}, and
negative. Moreover, one has
\begin{align*}
  \inp{\A u}{v}_{L^2(\M)} 
  = - \int_\M g (\nablag u, \nablag \bar{v}) \,\k \mu_g, 
  \qquad u\in H^2(\M), \ v \in H^1(\M).
\end{align*}
Together with $\A$ we consider the wave operator $ P_{\k,g}
= \d_t^2 - \A_{\k,g}  +m$, with $m >0$ a constant
and the equation
\begin{align}
  \label{eq: wave equation-intro}
  \begin{cases}
     P_{\k,g} y =f 
    & \text{in}\ (0,+\infty)\times\M,\\
      y_{|t=0}  = y^0, \ \d_t y_{|t=0}  = y^1 
      & \text{in}\ \M.
  \end{cases}
\end{align}
It is well-posed in the energy space $H^1(\M) \oplus L^2(\M)$.
\begin{proposition}
  \label{prop: well-posedness wave equation}
  Consider $\k$ and $g$ both of Lipschitz class. 
  Let $(y^0, y^1) \in H^1(\M) \times L^2(\M)$ and let $f \in
  L^2\big(0,T; L^2(\M)\big)$, for any $T>0$.   There exists a unique  
  \begin{align*}
    y \in \Con^0 \big([0,+\infty); H^1(\M)  \big)
    \cap \Con^1 \big([0,+\infty); L^2(\M)\big)
  \end{align*}
  that is a weak solution of \eqref{eq: wave equation-intro}, that is, 
  $ y_{|t=0}  = y^0$ and $\d_t y_{|t=0}  = y^1$ and
  \begin{align*}
    P_{\k,g}  y =f \quad \text{in} \
    \D' \big((0,+\infty) \times \M\big).
  \end{align*}
\end{proposition}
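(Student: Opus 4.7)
The plan is to recast \eqref{eq: wave equation-intro} as a first-order evolution equation on the energy space $\H=H^1(\M)\oplus L^2(\M)$ and apply the Hille--Yosida/Stone theorem for skew-adjoint generators, then recover the forced problem by a Duhamel formula. The Lipschitz regularity of $\k$ and $g$ intervenes only through the self-adjointness of $\A=\A_{\k,g}$ on $L^2(\M)$, which is stated in the excerpt, and through the continuity and coercivity on $H^1(\M)$ of the quadratic form
\begin{align*}
  a(u,v)=\int_\M g(\nablag u,\nablag\bar v)\,\k\mu_g+m\int_\M u\bar v\,\k\mu_g,
\end{align*}
which holds because the coefficients are in $L^\infty$ with $\k$ and $\det g$ bounded below and $m>0$.

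First, I endow $\H$ with the inner product derived from $a$ and the $L^2$-norm on the second slot, and I introduce the operator $\mathbb A:D(\mathbb A)\subset\H\to\H$ defined by $\mathbb A(u,v)=(v,\A u-mu)$ with domain $D(\mathbb A)=H^2(\M)\oplus H^1(\M)$. A direct integration by parts using Green's identity recalled in the excerpt shows $\inp{\mathbb A U}{U}_\H+\inp{U}{\mathbb A U}_\H=0$ for $U\in D(\mathbb A)$, so $\mathbb A$ is skew-symmetric. To upgrade this to skew-adjointness I check the range condition $\mathrm{Ran}(I\pm\mathbb A)=\H$: given $(F,G)\in\H$ one solves the elliptic problem $u-\A u+mu=F\mp G\in L^2(\M)$ by Lax--Milgram applied to $a+\inp{\cdot}{\cdot}_{L^2}$, then sets $v=\pm(u-F)$; since $\A$ is self-adjoint with domain $H^2(\M)$, the variational solution $u$ belongs to $H^2(\M)$, so $(u,v)\in D(\mathbb A)$. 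Stone's theorem then yields a strongly continuous unitary group $(e^{t\mathbb A})_{t\in\R}$ on $\H$.

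Second, for initial data $(y^0,y^1)\in\H$ and $f\in L^2_\loc\big([0,\infty);L^2(\M)\big)$, I define
\begin{align*}
  Y(t)=e^{t\mathbb A}(y^0,y^1)+\int_0^t e^{(t-s)\mathbb A}(0,f(s))\,\di s,
\end{align*}
which is the unique mild solution in $\Con^0\big([0,\infty);\H\big)$; its first component $y$ lies in $\Con^0\big([0,\infty);H^1(\M)\big)\cap\Con^1\big([0,\infty);L^2(\M)\big)$ and satisfies $P_{\k,g}y=f$ in $\D'\big((0,\infty)\times\M\big)$, as one verifies by pairing with test functions and using the semigroup identity together with the definition of $\mathbb A$. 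Uniqueness follows either from the injectivity of $e^{t\mathbb A}$ or, at the PDE level, from the usual energy estimate: if $y$ solves the homogeneous problem with zero data, then differentiating $E(t)=\tfrac12\Norm{\d_t y(t)}{L^2}^2+\tfrac12 a(y(t),y(t))$ gives $E'(t)=0$ by the same integration-by-parts identity, so $E\equiv 0$ and $y\equiv 0$.

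The only genuinely delicate point is the range condition, and with it the identification of $D(\A)$ with $H^2(\M)$ under the Lipschitz hypothesis; this is precisely the content of the self-adjointness statement that the excerpt takes as input, so the well-posedness follows without further difficulty. Everything else is standard semigroup machinery and does not require any smoothness of $\k,g$ beyond $W^{1,\infty}$.
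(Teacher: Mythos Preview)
The paper does not actually prove this proposition: immediately after its statement there is only a remark that at this level of regularity the well-posedness is classical, with references to \cite{Colombini-Del Santo} and \cite{Colombini-DelSanto-Fanelli-Metivier} for rougher coefficients. So there is no proof in the paper to compare yours against.

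Your argument is the standard one and is correct. The reduction to a skew-adjoint generator on $\H=H^1(\M)\oplus L^2(\M)$, the verification of the range condition via Lax--Milgram, the appeal to Stone's theorem, and the Duhamel formula for the forced problem are exactly how one establishes this result. You are also right to flag that the only nontrivial input is the identification $D(\A)=H^2(\M)$ for Lipschitz $\k,g$; the paper explicitly asserts this, so you are entitled to use it. One small point you leave implicit: the passage from the mild solution $Y\in\Con^0([0,\infty);\H)$ to the claimed regularity $y\in\Con^1([0,\infty);L^2(\M))$ for the first component, in particular for the Duhamel integral with $f\in L^2_\loc$, requires a short density argument (approximate by smooth data and sources, use that the group is unitary). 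This is routine, but worth a sentence if you want the proof to be self-contained.
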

\begin{remark}
  At this level of regularity of $\k$ and $g$, the well-posedness of
  the wave equation is classical. For less regular
  coefficients we refer to \cite{Colombini-Del Santo} and
  \cite{Colombini-DelSanto-Fanelli-Metivier}.
\end{remark}
In what follows, for simplicity we shall consider the case $m=1$, that
is for 
\begin{align*}
  P_{\k,g} = \d_t^2 - \A_{\k,g}  +1.
\end{align*}
In this case, we denote by 
\begin{align*}
  \E_{\k, g} (y) (t)
  &= \frac12 \big( \Norm{y(t)}{H^1(\M)}^2
    + \Norm{\d_ty(t)}{L^2(\M)}^2 \big)\\
  &= \frac12 \big( \Norm{y(t)}{L^2(\M)}^2
    + \Norm{\nablag y(t)}{L^2 V(\M)}^2
    + \Norm{\d_ty(t)}{L^2(\M)}^2
    \big),
\end{align*}
the energy of this solution at time $t$.
For a weak solution $y$ of \eqref{eq: wave equation-intro}, if $f=0$ this energy
is independent of time $t$, that is, 
\begin{align*}
  \E_{\k, g} (y) (t) = \E_{\k, g} (y) (0) = \frac12 \big( \Norm{y^0}{H^1(\M)}^2
  + \Norm{y^1}{L^2(\M)}^2 \big).
\end{align*}

\begin{remark}
  The equation we consider, with the constant $m>0$, is often referred
  to the Klein-Gordon equation. Here, we keep the name wave
  equation. We choose this equation instead of the classical wave
  equation that corresponds to the case $m=0$.  In fact, on a compact
  manifold without boundary, constants are eigenfunctions of the
  elliptic operator $\A_{\k,g}$ with $0$ as an eigenvalue. Hence,
  constant functions are solutions to the wave equation and are
  so-called {\em invisible} solutions, as far as the observability
  property we are interested in is concerned. If one
  considers a manifold with boundary and say, homogeneous Dirichlet
  conditions, this issue becomes irrelevant. We could have dealt with the case $m=0$ (the usual wave equation) at the cost of additional technical complications.
\end{remark}

\subsection{Exact controllability and observability}
\label{Exact controllability and observability}

Let $\omega$ be a nonempty open subset of $\M$ and $T>0$.  The notion of exact
controllability for the wave equation from $\omega$ at time
$T$ is stated as follows.
\begin{definition}[exact controllability in $H^1(\M) \oplus L^2(\M)$]
  \label{def: exact controllability}
  
  One says that the wave equation is exactly controllable from
  $\omega$ at time $T>0$ if
  for any
  $(y^0, y^1) \in H^1(\M) \times L^2(\M)$, there exists  
  $f\in L^2((0,T)\times \M)$ such that the weak solution $y$ to
  \begin{equation}
    \label{eq.controlint}
    P_{\k,g} y =  \bld{1}_{(0,T) \times \omega} \, f, \quad (y_{|t=0}, \d_t y_{|t=0}) = (y^0, y^1),
  \end{equation}
  as given by Proposition~\ref{prop: well-posedness wave equation}
  satisfies $(y, \d_t y)_{|t= T} = (0,0)$.
 The function $f$ is called the control function or simply the
 control. 
\end{definition}
Observability of the wave equation from the open set $\omega$
in time $T$ is the following notion.
\begin{definition}[observability]
  \label{def: observability}
  One says that the wave equation is observable from
  $\omega$ at time $T$ if there exists $\Cobs>0$ such that 
  for any
  $(u^0, u^1) \in H^1(\M) \times L^2(\M)$ one has 
  \begin{equation}
    \label{eq: observability}
    \E_{\k, g} (u) (0)
    \leq \Cobs
    \Norm{\bld{1}_{(0,T) \times \omega}\, \d_t u}{L^2(\L)}^2,
  \end{equation}
  for $u \in \Con^0 \big([0,T]; H^1(\M)  \big)
    \cap \Con^1 \big([0,T]; L^2(\M)\big)$ the weak solution of $P_{\k,g}  u=0$
    with $ u_{|t=0}  = u^0$ and $\d_t u_{|t=0}  = u^1$ as given by Proposition~\ref{prop: well-posedness wave equation}; see~\cite{Lions}.
  \end{definition}
\begin{proposition}
  \label{prop: equiv controllability observability}
  Let $\omega$ be an open subset of $\M$ and $T>0$.
  The wave equation is exactly controllable from
  $\omega$ at time $T$ if and only if it is observable from
  $\omega$ at time $T$.
\end{proposition}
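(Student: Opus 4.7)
The plan is to follow the Hilbert Uniqueness Method (HUM) of J.-L.~Lions, which recasts exact controllability as an equivalent observability estimate via duality. The whole argument rests on a single integration by parts identity followed by bookkeeping between norms and dualities.

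First I would reduce controllability to a surjectivity statement. By linearity and Proposition~\ref{prop: well-posedness wave equation}, the controlled solution in Definition~\ref{def: exact controllability} decomposes as the sum of the free solution with data $(y^0, y^1)$ (which depends bijectively on $(y^0, y^1)$ since the free flow is a bounded isomorphism of $\H := H^1(\M) \times L^2(\M)$) and the source solution $y_f$, with zero initial data and source $\bld{1}_\omega f$. Hence exact controllability at time $T$ is equivalent to the surjectivity of the bounded linear map
\begin{align*}
\Lambda : L^2((0,T) \times \omega) \to \H,\qquad \Lambda(f) = (y_f(T), \d_t y_f(T)),
\end{align*}
and, by the closed range theorem, to its adjoint $\Lambda^*$ being bounded below.

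The main step is to identify $\Lambda^*$. I would pair $P_{\k, g} y_f = \bld{1}_\omega f$ against the conjugate of an auxiliary solution $\varphi$ of $P_{\k, g} \varphi = 0$ (the operator is formally self-adjoint on $L^2(\M, \k \mu_g)$), integrate by parts in time, use the self-adjointness of $\A_{\k, g}$ in space, and use the vanishing of $y_f$ and $\d_t y_f$ at $t = 0$, so as to arrive at
\begin{align*}
\int_0^T \!\! \int_\omega f \, \overline{\varphi} \, \k \mu_g \, dt = \bigl( \d_t y_f(T), \overline{\varphi(T)} \bigr)_{L^2(\M)} - \dup{y_f(T)}{\overline{\d_t \varphi(T)}}_{H^1, H^{-1}}.
\end{align*}
Through the identification $\H' \cong H^{-1}(\M) \times L^2(\M)$ via the $L^2$-pivot, this identity says that $\Lambda^*$ sends (up to sign and complex conjugation) the element of $\H'$ corresponding to the final data of $\varphi$ to $\bld{1}_\omega \overline{\varphi}$. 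Its boundedness from below is therefore equivalent to the dual observability
\begin{align*}
\Norm{(\varphi(T), \d_t \varphi(T))}{L^2(\M) \times H^{-1}(\M)}^2 \leq C \Norm{\bld{1}_\omega \varphi}{L^2((0,T)\times\omega)}^2
\end{align*}
for every $\varphi$ with $P_{\k, g} \varphi = 0$.

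Finally I would convert this dual inequality into \eqref{eq: observability}. If $u$ solves $P_{\k, g} u = 0$ with $(u^0, u^1) \in \H$, then $v := \d_t u$ is still a homogeneous solution, now with $(v(0), \d_t v(0)) = (u^1, \A_{\k, g} u^0 - u^0) \in L^2(\M) \times H^{-1}(\M)$. Since $\A_{\k, g} - 1$ is an isomorphism $H^1(\M) \to H^{-1}(\M)$, this yields the norm equivalence $\Norm{(u^0, u^1)}{H^1 \times L^2} \sim \Norm{(v(0), \d_t v(0))}{L^2 \times H^{-1}}$, and time reversibility transports final data to equivalent initial data, so the dual inequality is equivalent to \eqref{eq: observability}. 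The only genuinely computational step is the duality identity above — everything else is organizational — and no serious analytic obstacle appears, since the well-posedness estimates of Proposition~\ref{prop: well-posedness wave equation} allow one to justify the integration by parts by a straightforward density argument from smooth data.
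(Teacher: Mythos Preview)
Your argument is the standard HUM duality and is correct. Note, however, that the paper does not actually supply a proof of this proposition: it is stated as a classical fact with an implicit reference to \cite{Lions}, so there is no ``paper's own proof'' to compare against beyond the observation that your write-up is precisely the Lions argument the citation points to.

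One small point worth tightening in your final step: when you assert that $\A_{\k,g}-1$ is an isomorphism $H^1(\M)\to H^{-1}(\M)$, this is where the choice $m=1>0$ in the paper's setting is used (so that $-\A_{\k,g}+1$ is coercive); you should make that explicit, since for $m=0$ the constant functions would obstruct the equivalence of norms you invoke, in line with the remark following the proposition.
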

\begin{remark}
  In the case $m=0$ the
  energy function is given by
  \begin{align*}
\E_{\k, g} (u) (t)= \frac12 \big( \Norm{\d_tu(t)}{L^2(\M)}^2
  + \Norm{\nablag u(t)}{L^2 V(\M)}^2
  \big).
  \end{align*}
  It follows that a constant function $u$, a solution to the wave
  equation $(\d_t^2 - \A)u=0$ has zero energy. Since
  $\Norm{\bld{1}_{(0,T) \times \omega}\, \d_t u}{L^2(\L)}^2$ also
  vanishes, one sees that such a solution are invisible for an
  observability inequality of the form of \eqref{eq:
    observability}. Possibilities to overcome this difficulty are to work
  in a quotient space or to change the wave operator into the
  Klein-Gordon operator. Here, we chose for simplicity the latter option.
\end{remark}

\subsection{Main results} 
\label{Statement of the results}

We introduce the following spaces for the coefficients $(\k,g)$ to
distinguish various levels of regularity:
\begin{align*}
  &\X^2(\M)= \{ (\k,g);\ \k\in  \Con^2(\M) \ \text{and} \  \ g \ \text{is a}\
    \Con^2\text{-metric on}\ \M \}, \\
  &\X^1(\M)= \{ (\k,g);\ \k\in  \Con^1(\M)  \ \text{and} \  \ g \ \text{is a}\
    \Con^1\text{-metric on}\ \M \}, \\
  &\Y(\M)= \{ (\k,g);\ \k\in  W^{1, \infty} (\M)  \ \text{and} \ \ g \ \text{is a}\
    W^{1, \infty}\text{-metric on}\ \M \}.
\end{align*}
We start by  recalling the controllability result known for
regularity higher than or equal to $\Con^2$, under the Rauch-Taylor
geometric control condition.
\begin{definition}[Rauch-Taylor, geometric control condition]
  \label{def: Geometric control condition-thm}
  Let $g$ be a $\Con^k$ metric, $k=1$ or $2$, and let $\omega$ be an open set of $\M$ and
  $T>0$. One says that $(\omega, T)$ fulfills the geometric control
  condition if all maximal geodesics associated with $g$, traveled at
  speed 1, encounter $\omega$ for some time $t \in (0,T)$.
\end{definition}
\setcounter{sectionbiss}{\arabic{section}}
\setcounter{definitionbiss}{\arabic{theorem}}
\setcounter{sectionterr}{\arabic{section}}
\setcounter{definitionterr}{\arabic{theorem}}
A second formulation of this geometric  condition based on the dual notion of
\bichars is given in Section~\ref{sec: Geometric control condition}
below.

\begin{theorem}[Exact controllability  --
  $\Con^2$-regularity]
  \label{theorem: Burq 97}
  Consider $(\k, g) \in \X^2(\M)$, $\omega$ an open subset of $\M$ and
  $T>0$ such that $(\omega, T)$ fulfills the geometric control
  condition of Definition~\ref{def: Geometric control
    condition-thm}. Then, the wave equation is exactly
  controllable from $\omega$ at time $T$.
\end{theorem}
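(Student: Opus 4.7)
By Proposition~\ref{prop: equiv controllability observability}, it suffices to establish the observability inequality~\eqref{eq: observability}. The plan is to argue by contradiction along the now classical microlocal defect measure strategy (as used in \cite{BurqGerard,Burq}), relying essentially on the $\Con^2$ regularity only to ensure that the Hamiltonian vector field $\Hp$ is $\Con^1$, so that its flow is well defined and the standard propagation theorem for microlocal defect measures applies.

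Assume observability fails. Then there exists a sequence $(u_n^0,u_n^1)\in H^1(\M)\times L^2(\M)$ of Cauchy data for which the corresponding solutions $u_n$ of $P_{\k,g} u_n=0$ satisfy
\begin{align*}
\E_{\k,g}(u_n)(0)=1,\qquad \Norm{\bld{1}_{(0,T)\times\omega}\,\d_t u_n}{L^2(\L)}\longrightarrow 0.
\end{align*}
Write $v_n=\d_t u_n$, which is bounded in $L^2_{\loc}((-\eps,T+\eps)\times\M)$ and satisfies $P_{\k,g} v_n=0$. Up to extraction, one may associate to $(v_n)$ a microlocal defect measure $\mu$ on $T^*\bigl((-\eps,T+\eps)\times\M\bigr)\setminus 0$ in the sense to be recalled in Section~\ref{sec: mdm and pde}. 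The first step is to show that $\mu$ is nontrivial: using that $\E_{\k,g}(u_n)(0)=1$ together with an elliptic gain (using $P_{\k,g}v_n=0$ to trade time derivatives for space derivatives) and the standard relation between mass of the measure and weak $L^2$ limits of quadratic expressions, one concludes that $\mu$ does not vanish on some time slice, hence $\mu\neq 0$.

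The second step is the propagation property. Since $(v_n)$ satisfies $P_{\k,g}v_n=0$ with principal symbol $p(t,x,\tau,\xi)=-\tau^2-a(x,\xi)$ of class $\Con^1$ on $T^*\bigl((-\eps,T+\eps)\times\M\bigr)$, the measure $\mu$ is supported on the characteristic set $\{p=0\}$ and the commutator computation
\begin{align*}
0=\inp{[P_{\k,g},B]v_n}{v_n}_{L^2}\longrightarrow \dup{\mu}{\{p,b\}}
\end{align*}
for any symbol $b$ of order $0$ with compactly supported principal symbol yields $\transp{\Hp}\mu=0$ in $\D'$. In the $\Con^2$ setting, $\Hp$ is a $\Con^1$ vector field, its flow $\phi_s$ exists and is unique, and this distributional equation is equivalent to the invariance of $\mu$ under $\phi_s$; equivalently, the support of $\mu$ is a union of maximal bicharacteristic curves.

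The third step uses the hypothesis. The convergence $\bld{1}_{(0,T)\times\omega}\,v_n\to 0$ in $L^2$ yields that the measure $\mu$ vanishes on the open cone $T^*\bigl((0,T)\times\omega\bigr)\setminus 0$. The geometric control condition of Definition~\ref{def: Geometric control condition-thm} translates in phase space into: every bicharacteristic of $p$ issued from a point above $\{t=0\}$ meets the cone $T^*\bigl((0,T)\times\omega\bigr)$. Combined with the invariance of $\supp\mu$ under the flow, this forces $\mu$ to vanish above $\{0\}\times\M$, and then by propagation everywhere. This contradicts the nontriviality obtained above.

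The main technical point, already in the classical proof, is the passage from $\transp{\Hp}\mu=0$ to the invariance of $\supp\mu$ along bicharacteristics, which requires the uniqueness of integral curves of $\Hp$ and is the reason why $\Con^2$ regularity is needed. It is precisely this step that Theorem~\ref{th: ODE} of the present paper aims to replace when only $\Con^1$ regularity is available; here we invoke the classical flow.
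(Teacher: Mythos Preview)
The paper does not give an independent proof of Theorem~\ref{theorem: Burq 97}; it is quoted as a known result from \cite{Burq}, and the paper's own argument in Section~\ref{prooftheoprinc} (for the more general Theorem~\ref{theoprinc}) specializes to it. Your sketch follows exactly that strategy, with one genuine ingredient missing: the Bardos--Lebeau--Rauch compactness--uniqueness reduction.

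In your contradiction argument you normalize $\E_{\k,g}(u_n)(0)=1$, let the observation term tend to $0$, and then directly associate a defect measure to $v_n=\d_t u_n$. But the microlocal defect measure formalism used here (see \eqref{eq: exists mdm mu 0}--\eqref{eq: exists mdm mu 1}) requires the sequence to converge weakly to $0$, which you have not secured: a priori a subsequence of $(u_n^0,u_n^1)$ converges weakly in $H^1\oplus L^2$ to some $(u^0,u^1)$, and the corresponding solution $u$ satisfies $P_{\k,g}u=0$ with $\d_t u=0$ on $(0,T)\times\omega$, which is not obviously zero without a unique-continuation argument. The paper handles this by first reducing \eqref{equ: comp-obs} to the relaxed estimate \eqref{equ: compact obs} carrying the extra compact term $\bigNorm{(u(0),\d_t u(0))}{L^2\oplus H^{-1}}$; in the contradiction argument this term is also sent to $0$, which forces $(u_n^0,u_n^1)\to 0$ in $L^2\oplus H^{-1}$ and hence $u_n\rightharpoonup 0$ in $H^1_{\loc}(\L)$. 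The passage from \eqref{equ: compact obs} back to \eqref{equ: comp-obs} is where unique continuation is actually invoked. Without this step (or an explicit treatment of the nonzero weak limit), your argument has a gap right at the first appearance of $\mu$.

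Aside from this, the structure matches the paper: $\supp\mu\subset\Char(p)$, $\transp{\Hp}\mu=0$, flow invariance of $\supp\mu$ (here via the $\Con^1$ regularity of $\Hp$, whereas the paper uses Theorem~\ref{th: ODE} in the general case), vanishing of $\mu$ over $(0,T)\times\omega$, and the geometric control condition to force $\mu=0$, contradicting Lemma~\ref{lemma: non vanishing measure}. A minor stylistic difference: the paper works with the $H^1$-defect measure of $(u_n)$ itself rather than the $L^2$-defect measure of $(\d_t u_n)$, but the two are equivalent here.
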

This result was first proven by Rauch and Taylor \cite{Rauch-Taylor}
for a smooth metric. The case $(\k, g) \in \X^2(\M)$ was proven by
the first author in \cite{Burq}. On smooth open sets of $\R^d$, or
equivalently on manifolds with boundary equipped with smooth $(\k, g)$,
for instance in the case of homogeneous Dirichlet boundary conditions, this result
is given in the celebrated articles of Bardos, Lebeau and Rauch \cite{BLR:87,BLR:92}.

\medskip In the present article, we extend the result of
Theorem~\ref{theorem: Burq 97} to cases of rougher coefficients. Our extension
is twofold: we treat the case $(\k,g) \in \X^1(\M)$ and, we treat small
perturbations in $\Y(\M)$ of some $(\k,g) \in \X^1(\M)$. Most importantly, these two
results rely on the understanding of the structure of the support of a
nonnegative measure subject to a homogeneous transport equation with
continuous coefficients. 

\subsubsection{Transport equation and measure support}

Let $\O$ be an open set of a smooth manifold. 
We denote by $^1\D'(\O)$ and $^1\D^{\prime,0}(\O)$ the spaces
of density  distributions and density Radon measures on
$\O$. 

Consider a continuous vector field $X$ on $\O$ and let $\mu$ be a
nonnegative measure density on~$\O$. Assume that $\mu$ is such
that $\transp{X} \mu=0$ in the sense of distributions, that is,
\begin{equation}
  \label{EDO}  
  \dup{\transp{X} \mu}{a}_{^1\D'(\O), \Cinf_c(\O)}
  = \dup{\mu}{X a}_{^1\D^{\prime,0}(\O), \Con^0_c(\O)}=0, 
  \qquad a \in \Cinfc(\O).
\end{equation}
If $X$ is moreover Lipschitz, one concludes that $\mu$ is invariant
along the flow that $X$ generates. However, if $X$ is not Lipschitz,
there is no such flow in general. Yet, integral curves do exist by the
Cauchy-Peano theorem. The following theorem provides a structure of the
support of $\mu$. 
\begin{theorem}
\label{th: ODE}
Let $X$ be a continuous vector field on $\O$ and $\mu$ be a
nonnegative density measure on $\O$ that is a solution to $\transp{X} \mu=0$
in the sense of distributions.  Then, the support of $\mu$
is a union of maximally extended integral curves of the vector field
$X$.
\end{theorem}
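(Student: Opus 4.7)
The plan is to approach this locally through an approximation by smooth vector fields. Fix $x_0 \in \supp\mu$. If $X(x_0)=0$ the constant curve is an integral curve and we are done, so assume $X(x_0)\neq 0$. Working in a coordinate chart around $x_0$, it suffices to produce, for each such $x_0$, an integral curve of $X$ through $x_0$ lying in $\supp\mu$ on some short interval $(-\delta,\delta)$; a Zorn / concatenation argument (using closedness of $\supp\mu$ and Peano's theorem at each extension point) then upgrades this local statement to a maximally extended curve in $\supp\mu$.

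In the chart, I would mollify: set $X_\eps := X\ast\rho_\eps$, which is smooth with $X_\eps\to X$ uniformly on compacts, and let $\phi^\eps_t$ be the smooth flow of $X_\eps$. The approximate curves $\gamma_\eps(t) := \phi^\eps_t(x_0)$ are equi-Lipschitz (constant $\|X\|_\infty$), and Arzelà-Ascoli extracts a subsequential uniform limit $\gamma$, which is an integral curve of $X$ through $x_0$ by uniform convergence of $X_\eps\to X$. The heart of the argument is to show $\gamma(t)\in\supp\mu$ for every $t\in(-\delta,\delta)$. For this, I would feed into the distributional identity $\dup{\mu}{X\psi}=0$ the flow-transported cut-off
$$
\psi^\eps_t(y) := \chi(\phi^\eps_{-t}(y)),\qquad \chi\in\Cinfc,\ \chi\geq 0,\ \chi(x_0)>0,
$$
and, using the flow identity $(D\phi^\eps_{-t})X_\eps = X_\eps\circ\phi^\eps_{-t}$ together with $X=X_\eps+(X-X_\eps)$, rewrite it as
$$
\frac{d}{dt}\int\chi\,d(\phi^\eps_{-t})_*\mu = \int (X-X_\eps)\cdot\nabla\psi^\eps_t\,d\mu.
$$
If the right-hand side can be shown to vanish in the limit $\eps\to 0$, integrating in $t$ yields $(\phi^\eps_{-t})_*\mu \rightharpoonup \mu$ weakly along the subsequence. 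Combining this with $x_0\in\supp\mu$ and a Portmanteau-type argument applied along $\gamma$, one concludes that every neighborhood of $\gamma(t)$ receives positive $\mu$-mass, whence $\gamma(t)\in\supp\mu$.

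The main obstacle is controlling the Jacobian $\|D\phi^\eps_{-t}\|_\infty$ entering $\nabla\psi^\eps_t$ uniformly in $\eps$: since $X$ is only continuous, $\|DX_\eps\|_\infty$ typically blows up like $\omega_X(\eps)/\eps$, so the naive Grönwall bound $\exp(|t|\|DX_\eps\|_\infty)$ is useless, and the same issue reappears when one tries to guarantee that the open set $\phi^\eps_{-t}(U)$, around any neighborhood $U$ of $\gamma_\eps(t)$, contains a neighborhood of $x_0$ with $\mu$-mass bounded away from zero. The expected way out, in the spirit of the Coifman-Meyer commutator estimates invoked elsewhere in the paper, is to recast the error term above as a commutator $[X,\rho_\eps\ast]\mu$ applied to a \emph{fixed} test function rather than as a pointwise bound on derivatives of the mollified flow, and possibly to chain the argument over short time-steps of length coupled to $\eps$. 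The whole scheme relies crucially on $\mu$ satisfying $\transp{X}\mu=0$ exactly, so that the residual is of commutator type rather than a generic small perturbation.
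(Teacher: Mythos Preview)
Your proposal has a genuine gap that you yourself identify but do not close. The identity
\[
\frac{d}{dt}\int\chi\,d(\phi^\eps_{-t})_*\mu = \int (X-X_\eps)\cdot\nabla\psi^\eps_t\,d\mu
\]
is correct, but the right-hand side contains $\nabla\psi^\eps_t = (D\phi^\eps_{-t})^T(\nabla\chi)\circ\phi^\eps_{-t}$, and for a merely continuous $X$ there is no uniform control on $\|D\phi^\eps_{-t}\|_\infty$. Your suggested fix via Coifman--Meyer does not apply: those estimates concern commutators of pseudo-differential operators with \emph{Lipschitz} multipliers, and the paper invokes them only where the metric is $W^{1,\infty}$. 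Here $X$ is only $\Con^0$, and the DiPerna--Lions commutator $[\rho_\eps\ast{},X]$ acting on $\mu$ likewise requires at least $W^{1,1}$ or BV regularity of $X$ to vanish in the limit. A mollification route \emph{can} be made to work, but it is the Ambrosio--Crippa superposition principle, which the paper explicitly cites as an alternative; that argument is substantially deeper than what you sketch and does not proceed by the naive pushforward you describe. As written, your argument stalls at exactly the point you flag.

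The paper's proof avoids the Jacobian issue entirely by never introducing a flow. It proceeds in two steps. First, a positivity argument in the style of Melrose--Sj\"ostrand: for $x^0\in\supp\mu$ with $X(x^0)\neq 0$, one designs an explicit test function $q_{\eps,\delta,x^0}$ (a product of a cutoff in a parabolic region aligned with $X(x^0)$ and a cutoff in the transverse direction) such that $Xq = g+h$ with $g\geq 0$, $g(x^0)>0$, and $\supp h\subset B(x^0+\delta X(x^0),\eps\delta)$. Since $\dup{\mu}{Xq}=0$ and $\dup{\mu}{g}>0$, one gets $\dup{\mu}{h}\neq 0$, hence $\supp\mu$ meets $B(x^0+\delta X(x^0),\eps\delta)$. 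The only use of continuity of $X$ is to make $X(x)-X(x^0)$ small on the support of $q$, which controls the sign of $g$; no derivative of $X$ ever appears. Second, this discrete propagation property is shown (Proposition~4.1) to be equivalent, for a closed set, to being a union of maximal integral curves: one iterates to build a polygonal curve through points of $\supp\mu$ and passes to the limit via Arzel\`a--Ascoli, exactly as in the Cauchy--Peano existence proof. The construction of the integral curve thus happens \emph{inside} $\supp\mu$ from the start, rather than being built from an approximate flow and then argued to lie in the support.
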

In other words, if $m^0\in \O$ is in  $\supp (\mu)$, then there exist an
 interval $I$ in $\R$ with  $0\in I$  and a $\Con^1$ curve $\gamma: I
 \to  \O$ that cannot be extended such that $\gamma(0) = m^0$ and 
\begin{equation*}
  \frac{d }{ds} \gamma(s) = X( \gamma (s)), \qquad s \in I,
\end{equation*} 
and 
$\gamma(I) \subset \supp(\mu)$.

Theorem~\ref{th: ODE} can actually be obtained as a consequence of the
superposition principle of L.~Ambrosio and G.~Crippa
\cite[Theorem~3.4]{AmCr14}. Here, we provide an alternative proof that
is of interest as it allows one to extend this measure support
structure result to the case of an open set or a manifold with
boundary \cite{B-D-LR2} as needed for our application to observability
and controllability. Ambrosio and Crippa's proof is based on a
smoothing-by-convolution argument. Extending this approach does not
seem to be straightforward in the context of a boundary.

Theorem~\ref{th: ODE} is proven in Section~\ref{sec proof: th: ODE}
and its proof is independent of the other sections of the article. A
reader only interested in our proof of Theorem~\ref{th: ODE} may thus
head to Section~\ref{sec proof: th: ODE} directly.

\subsubsection{Exact controllability results}

If $(\k,g) \in \X^2(\M)$, $x \in \M$ and $v \in T_x\M$ there is a unique
geodesic originating from $x$ in direction $v$. In the case
$(\k,g) \in \X^1(\M)$ uniqueness is {\em lost}.  Existence holds however
and maximal (here global, see below) geodesics can still be defined by the Cauchy-Peano theorem. In particular, the geometric
control condition of Definition~\ref{def: Geometric control
  condition-thm} still makes sense. As announced above, our first
result is the following theorem.
\begin{theorem}[Exact controllability  --  $\Con^1$-regularity]
  \label{theo2}
  Consider $ (\k, g) \in \X^1(\M)$, $\omega$ an open subset of $\M$ and
  $T>0$ such that $(\omega, T)$ fulfills the geometric control
  condition of Definition~\ref{def: Geometric control
    condition-thm}.  Then, the wave equation is exactly controllable
  from $\omega$ at time $T$.
\end{theorem}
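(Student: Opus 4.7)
By Proposition~\ref{prop: equiv controllability observability}, it suffices to establish the observability inequality \eqref{eq: observability}. The plan is a contradiction argument based on microlocal defect measures (MDMs) and the support-propagation result of Theorem~\ref{th: ODE}. Assuming \eqref{eq: observability} fails, one obtains a sequence of solutions $(u_n)$ to $P_{\k,g}u_n=0$ with $\E_{\k,g}(u_n)(0)=1$ and $\Norm{\bld{1}_{(0,T)\times\omega}\,\d_t u_n}{L^2}\to 0$. I would first reduce to the case $(u_n^0,u_n^1)\rightharpoonup 0$ in $H^1(\M)\oplus L^2(\M)$ by absorbing any strongly converging part: its weak limit $u$ would solve $P_{\k,g}u=0$ with $\d_t u\equiv 0$ on $(0,T)\times\omega$, and a standard eigenfunction-expansion together with a unique-continuation/finite-dimensionality argument (available at $\Con^1$ regularity and exploiting that $m=1$, so that $-\A+1$ is injective) would force $u\equiv 0$.

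Next I would extract, following Section~\ref{sec: mdm and pde}, a microlocal defect density measure $\mu$ on $T^\ast\big((0,T)\times\M\big)\setminus 0$ from the sequence $(u_n)$. Two properties will be immediate. First, $\mu\neq 0$: since $(u_n^0,u_n^1)$ tends to zero weakly in $H^1\oplus L^2$, it tends to zero strongly in every lower-order Sobolev space, so the full normalized energy $1$ must escape to high frequencies and is captured by $\mu$. Second, $\supp\mu$ sits in the characteristic set $\{p=0\}$ of $P_{\k,g}$ while its projection onto $(0,T)\times\M$ avoids $(0,T)\times\omega$, as a direct consequence of the vanishing observation.

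The core step is to derive the transport equation
\begin{align*}
\transp{\Hp}\mu = 0
\end{align*}
on $T^\ast\big((0,T)\times\M\big)\setminus 0$, where $\Hp$ is the Hamiltonian field of $p$. The natural route is to test $P_{\k,g}u_n=0$ against $\Op(a)u_n$ for a test symbol $a$ and pass to the limit in the resulting commutator identity, using that the principal symbol of $\tfrac{1}{i}[P_{\k,g},\Op(a)]$ is $\Hp a$. The main analytical obstacle is located exactly here: since $g$ is only $\Con^1$, the symbol $p$ is only $\Con^1$ in $x$, the field $\Hp$ has merely continuous coefficients, and $[P_{\k,g},\Op(a)]$ is not automatically a pseudo-differential operator of the expected order. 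The remedy will be the Coifman--Meyer commutator estimate \cite[Proposition IV.7]{Coifman-Meyer} invoked in the introduction, which guarantees that the commutator of a Lipschitz (hence $\Con^1$) multiplier with a first-order pseudo-differential operator is bounded on $L^2$. This uniform bound will let me pass to the limit and identify $\dup{\mu}{\Hp a}=0$ for every admissible $a$.

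With the transport equation at hand, Theorem~\ref{th: ODE} applied to the continuous vector field $\Hp$ on the characteristic set asserts that $\supp\mu$ is a union of maximal integral curves of $\Hp$. Each such curve projects onto a maximal geodesic of the $\Con^1$-metric $g$ on $\M$ (up to the standard reparametrization coming from the $\d_t$-component of $p$). Picking any $(t^\ast,x^\ast,\tau^\ast,\xi^\ast)\in\supp\mu$, the associated maximal geodesic passes through $x^\ast$ at time $t^\ast\in(0,T)$ and, by the geometric control condition of Definition~\ref{def: Geometric control condition-thm}, must meet $\omega$ at some time in $(0,T)$. The corresponding point on the bicharacteristic would then lie above $(0,T)\times\omega$ while remaining in $\supp\mu$, contradicting the disjointness established above. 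This contradiction proves \eqref{eq: observability}, and hence Theorem~\ref{theo2} via Proposition~\ref{prop: equiv controllability observability}.
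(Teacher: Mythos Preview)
Your proposal is correct and follows essentially the same route as the paper. The paper obtains Theorem~\ref{theo2} as the special case $(\k_k,g_k)\equiv(\k,g)$ of Theorem~\ref{theoprinc}, whose proof in Section~\ref{prooftheoprinc} is exactly the contradiction/MDM argument you outline: Proposition~\ref{prop-int} gives $\supp\mu\subset\Char(p^0)$ and $\transp{\Hpref}\mu=0$ (the Coifman--Meyer commutator bound being the key analytic input, via Corollary~\ref{cor: peu-reg} and Proposition~\ref{prop: peu-reg-2}), Theorem~\ref{th: ODE} then forces $\supp\mu$ to be a union of maximal bicharacteristics, and the geometric control condition yields the contradiction with $\mu\neq 0$ (Lemma~\ref{lemma: non vanishing measure}) and $\mu=0$ over $(0,T)\times\omega$.

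The only organizational difference is in how the low-frequency/weak-limit part is handled. The paper invokes the Bardos--Lebeau--Rauch uniqueness--compactness reduction as a black box, replacing \eqref{eq: observability} by the relaxed inequality~\eqref{equ: compact obs} with the compact remainder $\Norm{(u(0),\d_t u(0))}{L^2\oplus H^{-1}}^2$; the contradiction sequence then automatically satisfies $(u^{k,0},u^{k,1})\to 0$ in $L^2\oplus H^{-1}$, hence weakly in $H^1\oplus L^2$. You instead subtract the weak limit directly and kill it via elliptic unique continuation for $-\A_{\k,g}+1$ (valid for Lipschitz, hence $\Con^1$, coefficients) combined with the spectral expansion. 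These are two standard, equivalent implementations of the same step.
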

A second result is the following {\em perturbation} result.
\begin{theorem}[Exact controllability  -- Lipschitz perturbation]
  \label{theoprinc}
  Let $(\k^0, g^0) \in \X^1(\M)$, $\omega$ be an open subset of $\M$ and
  $T>0$ be such that $(\omega, T)$ fulfills the geometric control
  condition of Definition~\ref{def: Geometric control
    condition-thm} with respect to the metric $g^0$. There exists $\varepsilon >0$ such that
  for any $(\k, g) \in \Y(\M)$ satisfying
  \begin{align*}
    \Norm{(\k, g)- (\k^0, g^0)}{\Y(\M)} \leq \varepsilon ,
  \end{align*}
  the wave equation associated with $(\k, g)$  is exactly controllable  by $\omega$ in time $T$.
\end{theorem}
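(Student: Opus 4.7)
By Proposition~\ref{prop: equiv controllability observability}, it suffices to establish a uniform observability estimate: there exist $\varepsilon_0 > 0$ and $\Cobs > 0$ such that \eqref{eq: observability} holds with constant $\Cobs$ for every $(\k, g) \in \Y(\M)$ with $\Norm{(\k, g) - (\k^0, g^0)}{\Y(\M)} \leq \varepsilon_0$. I would argue by contradiction, in the spirit of \cite{Burq}. Assume the uniform estimate fails: there exist $(\k_n, g_n) \to (\k^0, g^0)$ in $\Y(\M)$ and solutions $u_n$ of $P_{\k_n, g_n} u_n = 0$ with $\E_{\k_n, g_n}(u_n)(0) = 1$ while $\Norm{\bld{1}_{(0,T) \times \omega}\, \d_t u_n}{L^2(\L)}^2 \to 0$. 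Since $(\k_n, g_n) \to (\k^0, g^0)$ uniformly, the energies $\E_{\k_n, g_n}$ are uniformly equivalent to $\E_{\k^0, g^0}$, so $(u_n)$ is uniformly bounded in $\Con^0([0,T]; H^1(\M)) \cap \Con^1([0,T]; L^2(\M))$.

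Extracting a subsequence, $u_n \rightharpoonup u$ weakly. Classical stability of the wave equation under Lipschitz convergence of the coefficients shows that $u$ solves $P_{\k^0, g^0} u = 0$ with $\d_t u = 0$ on $(0,T) \times \omega$; applying Theorem~\ref{theo2} to this $\Con^1$-reference problem then forces $u \equiv 0$. Hence $u_n \rightharpoonup 0$ weakly but not strongly (the energy remains equal to $1$), so the defect of strong convergence is captured by a nontrivial microlocal defect measure $\mu$ on $T^*\bigl((0,T) \times \M\bigr) \setminus 0$ associated with $(u_n)$.

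The heart of the proof is to show that $\mu$ satisfies the homogeneous transport equation
\begin{equation*}
  \transp{\Hamiltonian_{p^0}} \mu = 0,
\end{equation*}
where $\Hamiltonian_{p^0}$ is the Hamiltonian vector field of the principal symbol $p^0(t,x,\tau,\xi) = \tau^2 - a^0(x,\xi)$ of $P_{\k^0, g^0}$, whose coefficients are only $\Con^0$ because $(\k^0, g^0) \in \X^1(\M)$. Following the strategy of Section~\ref{sec: mdm and pde}, one tests with compactly supported symbols $a$, computes $\dup{[P_{\k_n, g_n}, \Op(a)] u_n}{u_n}$ and passes to the limit. The delicate step here is that $(\k_n, g_n)$ converges to $(\k^0, g^0)$ only in $W^{1,\infty}$: derivatives of the coefficients lie merely in $L^\infty$, and the standard symbolic calculus for the commutator breaks down. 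This is precisely where the Coifman-Meyer theorem is essential: it provides the $L^2$-boundedness of $[\Op(a), f]$ for a Lipschitz function $f$, which both extracts a well-defined principal part $\{p^0, a\}$ and controls all remainders uniformly in~$n$. Once the transport equation is established, Theorem~\ref{th: ODE} applies and asserts that $\supp \mu$ is a union of maximal integral curves of $\Hamiltonian_{p^0}$, i.e., of bicharacteristics of $p^0$.

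The contradiction is then immediate. On the one hand, $\mu$ vanishes on $T^*\bigl((0,T) \times \omega\bigr)$ because $\d_t u_n \to 0$ in $L^2$ there. On the other hand, the geometric control condition for $(\omega, T)$ with respect to $g^0$ forces every bicharacteristic above $(0,T) \times \M$ to intersect $T^*\bigl((0,T) \times \omega\bigr)$, so by the propagation property just established $\supp \mu$ must meet this set, giving $\mu \equiv 0$ and contradicting its nontriviality. The main obstacle, as already indicated, is the derivation of the transport equation in this low-regularity setting: the Lipschitz-only coefficients of $P_{\k_n, g_n}$ prohibit the usual symbolic calculus, and the combined use of the Coifman-Meyer commutator estimate and of the support-propagation Theorem~\ref{th: ODE} is what replaces the missing smoothness both in the derivation of the ODE and in its geometric consequences.
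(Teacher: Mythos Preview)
Your overall strategy---argue by contradiction, extract a microlocal defect measure, use Coifman--Meyer to derive the transport equation $\transp{\Hpref}\mu = 0$, invoke Theorem~\ref{th: ODE} for support propagation, and conclude via the geometric control condition---matches the paper's proof in Section~\ref{prooftheoprinc}. There is, however, one structural difference that creates a gap.

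To obtain $u_n \rightharpoonup 0$, you identify the weak limit $u$ and invoke Theorem~\ref{theo2} to force $u \equiv 0$. In the paper's logical order Theorem~\ref{theo2} is \emph{derived} as the special case $(\k,g)=(\k^0,g^0)$ of Theorem~\ref{theoprinc} (see the sentence following the statement of Theorem~\ref{theoprinc}), so this invocation is circular. The paper sidesteps the issue via the Bardos--Lebeau--Rauch relaxed-observability reduction: it first reduces \eqref{equ: comp-obs} to the weaker inequality~\eqref{equ: compact obs} carrying the additional compact term $\|(u(0),\d_t u(0))\|_{L^2\oplus H^{-1}}$. The negation of \eqref{equ: compact obs} then yields, after a diagonal extraction, a sequence for which this compact term \emph{also} tends to zero, so $u^k \rightharpoonup 0$ in $H^1_{\loc}(\L)$ follows directly from \eqref{mass+obslimit}, with no need to identify any weak limit or to appeal to observability of the reference problem. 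The passage from \eqref{equ: compact obs} back to \eqref{equ: comp-obs} still requires a unique-continuation argument for each fixed $(\k,g)$, but that is standard for Lipschitz coefficients and does not involve Theorem~\ref{theo2}.

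Your route can be repaired: replace the appeal to Theorem~\ref{theo2} by a direct unique-continuation argument for the $\Con^1$ reference operator $P_{\k^0,g^0}$ (available via Carleman estimates) to kill the weak limit. Once that is done the two proofs are essentially equivalent; the paper's version simply packages the unique-continuation step into the standard BLR compactness--uniqueness machinery rather than performing it on the weak limit of the contradicting sequence.
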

Observe that Theorem~\ref{theo2} is a direct consequence of
Theorem~\ref{theoprinc}. We shall thus concentrate on this second more
general result. Its proof relies on the measure support structure
result of Theorem~\ref{th: ODE}.

The sequence of Theorems~\ref{theorem: Burq 97}, \ref{theo2}, and \ref{theoprinc}
calls for the following important comment. Under the assumption of
Theorem~\ref{theorem: Burq 97}, that is, $ (\k, g) \in \X^2(\M)$, there is
a {\em geodesic flow} and the geometric condition of
Definition~\ref{def: Geometric control condition-thm} is actually a
condition on the flow.  Under the assumption of Theorem~\ref{theo2},
that is, $ (\k, g) \in \X^1(\M)$, as pointed out above there is no
geodesic flow in general. Yet, maximal geodesics are still well
defined and, the geometric condition of Definition~\ref{def:
  Geometric control condition-thm} makes sense because it does not
refer to a flow. However, under the assumption of
Theorem~\ref{theoprinc}, that is, $(\k, g) \in \Y(\M)$, geodesics cannot
be defined in general. No geometric condition can be formulated. Yet,
Theorem~\ref{theoprinc} is a perturbation result and a geometric
condition is expressed for a reference pair $(\k^0, g^0) \in \X^1(\M)$
around which a (small) \nhd in $\Y(\M)$ is considered.

  The following
remark further emphasizes that the perturbation is to be considered around a pair
$(\k^0, g^0) \in \X^1(\M)$ for which the geometric control condition holds
and not around a pair $(\k^0, g^0) \in \X^1(\M)$ for which exact
controllability (or equivalently observability) holds.
\begin{remark}[On the perturbation result] Having both our results, geometric control for $\Con^1$ metrics and Lipschitz stability of exact controllability around a reference metric satisfying the geometric control condition, a natural question is whether the exact controllability property is itself stable by perturbation.    On the one hand, it is classical that  the exact controllability property  is {\em stable}
  under lower-order perturbations of the elliptic operator
  $\A_{\k,g}$ but on the other hand, it is possible to show that it is {\em not} stable under (smooth)
  perturbations of the geometry or the metric. 
  
  Let us illustrate this instability property with a quite simple example. Consider the wave equation on the sphere
$$\mathbb{S}^{d}= \{ x\in \mathbb{R}^{d+1}; \sum_{i} x_i ^2 =1\},$$
endowed with its standard metric and with control domain the open
hemisphere
$$ \omega = \{ x \in \mathbb{S}^d; x_1 >0\}.$$
Even though $\omega$ does not fulfill the geometric control condition
of Definition~\ref{def: Geometric control condition-thm} exact controllability holds for this geometry by an unpublished result  by
G.~Lebeau (see \cite[Section VI.B]{Leb} and \cite{Zhu} for extensions). Consider now the sphere
endowed with the above standard metric, with the smaller control domain
$$\omega_\eps=\{ x \in \mathbb{S}^d; x_1 >\eps\},$$ for some $\eps>0$. This
second geometry is $\eps$-close to the Lebeau example in the
$\Cinf$-topology. Yet, for all $\varepsilon >0$, exact controllability does {\em not}
hold, because there exists a geodesic (the equator, $\{ x\in \mathbb{S}^d; x_1=0\}$) that does not encounter $\overline{\omega_\varepsilon}$.  This shows that in Theorem~\ref{theoprinc}, the assumption that
the reference geometry should satisfy the geometric control condition
{\em cannot} be replaced by the weaker assumption that  it should satisfy the exact
controllability property. This also shows that our perturbation
argument will have to be performed on the actual proof that
geometric control implies exact controllability and {\em not} on the
final property itself.
\end{remark}

\subsubsection{Further results on the control operator}
We finish this section with results analyzing the influence of some
metric perturbations on the control process.

We introduce further levels of regularity for the coefficients by
setting for $k \in \N\cup \{ +\infty\}$, 
\begin{align*}
  \X^k(\M)= \{ (\k,g);\ \k\in  \Con^k(\M) \ \text{and} \  \ g \ \text{is a}\
    \Con^k\text{-metric on}\ \M \}.
\end{align*}
First, we consider $k\geq 2$. We recall the notation
$P_{\k, g} = \d_t^2 - \A_{\k, g}+1$ with
$\A_{\k,g}= \k^{-1} \divg (\k \nablag)$, and we assume that
$ (\k, g)\in \X^k(\M)$, and that $(\omega, T)$ satisfies the geometric
control condition of Definition~\ref{def: Geometric control
  condition-thm} for geodesics given by the metric $g$. Then, by
Theorem~\ref{theorem: Burq 97}, given
$(y^0,y^1) \in H^1(\M) \times L^2(\M)$, there exists
$f\in L^2((0,T)\times \omega)$ such that the solution to
\eqref{eq.controlint} satisfies $y(T) =0$ and $\d_t y(T) =0$.  One can
prove that among all possible control functions there is one of
minimal $L^2$-norm. We denote by $f_{\k, g}^{y^0,y^1}$ this control
function usually named the HUM control function; see for instance
\cite{Lions}. Moreover, the map
\begin{align}
  \label{eq: def HUM operator}
  H_{\k, g}: H^1(\M) \oplus L^2(\M) &\to L^2((0,T)\times \M)\\
                      (y^0,y^1) &\mapsto      f_{\k, g}^{y^0,y^1},\notag
\end{align}
is continuous. Note that $f_{\k, g}^{y^0,y^1}$ is actually a weak solution of the
wave equation with initial data in $L^2(\M)\times H^{-1}(\M)$, meaning
that one moreover has $f_{\k, g}^{y^0,y^1} \in \Con^0([0,T],L^2(\M))$. 
\begin{theorem}[Lack of continuity of the HUM-operator -- Case $k\geq 2$]
  \label{HUM operator}
  Let $k \geq 2$ and $(\k, g)$ as above.
   For any neighborhood $\mathcal{U}$ of $(\k, g)$ in
   $\X^k(\M)$, there exist $ (\tk, \tg) \in \mathcal{U}$ and an initial
   data $(y^{0},y^{1}) \in H^1(\M) \times  L^2(\M)$,
   with $ \Norm{y^0}{H^1}^2+\Norm{y^1}{L^2}^2 =1$, such that the
    respective solutions $y$ and $\ty$ of
\begin{equation}
  \label{eq: stab-HUM}
  \begin{cases}
    P_{\k, g} y 
    =  \bld{1}_{(0,T) \times \omega} \,f ^{y^0,y^1}_{\k,  g} & \text{in} \  (0,T)\times \M,\\
    (y,\d _{t}y)_{|t=0}=(y^0,y^1) & \text{in} \ \M,
  \end{cases}
  \qquad 
  \begin{cases}
    P_{\tk, \tg} \ty 
    =  \bld{1}_{(0,T) \times \omega} \,f ^{y^0,y^1}_{\k,  g} & \text{in} \  (0,T)\times \M,\\
    (\ty ,\d _{t}\ty )_{|t=0}=(y^0,y^1) & \text{in} \ \M,
  \end{cases}
\end{equation}
are such that 
\begin{equation}\label{behaviorHUM}
\E_{\k, g}(\ty - y)(T) = \E_{\k, g}(\ty)(T) \geq 1/2 .
\end{equation}
Moreover, there exists $C_T > 0$ such that
\begin{equation}\label{HUM-noncontinu}
 \bigNorm{(H_{\k, g} - H_{\tk,\tg})(y^0,y^1)}{L^2((0,T)\times \omega)} 
 = \Norm{f^{y^0,y^1}_{\k, g} - f ^{y^0,y^1}_{\tk, \tg}}{L^2((0,T)\times \omega)} \geq C_T,
\end{equation}
for $(y^0, y^1)$ as given above. 
\end{theorem}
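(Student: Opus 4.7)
The strategy is to use Gaussian beams at high frequency $h^{-1}$ along a geodesic $\gamma$ of the reference metric $(\k,g)$, and to exploit the fact that for any neighborhood $\mathcal{U}$ of $(\k,g)$ in $\X^k(\M)$ one can choose a perturbation $(\tk,\tg) \in \mathcal{U}$ whose perturbed geodesic $\tilde\gamma$ starting from $(\gamma(0),\dot\gamma(0))$ drifts away from $\gamma$ before $\gamma$ encounters $\omega$. The HUM control designed to extinguish the beam travelling along $\gamma$ is then microlocally \emph{misaligned} with the perturbed wave travelling along $\tilde\gamma$, so that the perturbed solution retains essentially all of its initial energy at time $T$.

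\textbf{Geometric preparation and construction of the data.} Using the GCC, pick a unit-speed geodesic $\gamma$ of $(\k,g)$ that enters $\omega$ only after a positive delay. For an arbitrarily small $\eps>0$ construct $(\tk,\tg) \in \mathcal{U}$ coinciding with $(\k,g)$ on a neighborhood of $\gamma(0)$ but modified in an intermediate region so that $\tilde\gamma$, starting from the same initial phase-space point, drifts from $\gamma$ by a fixed amount $\delta=\delta(\eps)>0$ throughout the subinterval on which $\gamma$ crosses $\omega$. For each small $h>0$, build a Gaussian beam pair $(y^0_h,y^1_h)$ of frequency $h^{-1}$ concentrated near $(\gamma(0),\dot\gamma(0))$, normalized by $\Norm{y^0_h}{H^1}^2+\Norm{y^1_h}{L^2}^2=1$; this yields a unit-energy state microlocalized on the bicharacteristic $\Gamma$ lifting $\gamma$. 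The HUM control $f_h = H_{\k,g}(y^0_h,y^1_h)$ may be written as $f_h = \d_t u_h\big|_{(0,T)\times\omega}$, where $u_h$ solves $P_{\k,g}u_h = 0$ with initial data $\Lambda_{\k,g}^{-1}(y^0_h,y^1_h)$, $\Lambda_{\k,g}$ being the HUM Gramian. The observability inequality makes $\Norm{f_h}{L^2}$ bounded uniformly in $h$, and ellipticity of $\Lambda_{\k,g}$ at high frequencies ensures that $u_h$ remains microlocalized on $\Gamma$, so that $f_h$ is concentrated on $\Gamma \cap \bigl((0,T)\times\omega\bigr)$.

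\textbf{Energy estimate and conclusion.} Decompose $\ty_h = z_h + w_h$, where $z_h$ is the free evolution of $(y^0_h,y^1_h)$ under $P_{\tk,\tg}$ and $w_h$ solves $P_{\tk,\tg}w_h = \bld{1}_{(0,T)\times\omega}f_h$ with zero initial data. Energy conservation for $P_{\tk,\tg}$ yields $\E_{\tk,\tg}(z_h)(T)=1/2$, comparable to $\E_{\k,g}(z_h)(T)$ since $(\tk,\tg)$ is close to $(\k,g)$. The core estimate is $\E_{\k,g}(w_h)(T)\to 0$ as $h\to 0$, with $\eps$ fixed: the wave $w_h$ is, microlocally, the forward propagation under the \emph{perturbed} bicharacteristic flow of a source concentrated on $\Gamma$, but the physical support of $f_h$ (within $O(\sqrt h)$ of $\gamma \cap \omega$) and the trace of $w_h$ on $(0,T)\times\omega$ (within $O(\sqrt h)$ of $\tilde\gamma \cap \omega$) are separated by $\delta \gg \sqrt h$; a microlocal defect measure argument in the spirit of this article then gives a vanishing interaction. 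Combining, $\E_{\k,g}(\ty_h)(T) \to 1/2$, so \eqref{behaviorHUM} holds for $h$ small enough upon setting $(y^0,y^1)=(y^0_h,y^1_h)$. For \eqref{HUM-noncontinu}, apply the observability inequality for $P_{\tk,\tg}$ to the difference of $\ty$ driven by $f^{y^0,y^1}_{\k,g}$ and by $f^{y^0,y^1}_{\tk,\tg}$: since the latter drives $\ty$ to zero while the former leaves an energy of at least $1/2$, the $L^2$-difference of the controls is bounded below by a positive constant $C_T$.

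\textbf{Main obstacle.} The delicate step is rigorously establishing $\E_{\k,g}(w_h)(T) \to 0$: one must track the microlocal concentration of $f_h$ through the \emph{perturbed} propagator and show that the physical-space separation of the two bicharacteristics on $(0,T)\times\omega$ forces interaction to vanish in the high-frequency limit. A related technical point is verifying that $\Lambda_{\k,g}^{-1}$ preserves the Gaussian beam structure of the initial data, which relies on the ellipticity of $\Lambda_{\k,g}$ in the high-frequency regime and on propagation of microlocal defect measures for $\Con^k$, $k\geq 2$, metrics.
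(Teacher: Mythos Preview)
Your approach is substantially more intricate than the paper's, and as written it contains a genuine error. The paper's key idea is to take a \emph{conformal} perturbation $\tg = (1+\eps)g$ (keeping $\tk = \k$); this makes the characteristic sets $\Char(p_{\k,g})$ and $\Char(p_{\k,\tg})$ \emph{globally disjoint} on $S^*\L$, since on the first $\tau^2 = |\xi|^2_g$ while on the second $\tau^2 = (1+\eps)^{-1}|\xi|^2_g$. No special initial data are needed: any sequence $(y^{k,0},y^{k,1})$ converging weakly to zero with unit energy works. The HUM control $f^k = f^{y^{k,0},y^{k,1}}_{\k,g}$ is itself a free solution of $P_{\k,g}$, so its $L^2$ defect measure is supported in $\Char(p_{\k,g})$; the defect measure of $\d_t\ty^k$ is supported in $\Char(p_{\k,\tg})$; by G\'erard's orthogonality lemma for mutually singular defect measures the cross term $\bigl(\bld{1}_{(0,T)\times\omega} f^k,\, \d_t\ty^k\bigr)_{L^2}$ tends to zero, and the energy identity immediately yields $\E_{\k,\tg}(\ty^k)(T) \to \E_{\k,\tg}(\ty^k)(0)$. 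This sidesteps Gaussian beams entirely and, crucially, the microlocal analysis of $\Lambda_{\k,g}^{-1}$ that you flag as an obstacle: one only needs that $f^k$ solves a free wave equation, not where it is concentrated.

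Your claim that $\E_{\k,g}(w_h)(T) \to 0$ is not correct, and the reasoning offered for it is confused. The Duhamel piece $w_h$ is generated by the source $\bld{1}_\omega f_h$, so it is microlocalized on the \emph{perturbed} bicharacteristics emanating from $\supp f_h$ (near $\gamma\cap\omega$), not near $\tilde\gamma$, which is the perturbed geodesic issued from $\gamma(0)$. Since $\|f_h\|_{L^2}$ is bounded below (it must extinguish a unit-energy beam), $\E(w_h)(T)$ is of order one, not $o(1)$. What you actually need, via the energy identity $\E_{\tk,\tg}(\ty_h)(T) - \E_{\tk,\tg}(\ty_h)(0) = \bigl(\bld{1}_\omega f_h,\, \d_t\ty_h\bigr)_{L^2}$, is that this full cross term vanishes; equivalently, in your decomposition, only that $\bigl(\bld{1}_\omega f_h,\, \d_t z_h\bigr)_{L^2} \to 0$, since $\bigl(\bld{1}_\omega f_h,\, \d_t w_h\bigr)_{L^2} = \E_{\tk,\tg}(w_h)(T) \geq 0$. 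Your physical-space separation applies to $z_h$ (concentrated near $\tilde\gamma$), not to $w_h$, and would salvage the argument only once you have shown that $f_h$ is physically concentrated near $\gamma$ --- precisely the step about $\Lambda_{\k,g}^{-1}$ that the paper's conformal trick renders unnecessary.
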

\setcounter{theorembiss}{\arabic{theorem}}
 \begin{remark}
 The result of Theorem~\ref{HUM operator} states that starting from the same initial data and solving the two wave equations with the same control vector $f_{\k, g}$ associated with $P_{\k, g}$, a small perturbation of the metric can induce a large error for the final state $(y(T),\d _{t}y(T))$. In other words, the two dynamics are no longer close. In particular, the map 
 \begin{equation*}
 \X^k(\M) \owns (\k, g)  \longrightarrow H_{\k, g} \in \mathscr L \big(H^1(\M)
 \oplus  L^2(\M),  L^2((0,T)\times \M)\big)
 \end{equation*}
 is not continuous. 
 \end{remark}
 \begin{remark}
   The result of Theorem~\ref{HUM operator} can also be stated on open
   bounded smooth domains of $\R^n$ in the case of homogeneous
   Dirichlet condition. In fact, as can be checked in what follows,
   its proof only relies on basic properties of microlocal defect
   measures (support localization and propagation) that are known to be valid in
   this framework;  see~\cite{Burq}.
 \end{remark}
 \begin{remark}
   \label{rem: HUM operator - GCC}
    In the statement of Theorem~\ref{HUM operator} if the
    neighborhood $\mathcal{U}$ of $ (\k, g)$ in $\X^k$ is small
    enough, the pair $(\omega, T)$ also satisfies the geometric
    control condition of Definition~\ref{def: Geometric control
      condition-thm} for $(\tk, \tg)$ and therefore
    $f ^{y^0,y^1}_{\tk, \tg}$ is well defined. In particular, this is
    clear as in the case $k \geq 2$ there is a well defined and unique
    geodesic flow.

    \medskip
    The case $k =1$ is quite different as there is no
    geodesic flow, as already mentioned above. However, given $(\k,g)
    \in \X^1$ and $(\omega, T)$ if the
    Rauch-Taylor geometric control condition of Definition~\ref{def:
      Geometric control condition-thm} holds for $(\omega, T)$ for the
    geodesics associated with $g$,
    given any \nhd $\mathcal U$ of  $(\k,g)$ in $\X^1$ one can still
    find $(\tk,\tg) \in \mathcal U$ such that 
    \begin{enumerate}
    \item  the  geometric
    control condition still holds for the geodesics associated with
    $\tg$,
    \item  the result of  Theorem~\ref{HUM operator} 
    also holds. 
    \end{enumerate}
  \end{remark}
\begin{theorembis}[Lack of continuity of the HUM-operator --  Case $k=1$]
  \label{HUM operator bis}
  Let $k = 1$ and $(\k, g) \in \X^1$ as above.  For any neighborhood
  $\mathcal{U}$ of $(\k, g)$ in $\X^1(\M)$, there exist
  $(\tk, \tg) \in \mathcal{U}$  and an initial
   data $(y^{0},y^{1}) \in H^1(\M) \times  L^2(\M)$,
   with $ \Norm{y^0}{H^1}^2+\Norm{y^1}{L^2}^2 =1$, such that the geometric control
  condition of Definition~\ref{def: Geometric control condition-thm}
  for geodesics given by the metric $\tg$ holds and moreover the
  results listed in Theorem~\ref{HUM operator} hold. 
\end{theorembis}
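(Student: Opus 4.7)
The strategy is to carry out the argument of Theorem~\ref{HUM operator} directly at $\Con^1$ regularity, systematically substituting each $\Con^k$-ingredient ($k\geq 2$) by the low-regularity counterpart developed in the present paper. An approximation-and-transfer scheme going through a smoothing $(\k^0,g^0)\in\X^2$ would be problematic here, since the assertion to be proven is precisely a discontinuity of the HUM operator between $(\k,g)$ and any smoothing; the perturbation must therefore be constructed in $\X^1$ around the actual reference pair $(\k,g)$.

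First I would check that all HUM operators appearing in the statement are well defined. Theorem~\ref{theo2} gives a meaning to $H_{\k,g}$. For $(\tk,\tg)\in\X^1$ sufficiently close to $(\k,g)$ in the $\X^1$-norm, the persistence of the geometric control condition of Definition~\ref{def: Geometric control condition-thm} follows from a compactness argument: if $\tg_n\to g$ in $\X^1$ and $\gamma_n$ is a maximal $\tg_n$-geodesic avoiding $\omega$ on $[0,T]$, then by Arzelà--Ascoli (using the continuity of the geodesic spray with respect to the $\Con^1$-topology) a subsequence converges uniformly to a maximal $g$-geodesic avoiding $\omega$, contradicting GCC for $g$. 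This uniform persistence of GCC provides a neighborhood on which the operator $H_{\tk,\tg}$ is defined, and one may restrict $\mathcal U$ to it.

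Next I would reproduce, at $\Con^1$ regularity, the construction underlying Theorem~\ref{HUM operator}. Fix a maximal geodesic $\gamma$ of $g$ and a sequence of unit-energy initial data $(y^0_n,y^1_n)$ whose $P_{\k,g}$-solutions concentrate their energy on $\gamma$, extracting a nontrivial microlocal defect measure $\mu$ supported on the bicharacteristic lift $\tilde\gamma$. Sections~\ref{sec: mdm and pde}--\ref{sec proof: th: ODE} show that $\transp{\Hp}\mu=0$ and, by Theorem~\ref{th: ODE}, $\supp\mu$ is a union of integral curves of $\Hp$. I would then perturb $g$ by a compactly supported, $\Con^1$-small bump localized in a thin tube around $\tilde\gamma$ to obtain $\tg$ whose Hamiltonian field $\Hamiltonian_{\tilde p}$ steers bicharacteristics out of this tube before time $T$, while keeping $(\tk,\tg)\in\mathcal U$ and preserving GCC by the previous step. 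By construction, the control $f^{y^0_n,y^1_n}_{\k,g}$, tuned to dissipate the mass of $\mu$ into $(0,T)\times\omega$ along $\tilde\gamma$, fails to dissipate the mass of the MDM associated with the $P_{\tk,\tg}$-problem, which now lies on a different curve. Passing to a single initial datum $(y^0,y^1)$ extracted from the sequence yields the lower bound \eqref{behaviorHUM}, and \eqref{HUM-noncontinu} then follows from \eqref{behaviorHUM} by the continuous dependence of the $P_{\tk,\tg}$-solution on its source: the $L^2$-distance between the two controls controls from above the $H^1\times L^2$-distance between the two final states, which is bounded below by $\sqrt{2\,\E_{\k,g}(\ty)(T)}$ up to a multiplicative constant.

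The principal obstacle is the legitimate use of microlocal defect measures and of their commutators with multiplication by the coefficients of $\A_{\k,g}$ at merely $\Con^1$ regularity; this is exactly what the Coifman--Meyer commutator estimate recalled in the introduction secures, in tandem with the $\Con^0$-vector field support propagation of Theorem~\ref{th: ODE}. Once these two ingredients are granted, the remainder of the argument parallels the proof of Theorem~\ref{HUM operator} without substantive change.
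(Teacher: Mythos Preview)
Your plan has a genuine gap at the core step. The paper does \emph{not} use a localized bump perturbation; it takes the global conformal rescaling $\tg=(1+\eps)g$ with $\tk=\k$. This choice buys two things at once. First, $\tg$-geodesics coincide as unparametrized curves with $g$-geodesics (the Christoffel symbols are unchanged; only the speed-$1$ parametrization is rescaled), so GCC for $\tg$ follows from GCC for $g$ once $\eps$ is small, and no Arzel\`a--Ascoli compactness argument on general $\Con^1$-perturbations is needed. Second --- and this is the heart of the proof --- one has $\Char(p_{\k,g})\cap\Char(p_{\k,\tg})\cap S^*\L=\emptyset$, since the equations $\tau^2=|\xi|_x^2$ and $\tau^2=(1+\eps)^{-1}|\xi|_x^2$ are incompatible away from the zero section. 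Now the HUM control $f^k=H_{\k,g}(y^{k,0},y^{k,1})$ is itself a \emph{free} solution of $P_{\k,g}f^k=0$, so by Proposition~\ref{prop-int-bis} its $L^2$-microlocal defect measure $\mu_f$ is supported in $\Char(p_{\k,g})$; the $H^1$-microlocal defect measure $\tmu$ of $(\ty^k)$ is supported in $\Char(p_{\k,\tg})$ by Proposition~\ref{prop-int}. Disjointness of the supports (via \cite[Proposition~3.1]{Ge91}) kills the cross term in the energy identity
\[
\E_{\k,\tg}(\ty^k)(T)-\E_{\k,\tg}(\ty^k)(0)=\bigl(\bld{1}_{(0,T)\times\omega}f^k,\d_t\ty^k\bigr)_{L^2},
\]
and one reads off $\E_{\k,\tg}(\ty^k)(T)\to 1$.

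Your bump construction does not produce globally disjoint characteristic sets, so this mechanism is unavailable, and you offer no replacement beyond the heuristic ``the control, tuned for $\tilde\gamma$, fails to dissipate mass on a different curve''. That is not an argument: the control is a source term in a PDE, not an operator acting directly on the measure, and nothing prevents a control microlocally concentrated near $\tilde\gamma$ from still driving $\ty^k$ to zero if the $\tg$-bicharacteristic revisits the support of the control in $(0,T)\times\omega$. You also omit the key structural fact that $f^k$ solves a free wave equation, which is precisely what localizes $\supp\mu_f$ and makes the disjoint-support argument work. Finally, the derivation of \eqref{HUM-noncontinu} in the paper proceeds by the splitting $\ty^k=v_1+v_2$, with $v_1$ driven by the correct control $f^k_{\k,\tg}$ (hence $(v_1,\d_t v_1)(T)=0$) and $v_2$ driven by the control difference; your energy-estimate sketch points in the right direction but should be made precise along these lines.
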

The proofs of Theorems~\ref{HUM operator} and \ref{HUM operator bis}
are given in Section~\ref{sec: proof HUM operator}.
  
\medskip
We finish this section with some remarks and some questions.
\begin{remark}
  \label{remark:}
  In all results above we have used $1_{(0,T)\times\omega}$ as a
  control operator, that is, the characteristic function of an open
  set. We could have also considered a control operator given by
  $1_{(0,T)}(t)\chi(x)$, with $\chi$ a
 smooth function on $\M$. 
  The controlled wave equation then has the form
  \begin{equation}
    \label{eq.controlint - smooth control op}
    P_{\k,g} y =  \bld{1}_{(0,T)}\, \chi  \, f, \quad (y_{|t=0}, \d_t y_{|t=0}) = (y^0, y^1),
  \end{equation}
  In such case, the open set to be used in the geometric control
  condition is $\omega = \{ \chi\neq 0\}$. This is often done this way, in
  particular since the smoothness of the function $\chi$ allows one to use some
  microlocal techniques that require regularity in the operator
  coefficients. The results and proofs of the present article can be
  written {\em mutatis mutandis} for this type of control operator.
\end{remark}

\subsubsection{Comparison with the smooth case and some open questions}
Following on the previous remark, with a smooth in space control
operator, one can wonder above the smoothness of the HUM
operator. This question is addressed in the work of the second author
jointly with G.~Lebeau~\cite{DL:09}. In fact, a gain of regularity in
the initial data $(y^0, y^1)$ yields an equivalent gain of regularity in the HUM
control function $f^{y^0,y^1}_{\k, g}$. For instance, for $(y^0, y^1)
\in H^2(\M)\times H^1(\M)$ one finds $f^{y^0,y^1}_{\k, g} \in
\Con^0([0,T], H^1(\M))$. Note that the result of \cite{DL:09} is proven
in the case of smooth coefficients, that is, $(\k,g) \in
\X^\infty$. We thus consider this smooth case in the discussion that
ends this introductory section. Open questions 
around  the results of Theorems~\ref{HUM
   operator} and \ref{HUM operator bis} are then raised.

 As we shall see in their proofs, the result of
 Theorems~\ref{HUM operator} and \ref{HUM operator bis} rely
 on the high frequency behavior of the solutions to \eqref{eq:
   stab-HUM}. In the case of smooth coefficients and a smooth control
 operator, if we assume smoother data $(y^0, y^1)$ in the HUM control
 process, the result of Theorem~\ref{HUM operator} does not hold any
 more. The HUM control process becomes regular with respect to
 $(\k,g)$ as expressed in the following proposition.
\begin{proposition}[HUM control process for smooth data]\label{Control of smooth data}
  Consider $ (\k, g) \in \X^\infty(\M)$ and let $\chi \in \Cinf(\M)$.
  Set $\omega = \{ \chi \neq 0\}$ and assume that $(\omega,
  T)$ fulfills the geometric control condition of Definition~\ref{def:
    Geometric control condition-thm} for  the geodesics associated
  with $(\k, g)$.  Let $\alpha \in
  (0,1]$. There exists $C_\alpha>0$ such that for any $(\tk, \tg) \in
  \X^\infty(\M)$ and any $(y^{0},y^{1}) \in H^{1+\alpha}(\M) \times
  H^{\alpha}(\M) $, the respective solutions $y$ and $\ty$ to
\begin{equation*}
  \begin{cases}
    P_{\k, g} y 
    =  \bld{1}_{(0,T)}\, \chi \,f ^{y^0,y^1}_{\k,  g} & \text{in} \  (0,T)\times \M,\\
    (y,\d _{t}y)_{|t=0}=(y^0,y^1) & \text{in} \ \M,
  \end{cases}
  \qquad 
  \begin{cases}
    P_{\tk, \tg} \ty 
    =  \bld{1}_{(0,T)} \, \chi  \,f ^{y^0,y^1}_{\k,  g} & \text{in} \  (0,T)\times \M,\\
    (\ty ,\d _{t}\ty )_{|t=0}=(y^0,y^1) & \text{in} \ \M,
  \end{cases}
\end{equation*}
satisfy
\begin{equation*}
  \E_{\k, g}(y-\ty)(T)^{1/2}
  \leq C_\alpha  \Norm{(\k, g) -(\tk, \tg) }{\X^1(\M)}^\alpha \, \Norm{(y^{0},y^{1}) }{H^{1+\alpha} (\M)\oplus H^\alpha(\M)}.
\end{equation*}
\end{proposition}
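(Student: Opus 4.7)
The plan is to reduce the inequality to a standard wave energy estimate applied to $z := y - \ty$, and then to close by real interpolation between the two natural endpoints $\alpha = 0$ and $\alpha = 1$. Subtracting the two systems of the statement, since both right-hand sides equal $\bld{1}_{(0,T)}\chi f^{y^0,y^1}_{\k,g}$ and the initial data coincide, I find that $z$ satisfies
\begin{equation*}
  P_{\k,g} z = (\A_{\k,g} - \A_{\tk,\tg}) \ty, \qquad z_{|t=0} = 0, \quad \d_t z_{|t=0} = 0.
\end{equation*}
The operator $\A_{\k,g} - \A_{\tk,\tg}$ is of order two, with coefficients in local charts polynomial in $(\k, g)$, $(\tk, \tg)$ and their first derivatives, vanishing when the two pairs coincide; hence a direct computation yields the pointwise bound
\begin{equation*}
  \bigNorm{(\A_{\k,g} - \A_{\tk,\tg}) \ty(t)}{L^2(\M)} \leq C \Norm{(\k,g) - (\tk,\tg)}{\X^1(\M)} \Norm{\ty(t)}{H^2(\M)},
\end{equation*}
with $C$ uniform when $(\tk, \tg)$ stays in a fixed bounded subset of $\X^1$. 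The standard $L^2$-energy inequality for $P_{\k,g}$ then gives
\begin{equation*}
  \E_{\k,g}(z)(T)^{1/2} \leq C_T \Norm{(\k,g) - (\tk,\tg)}{\X^1(\M)} \Norm{\ty}{L^1((0,T); H^2(\M))}.
\end{equation*}

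For the endpoint $\alpha = 1$, I would invoke the HUM-regularity theorem of \cite{DL:09}: for smooth $(\k,g)$ and smooth $\chi$, the HUM operator $H_{\k,g}$ propagates one derivative of regularity, so that $(y^0, y^1) \in H^2(\M) \oplus H^1(\M)$ produces $\chi f^{y^0,y^1}_{\k,g} \in \Con^0([0,T]; H^1(\M))$. An $H^2$-energy estimate for the smooth operator $P_{\tk,\tg}$ then bounds $\|\ty\|_{L^\infty((0,T); H^2)}$ by $\|(y^0, y^1)\|_{H^2 \oplus H^1}$, and substituting into the previous display yields the endpoint
\begin{equation*}
  \E_{\k,g}(y - \ty)(T)^{1/2} \leq C \Norm{(\k,g) - (\tk,\tg)}{\X^1(\M)} \Norm{(y^0, y^1)}{H^2(\M) \oplus H^1(\M)}.
\end{equation*}
The endpoint $\alpha = 0$ comes from the $L^2$-energy inequality for $\ty$ (uniform in $(\tk, \tg)$ within a fixed neighborhood of $(\k, g)$) together with the continuity of $H_{\k,g}: H^1 \oplus L^2 \to L^2$, and the fact that $y(T) = 0$ by construction: this gives $\E_{\k,g}(y - \ty)(T)^{1/2} \leq C \|(y^0, y^1)\|_{H^1 \oplus L^2}$.

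For fixed $(\k, g)$ and $(\tk, \tg)$, the map $(y^0, y^1) \mapsto ((y - \ty)(T), \d_t (y - \ty)(T))$ is linear from the initial data space to $H^1(\M) \oplus L^2(\M)$, so real interpolation with $\theta = \alpha$ between the two endpoints, using the standard identifications $(H^1, H^2)_{\alpha, 2} = H^{1+\alpha}$ and $(L^2, H^1)_{\alpha, 2} = H^\alpha$, delivers the claimed inequality with constant $C_\alpha \leq C_0^{1-\alpha} C_1^\alpha$. The principal technical point I anticipate is the $H^2$-energy estimate for $\ty$: commuting $P_{\tk, \tg}$ against a first-order elliptic operator such as $(1 - \A_{\tk,\tg})^{1/2}$ and absorbing the resulting commutator requires the full $\X^\infty$-smoothness of $(\tk, \tg)$ and elliptic regularity for $\A_{\tk,\tg}$. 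Uniformity of the constants in $(\tk, \tg)$ calls for restricting attention to $(\tk, \tg)$ in a fixed bounded $\X^\infty$-subset, which is harmless because for $\|(\k, g) - (\tk, \tg)\|_{\X^1} \geq 1$ the estimate reduces to the trivial $\alpha = 0$ bound multiplied by $\|(\k, g) - (\tk, \tg)\|_{\X^1}^\alpha \geq 1$.
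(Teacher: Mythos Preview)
Your proof is correct and follows essentially the same approach as the paper: subtract the two equations to obtain $P_{\k,g}(y-\ty) = (\A_{\k,g} - \A_{\tk,\tg})\ty$, invoke the HUM-regularity result of \cite{DL:09} together with an $H^2$-energy estimate for $\ty$ to handle the endpoint $\alpha = 1$, use a direct energy bound for $\alpha = 0$, and interpolate. Your additional discussion of the uniformity of the constants in $(\tk,\tg)$ is a point the paper leaves implicit; your reduction of the large-perturbation case to the $\alpha = 0$ bound is a reasonable way to address it.
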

The proof of Proposition~\ref{Control of smooth
   data} is given in Section~\ref{sec: proof Control of smooth data}.

 \medskip In the above proposition coefficients are chosen smooth,
 quite in contrast with the rest of this article. As explained above,
 and as the reader can check in the proof, this lies in the use of the
 regularity of the HUM operator with respect to the data $(y^0,y^1)$,
 a result proven for smooth coefficients in \cite{DL:09}. The result
 of  Proposition~\ref{Control of smooth data} raises the following
 natural questions:
 \begin{enumerate}
 \item Does the HUM operator exhibit regularity with respect
   to the data $(y^0,y^1)$ similar to what is proven in \cite{DL:09} in the case of not so smooth coefficients?
 \item If so, if one increases the smoothness of the data $(y^0,y^1)$
   as in Proposition~\ref{Control of smooth data}, does the HUM control
 process also become regular with respect of the metric?
 \end{enumerate}

\section{Geometric aspects and operators}
\label{sec: Geometric facts}

We define the smooth manifold $\L = \R \times \M$ and $T^* \L$  its cotangent
bundle. We denote by $\pi: T^*\L \to \L$ the natural projection.
Elements in $T^*\L $ are denoted by $(t,x, \tau, \xi)$. One has $\pi
(t,x, \tau, \xi) = (t,x)$.

Setting $|\xi|_x^2 = g_x(\xi, \xi)$ the Riemannian norm in
the cotangent space of $\M$ at $x$, we define
\begin{align*}
S^*\L = \{(t,x,\tau,\xi) \in T^* \L, \tau^2 + |\xi|_x^2 = 1\} , 
\end{align*}
the cosphere bundle of $\L$.  We shall also use the associated cosphere bundle in the spatial  variables only,
\begin{align*}
S^* \M = \{(x, \xi) \in T^* \M, |\xi|_x^2 = 1/2\} .
\end{align*}
For a $\Con^k$-metric both $S^*\M$ and $S^* \L$ are
$\Con^k$-manifolds. 

Consider a $\Cinf$-atlas $\Atlas^\M = (\mathcal C_j^\M)_{j \in J}$ of
$\M$, $\#J< \infty$,   with $\mathcal
C^\M_j = (O_j, \theta_j)$ where $O_j$ is an open set of $\M$ and
$\theta_j: O_j \to \tilde{O}_j$ is a bijection for
$\tilde{O}_j$ an open set of $\R^d$. For $j \in J$, we define
$\mathcal C_j = (\O_j, \vartheta_j)$ with $\O_j = \R\times O_j$ and 
\begin{align*}
  \vartheta_j: \O_j  &\to \tO_j\\
  (t,x)&\mapsto \big(t, \theta_j(x)\big),
\end{align*}
with $\tO_j = \R \times \tilde{O}_j$. 
Then $\Atlas = (\mathcal C_j)_{j \in J}$ is a $\Cinf$-atlas for $\L$.

In what follows for simplicity we shall use the same notation for an
element of $T^* \L$ and its local representative if no confusion arises.

\subsection{Hamiltonian vector field and \bichars}
\label{sec: Hamiltonian vector field and bichars}


Let  $(\k, g) \in \X^k$, $k=1$ or $2$.
The principal symbol of the wave operator $P_{\k,g}$ is given by 
\begin{equation}
  \label{eq: def p}
  p(t,x, \tau, \xi)  = p_{\k,g}(t,x, \tau, \xi) = - \tau^2 + |\xi|_x^2 , \qquad 
  (t,x, \tau, \xi) \in  T^*\L .
\end{equation}
In local charts, one has
\begin{equation*}
  p (t,x, \tau, \xi) = -\tau^2 + \sum_{1\leq i,j\leq d} g^{ij}(x) \xi_i \xi_j.
\end{equation*}
Note that $(g^{ij}(x))_{i,j}$ is the inverse of $(g_{ij}(x))_{i,j}$,
the latter being the local representative of the metric.

We denote by $\Hp$ the Hamiltonian vector field associated with $p$, that is, the unique vector field
such that $ \{p, f\} = \Hp f$ for any smooth function $f$. 
Here, $\{.  ,.\}$ denotes the Poisson bracket, that is, in local chart
\begin{align}
  \{ p, f\} =\d_\tau p \ \d_t f - \d_t p\  \d_\tau f + 
  \sum_{1\leq j \leq d} ( \d_{\xi_j}p \ \d_{x_j}f - \d_{x_j}p\  \d_{\xi_j}f ),
\end{align}
yielding 
\begin{align*}
  \Hp = -2 \tau  \d_t + \nabla_\xi p \cdot \nabla_x
  - \nabla_x p \cdot \nabla_\xi,
\end{align*}
as $p$ is in fact  independent of the time variable $t$. The
Hamiltonian vector field $\Hp$ is of class $\Con^{k-1}$.
Observe that, for a function $f$ of the variables $(t,x,\tau,\xi)$,
one has 
\begin{align*}
  \transp{\Hp} f = 2 \tau \d_t f  - \div_x (f \nabla_\xi p) 
  +\div_\xi  (f \nabla_x p), 
\end{align*}
with which one deduces 
\begin{align}
  \label{eq: transp Hp}
  \transp{\Hp}  = - \Hp,
\end{align}
even in the case $(\k, g) \in \X^1$. 

\bigskip
First, consider the case $k =2$. Thus, $\Hp$ is a $\Con^1$-vector field. 
For $\y \in T^*\L$ one denotes by $s \mapsto \phi_s(\y)$
the unique maximal solution to 
\begin{equation}
  \label{eq: hamiltonian flow}
  \frac{d}{ds}\phi_s(\y) = \Hp \phi_s(\y), \ \  s\in \R, \quad 
  \text{and} \ \ \phi_{s=0}(\y) = \y,
\end{equation}
as given by the Cauchy-Lipschitz theorem.
One calls $(s,\y) \mapsto \phi_s(\y)$ the Hamiltonian flow map.
Let $s \mapsto \gamma(s)$ be an integral curve of $\Hp$, that is,
$\gamma(s) = \phi_s(\y)$ for some $\y\in T^*\L$. For any smooth
function $f$ on $T^*\L$ one has
\begin{align*}
\frac{d}{d s}  f \circ \gamma(s) = \Hp f \big(\gamma(s)\big),
\end{align*}
Note that $\Hp \tau=0$, meaning that the variable $\tau$ is
constant along $\gamma$.
Note also  that the value of $p$ remains constant along $\gamma$
since $\Hp p = \{ p, p\}=0$. Hence, $|\xi|_x^2 = g_x(\xi,\xi)$ is also
constant. Thus, if $\gamma(0) \in S^*\L$ then $\gamma(s)$ remains in
$S^*\L$, and for $\y\in S^*\L$, the  vector field $\Hp$ at $\y$ is
tangent to $S^*\L$. Consequently, we may  consider $\Hp$ as a tangent vector
field on the $\Con^2$-manifold $S^*\L$.
In particular $\Hp a$ makes sense  if
$a \in \Con^{1}_c (S^*\L)$. If moreover $a \in \Con^{2+ \ell}_c
(S^*\L)$, $\ell \geq 0$, one has $\Hp a \in \Con^{1}_c (S^*\L)$.

Since $\Hp p=0$, the flow $\phi_s$ preserves $\Char(p)= p^{-1} (\{0\})$,
the characteristic set of $p$.  As is done classically, we call
\bichar an integral curve for which $p=0$. Observe then that~\eqref{eq:
  hamiltonian flow} defines a flow on the $\Con^2$-manifold
$$
\Char(p) \cap S^*\L = \{(t, x, \tau, \xi);\ \tau^2 = 1/2 \ \text{and} \ |\xi|_x^2 = 1/2\} .
$$

\bigskip Second, consider the case $k=1$. Then $\Hp$ is only a
continuous vector field. Thus, for any $\y \in \Char(p)$ there exists
a maximal \bichar $s \mapsto \gamma(s)$ defined on $\R$ such that
$\gamma(0) = \y$, that is,
\begin{align*}
  \frac{d}{d s} \gamma(s) = \Hp\big(\gamma(s)\big), \qquad s \in \R,
\end{align*}
by the Cauchy-Peano theorem. Uniqueness is however not guaranteed
and the notion of flow cannot be used in the case $k=1$. Since the
value of $|\xi|_x$ remains constant and  the manifold $\M$ is compact,  maximal bicharacteristics are actually defined {\em globally}.

As above, if $\gamma(0) \in S^*\L$ (\resp $\Char(p) \cap S^*\L$) one
has $\gamma(s) \in  S^*\L$ (\resp $\Char(p) \cap S^*\L$) for all
$s \in \R$. The hamiltonian vector field $\Hp$ can be viewed as a
$\Con^0$-vector field on the $\Con^1$-manifold $S^*\L$ (\resp on the
$\Con^1$-manifold $\Char(p) \cap S^*\L$).  For
$a \in \Con^{1+\ell}_c (S^*\L)$, $\ell\geq 0$, one finds $\Hp a \in \Con^0_c (S^*\L)$.

\bigskip
Finally, connection between \bichar and geodesics can be made.
For this we recall that if $\xi \in T_x^*\M$ for some $x \in \M$ one can define
$v \in T_x\M$ by $v = \xi^\sharp$, which  reads in local coordinates
$v^i = \sum_j g^{ij}(x) \xi_j$. In particular $|v|_x^2= g_x(v,v) = |\xi|_x$.
If now $\y^0 = (t^0, x^0, \tau^0, \xi^0 )\in \Char(p) \cap S^*\L$ and
letting $s \mapsto \y(s) = \big(t(s), x(s), \tau, \xi(s) \big)$ be a
\bichar such that $\y(0) = \y^0$. One has
$\tau = \tau^0$ and $t(s) = t^0 - 2 \tau^0 s$. The map
\begin{align*}
  X: t \mapsto x \big( (t^0 - t)/ (2 \tau^0)\big),
\end{align*}
can be proven to be the geodesic originating from $x^0$ in the
direction given by $v^0 = (\xi^0)^\sharp$ and parametrized
by $t$.

We now compute the speed at which the geodesic is traveled.
We have $\frac{d X}{ d t} (t) = \frac{-1}{2 \tau^0} \frac{d
  x(s)}{ d s}$, which yields
\begin{align*}
 \frac{d X}{ d t} (t)  = \frac{-1}{2 \tau^0} \nabla_\xi p \big(x(s), \xi(s)\big)
  = - \frac{\xi(s)^\sharp}{\tau^0}.
\end{align*}
It follows that
\begin{align*}
  |\frac{d X}{ d t} (t)|_x = |\xi(s) ^\sharp|_x /|\tau^0|
  = |\xi(s)|_x /|\tau^0| = |\xi^0|_x /|\tau^0| 
  =1,
\end{align*}
since $\y^0 \in \Char(p)$. 
Hence, the projection of the \bichar $s \mapsto \gamma(s)$ yields a
geodesic traveled at speed 1.

\subsection{Geometric control condition}
\label{sec: Geometric control condition}

As the projections of \bichars onto  $\L$ yield geodesics,
in the case $k\geq 2$, we can
state the Rauch-Taylor geometric control condition
\cite{Rauch-Taylor} formulated in  Definition~\ref{def: Geometric
  control condition-thm} with the notion of Hamiltonian flow
introduced above.
\begin{definitionbis}[geometric control condition, $k \geq 2$]
  \label{def: Geometric control condition-bis}
  Let $g$ be a $\Con^2$ metric 
  and let $\omega$ be an open set of $\M$ and $T>0$. One says that
  $(\omega, T)$ fulfills the geometric control condition if
  for all $\y  \in \Char(p)$
 one has $\pi \big(\phi_s(\y)\big) \in (0, T) \times \omega$
  for some $s \in \R$.  
\end{definitionbis}

In the case $k=1$,  since  $g$ is only $\Con^1$ there
is no flow in general, one rather writes the geometric control condition by means
of maximal \bichars.
\begin{definitionter}[generalized geometric control condition, $k =1$]
  \label{def: Geometric control condition-ter}
  Let $g$ be a $\Con^1$ metric and let $\omega$ be an open set of $\M$ and $T>0$. One says that
  $(\omega, T)$ fulfills the geometric control condition if
  for {\em any} maximal \bichar $s \mapsto \gamma(s)$ in $\Char(p)$  
 one has $\pi \big( \gamma(s)\big) \in (0, T) \times \omega$
  for some $s \in \R$.  
\end{definitionter}

In other words, for all $\y \in \Char(p)$, all \bichars
that go through $\y$ meet the cotangent bundle above
$(0, T) \times \omega$.

Naturally, Definitions~\ref{def: Geometric control condition-bis} and \ref{def: Geometric control condition-ter} coincide in the case $k = 2$ because
of the uniqueness of a \bichar going through a point of  $\Char(p)$.

\subsection{Symbols and pseudo-differential operators}
\label{sec: measures symbols operators}

Here, we follow \cite[Section 1.1]{Bu97} for the notation. We denote
by $H^k(X )$ or $H^k_{\loc}(X )$, with $X = \M$ or $ \L$, the usual
Sobolev space for complex valued functions, endowed with its natural
inner product and norm. In particular, the $L^2(X )$-inner product is
denoted by $(.,.)_{L^2(X )}$.

Classical polyhomogeneous symbol classes on $T^* \R^n\simeq \R^n\times \R^n$ are denoted by
$S_{\phg}^m(\R^n\times \R^n)$ and the classes of associated operators
by $\Psi_{\phg}^m(\R^n)$.  We recall that symbols
in the class $S_{\phg}^{m}(\R^n\times \R^n)$ behave well
with respect to changes of variables, up to symbols in
$S_{\phg}^{m-1}(\R^n\times \R^n)$; see \cite[Theorem~18.1.17 and Lemma~18.1.18]{Hormander-III}.

We define $S_{c, \phg}^{m}(T^*\L)$  as the set of  polyhomogeneous symbols of order $m$
on $T^*\L$ with compact support in the variables $(t,x) \in \L$ (note
that compactness with respect to $x \in \M$ is obvious). Having the
manifold $\M$ smooth is important for symbols and  following
pseudo-differential operators to be simply defined. 

For any $m$, the restriction to the sphere
\begin{equation}
  \label{eq: identification symbols}
  S_{c, \phg}^{m}(T^*\L) \to \Cinfc(S^*\L), \quad a \to a_{|S^*\L},
\end{equation}
is onto. This allows one to identify a homogeneous symbol with a
smooth function on $S^*\L$ with compact support.

We denote by $\Psi_{c, \phg}^{m}(\L)$  the space of polyhomogeneous pseudo-differential operators of order
$m$ on $\L$: one says that $Q \in \Psi_{c, \phg}^{m}(\L)$ if $Q$ maps
$\Cinfc(\L)$ into $\mathscr D' (\L)$ and 
\begin{enumerate}
  \item its kernel $K(x,y) \in \D'(\L \times \L)$ is such that 
    $\supp (K)$ is compact in $\L \times\L$;
  \item $K(x,y)$ is smooth away from the diagonal $\Delta_{\L} = \{
    (t,x;t,x);\ (t,x) \in \L\}$;
  \item for any local chart  $\mathcal C_j = (\O_j, \vartheta_j)$ and all $\phi_0$, $\phi_1
    \in \Cinfc(\tO_j)$
    one has 
    \begin{align*}
    \phi_1 \circ\big(\vartheta_j^{-1}\big)^\ast \circ Q \circ \vartheta_j^\ast
      \circ \phi_0
      \in \Op \big(S_{c, \phg}^{m}(\R^{d+1}\times\R^{d+1})\big).
   \end{align*}
\end{enumerate}

For $Q\in \Psi_{c, \phg}^{m}(\L)$, we denote by
$\sigma_m(Q) \in S_{c, \phg}^{m}(T^* \L)$ the
principal symbol of $Q$; see~\cite[Chapter 18.1]{Hormander-III}. Note
that the principal symbol is uniquely defined in $S_{c, \phg}^{m}(T^* \L
)$ because of the polyhomogeneous structure (see the
remark following Definition~18.1.20 in~\cite{Hormander-III}).  The
application $\sigma_m$ enjoys the following properties.
\begin{enumerate}
\item The map $\sigma_m: \Psi_{c, \phg}^{m}(\L) \to
  S_{c, \phg}^{m}(T^* \L)$ is onto.
\item For all $Q \in \Psi_{c, \phg}^{m}(\L)$,
  $\sigma_m(Q) = 0$ if and only if $Q \in \Psi_{c, \phg}^{m-1}(\L)$.
\item For all $Q\in \Psi_{c, \phg}^{m}(\L)$,
  $\sigma_m(Q^*) = \overline{\sigma_m(Q)}$.
\item For all $Q_1 \in \Psi_{c, \phg}^{m_1}(\L)$ and
  $Q_2 \in \Psi_{c, \phg}^{m_2}(\L)$, one has $Q_1 Q_2
  \in \Psi_{c, \phg}^{m_1+m_2}(\L)$ with
  \begin{align*}
  \sigma_{m_1 +m_2}(Q_1 Q_2) = \sigma_{m_1}(Q_1)\sigma_{m_2}(Q_2).
  \end{align*}
\item For all $Q_1 \in \Psi_{c, \phg}^{m_1}(\L)$ and $Q_2 \in
  \Psi_{c, \phg}^{m_2}(\L)$, one has $[Q_1 ,Q_2] = Q_1 Q_2 - Q_2
  Q_1 \in \Psi_{c, \phg}^{m_1+m_2 - 1}(\L)$ with
  \begin{align*}
  \sigma_{m_1 +m_2 - 1}([Q_1 ,Q_2] ) = \frac{1}{i} \{ \sigma_{m_1}(Q_1) , \sigma_{m_2}(Q_2) \}.
  \end{align*}
 
\item If $Q \in \Psi_{c, \phg}^{m}(\L)$, then $Q$ maps
  continuously $H^k_{\loc}(\L)$ into $H_{\comp}^{k-m}(\L)$. In
  particular, for $m < 0$, $Q$ is compact on $L_{\loc}^2(\L)$.
\end{enumerate}

\medskip
Given an operator $Q \in \Psi^m_{c, \phg}(\L)$, one sets
\begin{align*}
  \Char(Q) = \Char\big(\sigma_m(Q)\big)
  = \{\y \in T^*\L , \sigma_m(Q)(\y) = 0\}.
\end{align*}

\section{Microlocal defect measure and propagation properties}
\label{sec: mdm and pde}
A defect measure is used to characterize locally the failure of a
sequence to strongly converge, meaning some concentration
phenomenon. This characterization can be made finer by further considering
microlocal concentration phenomena. 

\subsection{Microlocal defect density measures}
We define $\M_+ (S^*\L)$ as the set of positive
density measures on $S^*\L$.  For
$\mu \in \M_+ (S^*\L)$ and $a \in \Con_c^0(S^*\L)$, we shall write
\begin{align*}
\dup{\mu}{a}_{S^*\L} = \int_{S^*\L} a(\y) \mu (d \y) ,
\end{align*}
for the duality bracket. This notation will also be used for $a
\in S_{c, \phg}^0(T^*\L)$  according to the identification map~\eqref{eq:
  identification symbols}.

\medskip
Consider a sequence $(u^k)_{k \in \N}\subset L^2_{\loc}(\L)$ that
converges weakly to $0$.  Here, to define the $L^2$-norm and inner
product on $\L$ we use a fixed
$(\k^0,g^0)$  chosen in $\X^1(\M)$; see \eqref{eq: L2-inner product and norm}.

As a consequence of \cite[Theorem 1]{Ge91}, there exists a subsequence
of $(u^k)_{k \in  \N} $ (still denoted by $(u^k)_{k \in \N}$ in what
follows) and a density measure
$\mu  \in \M_+(S^*\L)$, 
such that 
\begin{equation}
  \label{eq: exists mdm mu 0}
  \lim_{k \to \infty} \dup{Q u^k}{\ovl{u^k}}_{L^2_{\comp}(\L), L^2_{\loc}(\L)} 
  =  \dup{\mu}{\sigma_0(Q)}_{S^*\L},
\end{equation}
for any $Q \in \Psi^0_{c, \phg}(\L)$.
Recall that symbols in $S^0_{c, \phg}(T^* \L)$ are compactly supported in time $t$
here. 
We also refer to \cite{Tartar90} and \cite{Bu97}. One
calls $\mu$ a microlocal defect (density) measure associated with
$(u^k)_{k\in \N}$. 


Similarly, one can use the notion of $H^1$-microlocal defect
density measure. Consider $(u^k)_{k \in \N}\subset H^1_{\loc}(\L)$ that
converges  weakly to $0$.  Then, there exists a subsequence of
$(u^k)_{k \in  \N} $ (still denoted by $(u^k)_{k \in \N}$ ) and a density
measure $\mu  \in \M_+(S^*\L)$
such that for any $Q \in \Psi^2_{c, \phg}(\L)$ 
\begin{equation}
  \label{eq: exists mdm mu 1}
  \lim_{k \to \infty} \dup{Q u^k}{\ovl{u^k}}_{H_{\comp}^{-1}(\L), H_{\loc}^1(\L)} 
  =  \dup{\mu}{\sigma_2(Q)}_{S^*\L}.
\end{equation}

Naturally, in either cases, the density measure $\mu$ depends on the choice
made of $(\k^0,g^0) \in \X^1(\M)$. In what follows we shall make clear
what choice is made.

\subsection{Local representatives}
\label{sec: Local representatives of the measure}
Consider a finite atlas $\Atlas = (\mathcal C_j)_{j \in J}$ on $\L$,
as introduced in Section~\ref{sec: Geometric facts}, with
$\mathcal C_j = (\O_j, \vartheta_j)$. Consider a smooth partition of
unity $(\chi_j)_{j\in J}$ subordinated to the covering by the open
sets $(\O_j)_j$.  We consider also $\tchi_j, \hchi_j \in \Cinf(\L)$
supported in $\O_j$ such that $\tchi_j \equiv 1$ on a \nhd of
$\supp(\chi_j)$ and $\hchi_j \equiv 1$ on a \nhd of $\supp(\tchi_j)$.
Set also $\chi_j^{\mathcal C_j} = (\vartheta_j^{-1})^* \chi_j$,
$\tchi_j^{\mathcal C_j}= (\vartheta_j^{-1})^* \tchi_j$, and
$\hchi_j^{\mathcal C_j}= (\vartheta_j^{-1})^* \hchi_j$.  One has
$\chi_j^{\mathcal C_j}, \tchi_j^{\mathcal C_j}, \hchi_j^{\mathcal C_j}
\in \Cinfc(\tO_j)$, with $\tO_j = \vartheta_j(\O_j)$.

Let $(u^k)_k \subset H^1_{\loc}(\L)$ that converges weakly to $0$, 
$Q \in \Psi_{c, \phg}^{2}(\L)$, and $j \in J$. 
One can write
\begin{align*}
  \chi_j Q = \chi_j Q \tchi_j  + \chi_j Q (1 - \tchi_j).
\end{align*}
Since $\chi_j Q (1 - \tchi_j)$ is a regularizing operator one finds
\begin{align*}
  \dup{\mu}{ \chi_j \sigma_2(Q)}_{S^*\L}
  \ \ &\mathop{\sim}\ \ 
  \dup{\chi_j Q u^k}{\ovl{u^k}}_{H_{\comp}^{-1}(\L), H_{\loc}^1(\L)}\\
   &\mathop{\sim}\ \ 
  \dup{\chi_j \tchi_j Q  \tchi_j v _j^k}{\ovl{v_j^k}}_{H_{\comp}^{-1}(\L),H_{\loc}^1(\L)}, 
  \qquad \text{as} \ k \to +\infty ,
\end{align*}
for $v_j^k = \hchi_j u^k$.

The operator
$Q_j = (\vartheta_j^{-1})^* \tchi_j Q \tchi_j (\vartheta_j)^*$ is a
pseudo-differential operator of order $2$ on $\R^{d+1}$ with principal
symbol $q_j = \tchi_j^2 q^{\mathcal C^j}$, where $q^{\mathcal C^j}$  is
the local representative of $\sigma_2(Q)$.
Set also  $v^{k,\mathcal C_j}_j=  (\vartheta_j^{-1})^*   v^k_j$. It converges weakly to $0$ in
$H^1(\R^{d+1})$. Associated with this sequence is a microlocal defect
measure $\mu_j$. If one writes 
\begin{align*}
  \dup{\chi_j \tchi_j Q  \tchi_j v_j^k}{\ovl{v_j^k}}_{H_{\comp}^{-1}(\L),H_{\loc}^1(\L)}
  = \dup{\chi^{\mathcal C^j}_j Q_j v^{k,\mathcal C_j}_j}  {\ovl{v^{k,\mathcal C_j}_j}}_{H_{\comp}^{-1}(\R^{d+1}),H_{\loc}^1(\R^{d+1})},
\end{align*}
one obtains
\begin{align*}
  \dup{\mu}{ \chi_j \sigma_2(Q)}_{S^*\L}
  &= \dup{\mu_j}{\chi^{\mathcal C^j}_j q_j}_{S^* \tilde{O}^{j}}
    = \dup{\mu_j}{\chi^{\mathcal C^j}_j q^{\mathcal C^j}}_{S^* \tilde{O}^{j}}.
\end{align*}
Note that here, the $L^2$ and $H^s$-norms on $\R^{d+1}$ are based on
the local representative of the  density measure $\k^0 \mu_{g^0} dt$.
One thus sees that the local representative of $\chi_j \mu$ is precisely
$\chi_j^{\mathcal C^j} \mu_j$, that is,
$\chi_j \mu = \vartheta_j^* \big(\chi_j^{\mathcal C^j} \mu_j \big) =
\chi_j \vartheta_j^*\mu_j$. 
Summing up, we thus have
\begin{align*}
  \mu = \sum_{j\in J} \chi_j  \mu = \sum_{j\in J} \chi_j \vartheta_j^*\mu_j.
\end{align*}
and
\begin{align*}
  \dup{\mu}{\sigma_2(Q)}_{S^*\L}
  &= \sum_{j\in J} \dup{\mu}{ \chi_j \sigma_2(Q)}_{S^*\L}
    =\sum_{j\in J} \dup{\mu_j}{\chi^{\mathcal C^j}_j q^{\mathcal C^j}}_{S^* \tilde{O}^{j}}.
\end{align*}

\begin{remark}\label{local-global}
  Local properties of microlocal defect measures like $\mu$ can be
  deduced from the properties of $\chi_j^{\mathcal C^j} \mu_j$.  In
  what follows most results are of local nature. In such cases we
  shall work in local charts and use Sections~\ref{sec: measures
    symbols operators} and \ref{sec: Local representatives of the
    measure} to bring the analysis to open domains of $\R^ {d+1}$.
\end{remark}
 
\subsection{Operators with a low regularity}

An important tool we use to handle low regularity terms in what follows is a
result due to R.~Coifman and Y.~Meyer (see \cite[Proposition
IV.7]{Coifman-Meyer}) and some of its consequences that we list below.
\begin{theorem}[Coifman-Meyer]\label{C-M}
  Let $Q \in \Psi_{\phg}^1(\R^n \times \R^n)$. If
  $m \in W^{1,\infty}(\R^n)$ the
  commutator $[Q, m]$ maps $L^2(\R^n)$
  into itself continuously. Moreover there exists $C>0$ such that 
  \begin{align*}
    \bigNorm{[Q,m]}{L^2\rightarrow L^2} \leq C \Norm{m}{W^{1,\infty}},
    \qquad m \in W^{1,\infty}(\R^n).
  \end{align*}
 \end{theorem}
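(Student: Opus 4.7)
The plan is to reduce to a direct kernel analysis on $\R^n$ and then invoke a standard $L^2$-boundedness criterion for Calder\'on--Zygmund singular integrals. Since $Q \in \Psi^1_{\phg}(\R^n\times\R^n)$, its Schwartz kernel $K(x,y)$ is smooth off the diagonal and satisfies
\begin{equation*}
  |\partial_x^\alpha \partial_y^\beta K(x,y)| \leq C_{\alpha,\beta}\, |x-y|^{-n-1-|\alpha|-|\beta|}, \qquad x \neq y,
\end{equation*}
inherited from the symbol class $S^1_{\phg}$. By a cutoff argument (replacing $Q$ by $\chi_1 Q \chi_2$ modulo a smoothing remainder) one may assume $K$ has compact support, and similarly that $m$ is compactly supported, since commutators with a smooth cutoff are trivially bounded.

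The Schwartz kernel of $[Q,m]$ is then $\widetilde K(x,y) = (m(y) - m(x))\,K(x,y)$. Using the Lipschitz bound $|m(y)-m(x)| \leq \Norm{\nabla m}{L^\infty}|x-y|$, one obtains $|\widetilde K(x,y)| \leq C \Norm{\nabla m}{L^\infty} |x-y|^{-n}$ together with the corresponding first-order gradient bounds $|\nabla_{x,y}\widetilde K(x,y)| \leq C\Norm{\nabla m}{L^\infty}|x-y|^{-n-1}$. Thus $\widetilde K$ is a standard Calder\'on--Zygmund kernel of order $0$, with constants linear in $\Norm{m}{W^{1,\infty}}$. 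To upgrade these kernel estimates to an $L^2$-bound, I would invoke the David--Journ\'e $T(1)$ theorem: verify the weak boundedness property (immediate from the size estimate by scaling) and that $[Q,m]\mathbf{1},\ [Q,m]^\ast\mathbf{1} \in \mathrm{BMO}$ with norms controlled by $\Norm{m}{W^{1,\infty}}$. An equivalent and perhaps more transparent route is a Littlewood--Paley decomposition $m = \sum_{j\geq 0}\Delta_j m$ with $\Norm{\Delta_j m}{L^\infty} \lesssim 2^{-j}\Norm{\nabla m}{L^\infty}$: each $\Delta_j m$ is smooth, so the standard symbolic calculus gives $[Q,\Delta_j m] \in \Psi^0$ with symbol seminorms bounded uniformly in $j$ by a constant times $\Norm{\nabla m}{L^\infty}$, and one sums the pieces using the quasi-orthogonality of Littlewood--Paley blocks combined with a Cotlar--Stein estimate.

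The main obstacle is precisely this last $L^2$-boundedness step. A naive application of symbolic calculus would produce the would-be principal symbol $\tfrac{1}{i}\,\partial_\xi q\cdot\nabla_x m$, which lies in $S^0$ in $\xi$ but only in $L^\infty$ in $x$; classical Calder\'on--Vaillancourt type statements then fail to give a bound proportional to $\Norm{m}{W^{1,\infty}}$. One has to genuinely exploit the commutator cancellation (the vanishing of the kernel on the diagonal), either through Bony's paradifferential decomposition $m\,\cdot = T_m + T_{\,\cdot}\,m + R(m,\cdot)$, through the $T(1)$ theorem, or through the Cotlar--Stein summation over frequency bands sketched above. This structural cancellation, which turns a would-be $S^1$ symbol into a genuine order-$0$ operator, is the heart of the Coifman--Meyer result.
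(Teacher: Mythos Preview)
The paper does not prove this theorem: it is quoted as a known result of Coifman and Meyer \cite[Proposition~IV.7]{Coifman-Meyer} and then used as a black box in Corollary~\ref{cor: peu-reg} and Proposition~\ref{prop: peu-reg-2}. There is thus no proof in the paper to compare your proposal against.

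Your sketch points in the right direction but contains genuine gaps. First, the reductions are not justified as written: $Q-\chi_1 Q\chi_2$ is not smoothing unless the cutoffs are suitably nested, and replacing $m$ by a cutoff changes the commutator by a term you have not controlled. More importantly, the weak boundedness property is \emph{not} ``immediate from the size estimate by scaling'': the off-diagonal bound $|\widetilde K(x,y)|\lesssim |x-y|^{-n}$ says nothing near the diagonal, and the corresponding double integral against bump functions diverges without further cancellation. Finally, your Littlewood--Paley claim that $[Q,\Delta_j m]\in\Psi^0$ with symbol seminorms uniform in $j$ is false: the $x$-derivatives $\partial_x^\alpha\Delta_j m$ grow like $2^{j(|\alpha|-1)}\Norm{\nabla m}{L^\infty}$ for $|\alpha|\geq 1$, so the full symbolic expansion of the commutator does not give uniform $S^0$ control. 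The standard dyadic route instead decomposes $Q=\sum_j Q_j$ in the $\xi$-variable, bounds each $[Q_j,m]$ on $L^2$ by $C\Norm{\nabla m}{L^\infty}$ via a direct kernel estimate (the kernel of $Q_j$ is essentially $2^{j(n+1)}\phi(2^j(x-y))$ with $\phi$ Schwartz), and sums via Cotlar--Stein using the frequency orthogonality of the $Q_j$; this, or the equivalent paraproduct decomposition, is what Coifman and Meyer actually do.
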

 We deduce the following corollary.
 \begin{corollary}
   \label{cor: peu-reg}
   Let $Q \in \Psi_{\phg}^1(\R^n \times \R^n)$ be such that its kernel
   has compact support in $\R^n \times \R^n$. With $q \in
   S_{\phg}^{1}(\R^n \times \R^n)$ its principal symbol.
    
   Let $m \in \Con^1(\R^n)$.  There exist $K_1$ and
   $K_2$, compact operators on $L^2(\R^n)$, with compactly supported
    kernels, such that 
    \begin{align}
      \label{eq: cor peu-reg}
      [Q, m] 
      = \frac{1}{i} \nabla_{x}m \cdot \Op (\nabla_{\xi} q)
      +   K_1
      = \frac{1}{i} \Op (\nabla_{\xi} q) \cdot \nabla_{x}m 
      +   K_2.
    \end{align}
\end{corollary}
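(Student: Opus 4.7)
My plan is to mollify $m$ and apply the classical smooth symbol calculus to the approximation, then pass to the operator-norm limit using the Coifman--Meyer estimate. Since the kernel of $Q$ is compactly supported in $\R^n\times\R^n$, the commutator $[Q,m]$ only sees the values of $m$ on a fixed compact set, so we may assume $m \in \Con^1_c(\R^n)$. Let $\rho_\eps$ be a standard mollifier and set $m_\eps = \rho_\eps * m \in \Cinfc(\R^n)$; then $m_\eps \to m$ in $W^{1,\infty}(\R^n)$, i.e., $m_\eps \to m$ and $\nabla m_\eps \to \nabla m$ uniformly on $\R^n$.

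For each $\eps>0$ the classical smooth symbol calculus gives
\begin{equation*}
  [Q,m_\eps] = \tfrac{1}{i}\,\Op\!\bigl(\{q,m_\eps\}\bigr) + R_\eps
             = \tfrac{1}{i}\,\Op\!\bigl(\nabla_\xi q \cdot \nabla_x m_\eps\bigr) + R_\eps ,
\end{equation*}
with $R_\eps \in \Psi^{-1}_{\phg}(\R^n\times\R^n)$; the kernel supports of $R_\eps$ may be arranged to lie in a fixed compact set depending only on $\supp K_Q$ and $\supp m$, so each $R_\eps$ is compact on $L^2$. The Poisson bracket reduces to $\nabla_\xi q\cdot\nabla_x m_\eps$ because $m_\eps$ is independent of $\xi$, and since $\nabla_x m_\eps$ depends only on $x$ it factors cleanly out of the left quantization, so that $\Op(\nabla_\xi q\cdot\nabla_x m_\eps) = \nabla_x m_\eps\cdot\Op(\nabla_\xi q)$. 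Hence
$R_\eps = [Q,m_\eps] - \tfrac{1}{i}\nabla_x m_\eps \cdot \Op(\nabla_\xi q)$. By Theorem~\ref{C-M}, $\Norm{[Q,m_\eps-m]}{L^2\to L^2} \leq C\Norm{m_\eps-m}{W^{1,\infty}} \to 0$, and since $\Op(\nabla_\xi q)\in\Psi^0_{\phg}$ is $L^2$-bounded with $\nabla m_\eps\to\nabla m$ in $L^\infty$, also $\nabla_x m_\eps\cdot\Op(\nabla_\xi q) \to \nabla_x m\cdot\Op(\nabla_\xi q)$ in operator norm. Therefore $R_\eps$ converges in operator norm to $K_1 := [Q,m] - \tfrac{1}{i}\nabla_x m\cdot\Op(\nabla_\xi q)$, which is compact as a norm-limit of compacts. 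Choosing a cutoff $\chi\in\Cinfc(\R^n)$ equal to $1$ on $\supp m$ and on the spatial footprint of $\supp K_Q$ allows one to replace $\Op(\nabla_\xi q)$ by $\chi\,\Op(\nabla_\xi q)\,\chi$, absorbing the resulting smoothing error into $K_1$ and ensuring that $K_1$ has compactly supported kernel; this proves the first identity.

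For the second identity, one writes $\tfrac{1}{i}\nabla_x m \cdot \Op(\nabla_\xi q) - \tfrac{1}{i}\Op(\nabla_\xi q)\cdot\nabla_x m = \tfrac{1}{i}[\nabla_x m, \Op(\nabla_\xi q)]$ and sets $K_2 := K_1 + \tfrac{1}{i}[\nabla_x m,\Op(\nabla_\xi q)]$, so it suffices to show that $[\nabla_x m, \Op(\nabla_\xi q)]$ is compact on $L^2$ with compactly supported kernel. This follows from the same approximation scheme: mollify the continuous compactly supported function $\nabla_x m$ by $v_\eps\to\nabla_x m$ in $L^\infty$; for each smooth $v_\eps$ the commutator $[\Op(\nabla_\xi q), v_\eps]\in\Psi^{-1}_{c,\phg}$ is compact with compact kernel by the classical calculus, and the limit $\eps\to 0^+$ is taken in operator norm using the $L^2$-boundedness of $\Op(\nabla_\xi q)$. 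The key analytic obstacle is precisely the low regularity of $m$, which prevents a direct application of the smooth calculus to $[Q,m]$; the Coifman--Meyer commutator estimate, together with the fact that $\nabla_x m_\eps$ factors trivially out of the left quantization, is what allows the limit to be controlled in operator norm and the compactness to be preserved.
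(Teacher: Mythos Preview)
Your proof is correct and follows essentially the same route as the paper: approximate $m$ by smooth functions in the $\Con^1$-topology, apply the classical commutator expansion, and pass to the limit in operator norm using the Coifman--Meyer estimate (Theorem~\ref{C-M}) together with the closedness of compact operators. The paper's proof is nearly identical, differing only in that it does not spell out the reduction to compactly supported $m$ and, for the second identity in~\eqref{eq: cor peu-reg}, simply says ``similarly'' rather than writing $K_2=K_1+\tfrac{1}{i}[\nabla_x m,\Op(\nabla_\xi q)]$ and proving this extra commutator compact by a separate $L^\infty$-approximation of $\nabla_x m$. Your handling of the second identity is a mild shortcut; both approaches are equally valid.
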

\begin{proof}
Consider a sequence $(m^k)_{k\in \N}
\subset \Cinf(\R^n)$ such that
\begin{align*}
  \sum_{|\alpha|\leq 1} \Norm{\d_x^\alpha (m^k - m)}{L^\infty}  \ \
  \to \ \ 0\qquad \text{as} \ k \to +\infty.
\end{align*}
Classical symbolic calculus gives
\begin{align}
  \label{eq: cor peu-reg-1}
  [Q, m^k]=\frac{1}{i} \nabla_{x}m^k \cdot \Op (\nabla_{\xi} q) + K_1^k,
\end{align}
with $K_1^k = \Op (r_1^k)$ for some $r_1^k \in
S_{\phg}^{-1}$, $j=1,2$. Thus,  $K_1^k$ is  bounded from $L^2(\R^n)$ into
$H^1(\R^n)$. In addition, since $K_1^k$ has a  kernel with
compact supports in $\R^n \times \R^n$, it is compact on  $L^2(\R^n)$.
Note that the support of the  kernel of $K_1^k$ lies in a compact $\mathcal K$ of
$\R^n \times \R^n$ that is uniform with respect to $k$.

On the other hand, observe that
\begin{align*}
  \nabla_{x}m^k \cdot \Op (\nabla_{\xi} q)  \ \ \to \ \ \nabla_{x}m \cdot \Op (\nabla_{\xi} q) 
  \ \ \text{in} \ {\mathscr L}(L^2(\R^n)).
\end{align*}
Moreover, from Theorem~\ref{C-M} applied to $m^k - m$, one also has
\begin{align*}
  [Q, m^k] \ \ \to \ \ [Q, m] 
  \ \ \text{in}\  {\mathscr L}(L^2(\R^n)).
\end{align*}
Using  then \eqref{eq: cor peu-reg-1} we deduce that $(K_1^k) _{n\in \N}$
converges to some $K^1$ in ${\mathscr L}(L^2(\R^n))$, and from the closedness of 
the set of compact operators  in  ${\mathscr L}(L^2(\R^n))$ we find that
$K^1$ is compact. Moreover, $K^1$ has a kernel supported in $\mathcal K$.
The limits above give the first equality in \eqref{eq: cor peu-reg}. The second equality follows similarly. 
\end{proof}

\bigskip
Let $\Omega$ be a bounded open set of $\R^n$ and $(\k^0,g^0) \in
\X^1(\Omega)$, with definition adapted from that of $\X^1(\M)$. The
$L^2$-inner product and norm are given by the density $\k^0
\mu_{g^0}$.
The following result is also a consequence of Theorem~\ref{C-M}.
\begin{proposition}
  \label{prop: peu-reg-2}
Let $(u^k)_{k\in \N}\subset H_{\loc}^1(\Omega)$ be a sequence that
converges weakly  to $0$ and let $\mu$ be an
$H^1$-microlocal defect density measure on $S^{*} \Omega$ associated with
the sequence $(u^k)_k$.

Let $b_1 \in W^{1,\infty}(\R^n)$ and $b_2 \in \Con^0(\R^n)$. Consider
also $Q_1, Q_2 \in \Psi_{\phg}^{1}(\R^n)$, both with kernels compactly
supported in $\Omega\times \Omega$, with
$q_1, q_2\in S_{\phg}^{1}(\R^n\times \R^n)$ for respective principal
symbol. Then, one has
\begin{align}
  \label{CM-precise}
  \dup{b_1 Q_1  \, b_2 \, Q_2u^k}{\ovl{u^k}}_{H^{-1}_{\comp} (\Omega), H^1_{\loc}(\Omega)}
  \ \ \mathop{\longrightarrow}_{k\to + \infty}  \dup{\mu}{b_1 b_2 q_1 q_2}_{S^*\Omega}.
\end{align}
More generally, assume that $(b_1^k)_{k \in \N}\subset
W^{1,+\infty}(\R^n)$ and  $(b_2^k)_{k \in \N}\subset
L^\infty(\R^n)$, 
and $(\k_k,g_k)_{k \in \N}\subset \Y(\Omega)$ with 
\begin{align*}
  \Norm{b_1^k - b_1}{W^{1,+\infty}(\R^n)}
  + \Norm{b_2^k - b_2}{L^\infty(\R^n)} 
  + \Norm{(\k_k,g_k) - (\k^0,g^0) }{\Y(\Omega)} \to 0,
  \ \ \text{as} \ \ k \to +\infty.
\end{align*}
 Then
\begin{align}
  \label{CM-precise2}
  \dup{b_1^k Q_1  \, b_2^k\, Q_2u^k}{\ovl{u^k}}_{H^{-1}_{\comp} (\Omega, \k_k \mu_{g_k}), H^1_{\loc}(\Omega, \k_k \mu_{g_k})}
  \ \ \mathop{\longrightarrow}_{k\to + \infty}  \ \  \dup{\mu}{b_1 b_2 q_1 q_2}_{S^*\Omega}.
 \end{align}
\end{proposition}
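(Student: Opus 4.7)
The plan is to prove \eqref{CM-precise} first, using Theorem~\ref{C-M} and Corollary~\ref{cor: peu-reg} to route all commutator arguments through the $W^{1,\infty}$-factor $b_1$, while treating the merely continuous $b_2$ as a bounded multiplier that one approximates in $L^\infty$; then \eqref{CM-precise2} follows by a straightforward perturbation argument based on \eqref{CM-precise}.

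The central algebraic identity is
\begin{equation*}
b_1 Q_1 b_2 Q_2 = Q_1 (b_1 b_2) Q_2 - [Q_1,b_1]\, b_2 Q_2.
\end{equation*}
For the commutator piece, Theorem~\ref{C-M} gives $[Q_1,b_1]\in\mathscr L(L^2)$; since $b_2 Q_2 u^k$ is bounded in $L^2$ and supported in a fixed compact $K\subset\Omega$ (from the kernel supports), $[Q_1,b_1]b_2 Q_2 u^k$ is bounded in $L^2$ and also supported in a compact of $\Omega$. By Rellich-Kondrachov $u^k\to 0$ strongly in $L^2_{\loc}(\Omega)$, so $\dup{[Q_1,b_1]\,b_2 Q_2u^k}{\ovl{u^k}}\to 0$. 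For the main piece, set $b=b_1 b_2\in\Con^0(\R^n)\cap L^\infty(\R^n)$ and, on a compact neighborhood of $K$, approximate it uniformly by $b^\eps\in\Cinfc(\R^n)$ satisfying $\Norm{b-b^\eps}{L^\infty(K)}\leq\eps$. Since $b^\eps$ is smooth, $Q_1 b^\eps Q_2\in\Psi^2_{c,\phg}$ with principal symbol $b^\eps q_1 q_2$, hence by the defining property \eqref{eq: exists mdm mu 1}
\begin{equation*}
\dup{Q_1 b^\eps Q_2u^k}{\ovl{u^k}}\longrightarrow\dup{\mu}{b^\eps q_1 q_2}.
\end{equation*}
The error $Q_1(b-b^\eps)Q_2u^k$ has $H^{-1}$-norm bounded by $C\eps$ uniformly in $k$, since $(b-b^\eps)$ acts as an $L^\infty$-multiplier of norm $\leq\eps$ between $Q_2u^k\in L^2$ (bounded) and $Q_1\colon L^2\to H^{-1}$ continuous. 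Taking $k\to\infty$ first and then $\eps\to 0$ (using uniform convergence of $b^\eps q_1 q_2\to b q_1 q_2$ on the compact support of $q_1 q_2$) yields \eqref{CM-precise}.

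For \eqref{CM-precise2}, write $\k_k\mu_{g_k}=\rho_k\k^0\mu_{g^0}$ with $\rho_k\to 1$ in $W^{1,\infty}(\Omega)$. Then
\begin{equation*}
\dup{Tu^k}{\ovl{u^k}}_{H^{-1}(\k_k\mu_{g_k}),H^1(\k_k\mu_{g_k})}=\dup{\rho_k T u^k}{\ovl{u^k}}_{H^{-1}(\k^0\mu_{g^0}),H^1(\k^0\mu_{g^0})}
\end{equation*}
brings the computation back to the reference $L^2$-structure. Setting $\tb_1^k=\rho_k b_1^k\to b_1$ in $W^{1,\infty}$, decompose
\begin{equation*}
\tb_1^k Q_1 b_2^k Q_2 u^k - b_1 Q_1 b_2 Q_2 u^k
= (\tb_1^k-b_1)Q_1 b_2^k Q_2 u^k + b_1 Q_1 (b_2^k-b_2) Q_2 u^k.
\end{equation*}
The first summand has $H^{-1}$-norm $\to 0$ because multiplication by a $W^{1,\infty}$-function extends to $H^{-1}\to H^{-1}$ continuously with norm controlled by the $W^{1,\infty}$-norm, and $\Norm{\tb_1^k-b_1}{W^{1,\infty}}\to 0$; the second vanishes because $(b_2^k-b_2)$ acts on the $L^2$-bounded vector $Q_2u^k$ with vanishing $L^\infty$-norm. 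Combined with \eqref{CM-precise} applied to $(b_1,b_2)$, this yields \eqref{CM-precise2}.

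The main obstacle is the regularity mismatch between $b_1\in W^{1,\infty}$ and $b_2\in\Con^0$: Coifman-Meyer requires $W^{1,\infty}$-regularity for the commutator estimate, and one cannot approximate $b_2$ in $W^{1,\infty}$ by smooth functions. The splitting $b_1 Q_1 b_2 Q_2=Q_1(b_1 b_2)Q_2-[Q_1,b_1]b_2Q_2$ is engineered precisely to confine all commutator arguments to the $W^{1,\infty}$-factor $b_1$, while $b_2$ enters only as a bounded multiplier whose $L^\infty$-approximation is absorbed inside the $\Psi^2$-term via the mdm definition.
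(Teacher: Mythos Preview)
Your proof is correct and follows essentially the same approach as the paper's: both use the Coifman--Meyer commutator estimate (Theorem~\ref{C-M}) to move $b_1$ across $Q_1$, reducing to $Q_1\,(\text{continuous function})\,Q_2$, and then approximate the continuous factor uniformly by smooth functions so that the defining property~\eqref{eq: exists mdm mu 1} of the microlocal defect measure applies. The only cosmetic differences are that the paper phrases the first reduction as ``we may assume $b_1=1$'' (then approximates $b_2$ alone), whereas you carry $b_1b_2$ along as a single continuous function; and for~\eqref{CM-precise2} the paper invokes Lemma~\ref{equivalence-densites} to change the density, while you absorb the Radon--Nikodym factor $\rho_k$ into $\tilde b_1^k=\rho_k b_1^k$ --- the same computation, just repackaged.
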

\begin{remark}
  \label{rem: Lipschitz in prop peu-reg-2}
Note that $b_1$ is chosen in $W^{1,\infty}(\R^n)$ because one cannot multiply
an element in  $H^{-1}$ by a bounded function. One derivative is
needed. 
\end{remark}
\begin{proof}[Proof of Proposition~\ref{prop: peu-reg-2}]
  With Lemma~\ref{equivalence-densites} below we may replace the density $\k_k \mu_{g_k}$ in the
  $L^2$-inner product by $\k^0 \mu_{g^0}$ and thus in the
  $H^{-1}_{\comp}$-$H^1_{\loc}$ duality. 

  We write 
  \begin{align*}
    b_1^k Q_1  \, b_2^k\, Q_2 = 
    b_1 Q_1  \, b_2\, Q_2 + R^k, \qquad 
     R^k = b_1 Q_1  \, (b_2^k-b_2) \, Q_2
    + (b_1^k-b_1) Q_1  \, b_2^k\, Q_2. 
  \end{align*}
  Note that $R^k$ maps $H^1_{\loc} (\Omega)$ into
  $H^{-1}_{\comp} (\Omega)$ continuously. Moreover because of the
  convergences of $b_1^k$ and $b_2^k$, and the boundedness of
  $(u^k)_{k \in \N}$ in $H^1_{\loc} (\Omega)$ one finds that
  $R^k u^k \to 0$ strongly in $H^{-1}_{\comp} (\Omega)$.
  Thus we can write
\begin{align*}
    \dup{ b_1^k Q_1  \, b_2^k\, Q_2  u^k}{\ovl{u^k}}_{H^{-1}_{\comp} (\Omega), H^1_{\loc}(\Omega)}
    \ \ =  
  \dup{ b_1 Q_1  \, b_2 \, Q_2  u^k}{\ovl{u^k}}_{H^{-1}_{\comp} (\Omega), H^1_{\loc}(\Omega)} + o(1)_{k\rightarrow + \infty}
\end{align*}
and  \eqref{CM-precise2} follows if we prove \eqref{CM-precise}.

According to  Theorem~\ref{C-M} the commutator $[b_1, Q_1]$ is bounded on
  $L^2(\Omega)$ implying that  $[b_1, Q_1]  \, b_2\, Q_2u^k$ is bounded in
  $L^2(\Omega)$ yielding
\begin{align*}
    \dup{[b_1, Q_1]  \, b_2 \, Q_2u^k}{\ovl{u^k}}_{H^{-1}_{\comp} (\Omega), H^1_{\loc}(\Omega)}
    = \inp{[b_1, Q_1]  \, b_2 \, Q_2u^k}{u^k}_{L^2(\Omega)}
   \ \ \mathop{\longrightarrow}_{k\to + \infty}  0,
 \end{align*}
since $u^k \to 0$ strongly in $L^2(\Omega)$. 
We may thus assume that $b_1=1$ without any loss of generality.

Let $\eps>0$ and let $b_2^\eps \in \Cinf(\Omega)$ be such that 
$\Norm{b_2  - b_2^\eps}{L^\infty}\leq \eps$. 
Write 
\begin{align*}
  Q_1  \, b_2\, Q_2
  = Q_1  \, b_2^\eps\, Q_2 
  + R^\eps, \qquad 
  R^\eps=  Q_1  \, (b_2 - b_2^\eps)\, Q_2.
  \end{align*}
One has 
$\vert\dup{R^\eps u^k}{\ovl{u^k}}_{H^{-1}_{\comp}
   (\Omega), H^1_{\loc}(\Omega)}\vert \leq C\eps$, and this leads to 
\begin{align}
 \label{eq: proof CM-precise-1} 
  \dup{ Q_1  \, b_2\, Q_2 u^k}{\ovl{u^k}}_{H^{-1}_{\comp}
   (\Omega), H^1_{\loc}(\Omega)} 
 =  
\dup{ Q_1  \, b_2^\eps\, Q_2  u^k}{\ovl{u^k}}_{H^{-1}_{\comp}
   (\Omega), H^1_{\loc}(\Omega)} 
  + 
  o(1)_{\eps \rightarrow 0}+  o(1)_{k\rightarrow + \infty}.
 \end{align}
Since $b_2^\eps$ is smooth, by symbolic calculus one has 
\begin{align}
  \label{eq: proof CM-precise-2} 
   \dup{ Q_1  \, b_2^\eps \, Q_2u^k}{\ovl{u^k}}_{H^{-1}_{\comp}
   (\Omega), H^1_{\loc}(\Omega)} 
    \ \ \mathop{\longrightarrow}_{k\to + \infty}  \ \ 
  \dup{\mu}{ b_2^\eps q_1 q_2}_{S^*\Omega}.
 \end{align}
Finally, since $\dup{\mu}{b_2^\eps q_1 q_2}_{S^*\Omega} \to \dup{\mu}{b_2 q_1
  q_2}_{S^*\Omega}$ as $\eps \to 0$,  with \eqref{eq: proof
  CM-precise-1} and \eqref{eq: proof CM-precise-2}  one concludes that
\eqref{CM-precise} holds. 
\end{proof}
\begin{lemma} 
  \label{equivalence-densites}
  Assume that $\Norm{(\k_k,g_k) - (\k^0, g^0)}{\Y(\Omega)} \to 0$ and
  consider a sequence $(f_k, h_k)_{k \in \N}$ bounded in
  $L_{\comp}^2(\Omega) \oplus L_{\loc}^2(\Omega)$. Then
  \begin{align*}
  \inp{f_k}{h_k}_{L^2 (\Omega,\k_k \mu_{g_k})}  
  =
   \inp{f_k}{h_k}_{L^2 (\Omega)}+ o(1)_{k\rightarrow + \infty}.
  \end{align*}
  If $(f_k, h_k)_{k \in \N}$ is bounded in
  $H^{-1}_{\comp}(\Omega) \oplus H^{1}_{\loc}(\Omega)$
  then 
   \begin{align*}
    \dup{f_k}{\ovl{h_k}}_{H^{-1}_{\comp}(\Omega,\k_k \mu_{g_k}), H^{1}_{\loc}(\Omega, \k_k \mu_{g_k}) }  
  = \dup{f_k}{\ovl{h_k}}_{H^{-1}_{\comp}(\Omega), H^{1}_{\loc}(\Omega)}+ o(1)_{k\rightarrow + \infty}.
   \end{align*}
 \end{lemma}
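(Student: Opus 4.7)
The plan is to reduce both statements to the convergence $\rho_k \to \rho^0$ in $W^{1,\infty}(\Omega)$, where $\rho_k = \k_k (\det g_k)^{1/2}$ and $\rho^0 = \k^0 (\det g^0)^{1/2}$ are the local densities of $\k_k \mu_{g_k}$ and $\k^0 \mu_{g^0}$ respectively. This convergence follows from $\Norm{(\k_k, g_k) - (\k^0, g^0)}{\Y(\Omega)} \to 0$, using the uniform ellipticity of $g^0$ (which persists for $g_k$ close to $g^0$) to ensure that the square root of the determinant inherits the $W^{1,\infty}$-convergence.

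For the first statement, in local coordinates one has
\begin{align*}
\inp{f_k}{h_k}_{L^2(\Omega, \k_k \mu_{g_k})} - \inp{f_k}{h_k}_{L^2(\Omega)}
= \int_\Omega f_k \ovl{h_k} \, (\rho_k - \rho^0) \, dx.
\end{align*}
By Cauchy-Schwarz, using that $f_k$ is supported in a fixed compact $K \Subset \Omega$ and that $(f_k)_k$, $(h_k)_k$ are $L^2$-bounded uniformly on $K$, this difference is bounded by $\Norm{\rho_k - \rho^0}{L^\infty(K)} \Norm{f_k}{L^2} \Norm{h_k}{L^2(K)}$, which tends to zero.

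For the second statement, the key observation is that the $H^{-1}(\Omega, \nu)$-$H^1_0(\Omega, \nu)$ pairing, defined by continuous extension of the $L^2(\Omega, \nu)$ inner product, corresponds at the distributional level to
\begin{align*}
\dup{f}{\ovl{h}}_{H^{-1}(\Omega,\nu), H^1_0(\Omega,\nu)} = \dup{f}{\rho \, \ovl{h}}_{\mathscr D'(\Omega), \Cinfc(\Omega)},
\end{align*}
for $\nu = \rho \, dx$, a formula that is immediate for $f \in L^2$ and extends by density to $f \in H^{-1}$. Introducing a cutoff $\chi \in \Cinfc(\Omega)$ equal to $1$ on a \nhd of $\supp(f_k)$ and subtracting the two pairings gives
\begin{align*}
\dup{f_k}{\ovl{h_k}}_{H^{-1}(\Omega, \k_k \mu_{g_k}), H^1_{\loc}(\Omega, \k_k \mu_{g_k})}
- \dup{f_k}{\ovl{h_k}}_{H^{-1}_{\comp}(\Omega), H^1_{\loc}(\Omega)}
= \dup{f_k}{(\rho_k - \rho^0)\,\chi\, \ovl{h_k}}_{\mathscr D', \mathscr D}.
\end{align*}
To conclude, one invokes the classical multiplier estimate $\Norm{\varphi u}{H^1} \le C \Norm{\varphi}{W^{1,\infty}} \Norm{u}{H^1}$ applied with $\varphi = \rho_k - \rho^0$ and $u = \chi \ovl{h_k}$: since $(h_k)_k$ is bounded in $H^1_{\loc}$ and $\Norm{\rho_k - \rho^0}{W^{1,\infty}} \to 0$, the product $(\rho_k - \rho^0)\chi \ovl{h_k}$ tends to zero in $H^1_0(\Omega)$, and its pairing with the $H^{-1}_{\comp}$-bounded sequence $(f_k)_k$ yields the desired $o(1)$.

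The delicate point is the identification of the weighted $H^{-1}$-$H^1$ duality as a distributional pairing against the \emph{weighted} test function $\rho \ovl{h}$; once this is accepted, the rest is a multiplier argument. Observe that $W^{1,\infty}$-convergence of the density is exactly what is required to propagate convergence through multiplication by an $H^1$-bounded family, Lipschitz regularity being the minimal condition for such an $H^1$-multiplier estimate to hold — which is precisely why the natural topology on the coefficient space is $\Y(\Omega)$.
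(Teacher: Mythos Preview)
Your proof is correct and follows essentially the same route as the paper: the paper's proof is a one-liner observing that $\alpha_k = \rho_k/\rho^0 \to 1$ in the Lipschitz norm, and you spell out the consequences (Cauchy--Schwarz for the $L^2$ case, the $W^{1,\infty}$-multiplier estimate on $H^1$ for the duality case) that the paper leaves implicit. One minor notational point: in your identity $\dup{f}{\ovl{h}}_{H^{-1}(\nu),H^1_0(\nu)} = \dup{f}{\rho\,\ovl{h}}_{\mathscr D',\Cinfc}$ the right-hand side is really the unweighted $H^{-1}$--$H^1_0$ pairing (since $\rho\,\ovl{h}\in H^1_0$, not $\Cinfc$), but this is exactly how you use it afterwards, so the argument is sound.
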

  Here, Lemma~\ref{equivalence-densites} is written in the case of a bounded  open set of the 
 Euclidean space but the same result holds in the case of a compact manifold. 
\begin{proof}
  One has $\mu_{g^0} = (\det g^0)^{1/2} dx$ and $\mu_{g_k} = (\det
  g_k)^{1/2}dx$.
  Therefore  $ \k_k \mu_{g_k} = \alpha_k \k^0 \mu_{g^0}$ with 
  $\alpha_k = \frac{\k_k}{\k^0} \Bigl(\frac{\det g_k}{\det g^0}\Bigr)^{1/2}$
  and $\alpha_k \to 1$ in the Lipschitz norm. 
\end{proof}

\subsection{Measures and partial differential equations}
Microlocal defect measures associated with sequences of solutions of
partial differential equations with smooth coefficients can have
properties such as support localization in the characteristic set and
invariance along the Hamiltonian flow. With the material developed
above, we extend these results to the case of $\Con^1$-coefficients.
We focus on the case of wave operators.

\begin{proposition}
  \label{prop-int}
  Let  $(\k^0, g^0 )\in \X^1(\M)$ and set $P^0 = P_{\k^0,g^0}$. Denote by  $p^0(x,\tau,\xi)  = - \tau^2 +
  g_x^0(\xi,\xi)$ its principal symbol.
Let $(\k_k, g_k )_{k\in \N} \subset \Y(\M)$
be such that $\Norm{(\k_k, g_k ) - (\k^0, g^0 )}{\Y(\M)} \to 0$ as $k \to
+\infty$ and set $P_k = P_{\k_k,g_k}$.

Consider a sequence $(u^k) _{k\in \N} \subset H_{\loc}^1(\L)$ that
converges to $0$ weakly and $\mu$ an $H^1$-microlocal defect density measure
associated with $(u^k) _{k\in \N}$.

Let $T_1 < T_2$. The following properties hold.
\begin{enumerate}
\item If $P_k u^k \to 0$ strongly in $H^{-1}_{\loc} \big((T_1,T_2) \times \M\big)$
then 
\begin{equation}
  \label{measure support}
  \supp (\mu) \cap S^* ((T_1, T_2)\times \M)
  \subset \Char(p^0).
 \end{equation}
 
\item If moreover $P_k u^k \to 0$ strongly in $L^2_{\loc} \big((T_1,T_2) \times \M\big)$
then one has
\begin{equation}\label{invariance}
  \transp{\Hpref} \mu =0 \ \text{in the sense of distributions on}\  S^*\big( (T_1, T_2) \times \M \big),
 \end{equation}
that is, $\dup{\mu}{\Hpref q}_{S^*\L} =0$ for all $q \in \Cinfc \big( S^*\big( (T_1, T_2) \times \M \big)\big)$.
\end{enumerate}
\end{proposition}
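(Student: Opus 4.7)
The plan is to reduce to local charts via Remark~\ref{local-global}, where $P^0$ reads
\begin{align*}
  P^0 = -D_t^2 + \alpha^{-1} \sum_{i,j} D_{x_i}\, \tilde g^{ij} D_{x_j} + 1,
\end{align*}
with $\alpha = \k^0 (\det g^0)^{1/2}$ and $\tilde g^{ij} = \alpha\, (g^0)^{ij}$ both of class $\Con^1$. The second-order part has the ``product of first-order derivatives with $\Con^1$-factors'' structure that Proposition~\ref{prop: peu-reg-2} is tailored for. A common input for both statements is the decomposition
\begin{align*}
  P^0 u^k = P_k u^k + (P^0 - P_k)\, u^k,
\end{align*}
in which the second term is handled by observing that $P^0 - P_k = \A_{\k_k, g_k} - \A_{\k^0, g^0}$ is in divergence form with coefficient difference going to $0$ in $L^\infty$; combined with the $H^1_{\loc}$-boundedness of $(u^k)$ this yields $(P^0 - P_k) u^k \to 0$ strongly in $H^{-1}_{\loc}$.

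For part (1), take $q \in \Cinfc(S^*\L)$ supported in $S^*((T_1,T_2)\times \M) \setminus \Char(p^0)$ and lift it to a $2$-homogeneous symbol $\tilde q \in S^{2}_{c,\phg}(T^*\L)$. Ellipticity of $p^0$ on $\supp(\tilde q)$ allows one to factor $\tilde q = b\, p^0$ with $b \in S^{0}_{c,\phg}(T^*\L)$ smooth; set $\mathcal B = \Op(b) \in \Psi^{0}_{c,\phg}(\L)$. Writing $\mathcal B\, P^0 u^k$ as a sum of contributions of the form $\mathcal B\, D_t \cdot D_t$ and $\mathcal B\, \alpha^{-1} D_{x_i}\, \tilde g^{ij} D_{x_j}$ (plus remainders that are of order at most one and so pair trivially with $u^k$ by the compact embedding $H^1_{\loc} \hookrightarrow L^2_{\loc}$), Proposition~\ref{prop: peu-reg-2} applied to each contribution yields
\begin{align*}
  \dup{\mathcal B\, P^0 u^k}{\ovl{u^k}}_{H^{-1}_{\comp}, H^1_{\loc}} \xrightarrow[k\to\infty]{} \dup{\mu}{b(-\tau^2 + |\xi|^2_x)} = \dup{\mu}{\tilde q}.
\end{align*}
On the other hand $\mathcal B$ is bounded on $H^{-1}_{\loc}$ and $P^0 u^k \to 0$ there, so the left-hand side tends to $0$; consequently $\dup{\mu}{\tilde q} = 0$, which is~\eqref{measure support}.

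For part (2), let $q \in \Cinfc\bigl(S^*((T_1,T_2) \times \M)\bigr)$ and pick $Q \in \Psi^{1}_{c,\phg}(\L)$ whose principal symbol restricts to $q$ on $S^*\L$ (up to the natural homogeneity choice). The commutator $[P^0, Q]$ is formally of order $2$ with principal symbol $\tfrac{1}{i}\{p^0, \sigma_1(Q)\} = \tfrac{1}{i}\Hpref q$; this identification is made rigorous in the $\Con^1$-setting by expanding $[P^0, Q]$ using the decomposition of $P^0$ and invoking Corollary~\ref{cor: peu-reg} together with Proposition~\ref{prop: peu-reg-2} to handle each commutator with a Lipschitz factor. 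Hence
\begin{align*}
  \dup{[P^0, Q] u^k}{\ovl{u^k}}_{H^{-1}_{\comp}, H^1_{\loc}} \xrightarrow[k\to\infty]{} \dup{\mu}{\tfrac{1}{i}\Hpref q}.
\end{align*}
We then expand $[P^0, Q] u^k = P^0(Q u^k) - Q(P^0 u^k)$ and use the formal self-adjointness of $P^0$ in the $L^2$-inner product built with $\k^0 \mu_{g^0}$ to move $P^0$ onto $u^k$ in the first term; the strengthened hypothesis $P_k u^k \to 0$ in $L^2_{\loc}$ together with the $H^{-1}_{\loc}$-control of $(P^0 - P_k) u^k$ make both pairings vanish as $k \to \infty$. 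We conclude $\dup{\mu}{\Hpref q} = 0$, that is~\eqref{invariance}.

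The main obstacle is to give meaning to, and compute, the principal symbol of the compositions $\mathcal B\, P^0$ and of the commutator $[P^0, Q]$ when the coefficients of $P^0$ are only $\Con^1$: the classical symbolic calculus does not directly apply. This is precisely where Theorem~\ref{C-M} and Proposition~\ref{prop: peu-reg-2} step in, identifying the limit of quadratic forms $\dup{b_1 Q_1 b_2 Q_2 u^k}{u^k}$ with $\dup{\mu}{b_1 b_2 q_1 q_2}$ when $b_1 \in W^{1,\infty}$ and $b_2 \in \Con^0$, which is exactly the regularity carried by the Lipschitz coefficients of the wave operator. Finally, Lemma~\ref{equivalence-densites} is used to freely swap the densities $\k_k \mu_{g_k}$ and $\k^0 \mu_{g^0}$ in the various duality pairings between hypothesis and conclusion.
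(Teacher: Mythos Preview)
Your overall strategy is that of the paper, but there is a genuine gap in part~(2) and a smaller flaw in part~(1).

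For part~(1), the factorization $\tilde q = b\,p^0$ with $b \in S^0_{c,\phg}$ \emph{smooth} is impossible: $p^0$ is only $\Con^1$ in $x$, so $b = \tilde q/p^0$ is merely $\Con^1$ and $\Op(b)$ is not a classical pseudodifferential operator to which Proposition~\ref{prop: peu-reg-2} applies. The paper proceeds the other way round: it takes an \emph{arbitrary} $B \in \Psi^0_{c,\phg}$, shows $\bigdup{B P_k u^k}{\ovl{u^k}} \to \dup{\mu}{b\,p^0}$ via Proposition~\ref{prop: peu-reg-2}, and concludes $p^0\mu = 0$. Note also that to fit Proposition~\ref{prop: peu-reg-2} one must first commute the Lipschitz factor $\alpha^{-1}$ past $\mathcal B$; the paper avoids this by splitting off a first-order remainder $R^{p,q}_k$ that pairs to zero against $u^k$.

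The serious issue is in part~(2). You split $[P^0,Q]u^k = P^0(Qu^k) - Q(P^0 u^k)$ and claim each term pairs with $u^k \in H^1_{\loc}$. But $P^0 u^k = P_k u^k + (P^0-P_k)u^k$ lies only in $H^{-1}_{\loc}$: the correction $(P^0-P_k)u^k$ is a second-order operator applied to an $H^1$-function, so your $H^{-1}_{\loc}$-control of it is sharp, not slack. Consequently $Q(P^0 u^k) \in H^{-2}_{\loc}$ and likewise $P^0(Qu^k) \in H^{-2}_{\loc}$; neither can be paired with $u^k$. Only their \emph{difference} lies in $H^{-1}_{\loc}$, so the splitting is illegitimate. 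The paper instead commutes with $P_k$, not $P^0$: since $P_k u^k \in L^2_{\loc}$ by hypothesis, both $B P_k u^k$ and $P_k B u^k$ lie in $H^{-1}_{\loc}$ separately, and self-adjointness of $P_k$ with respect to $\k_k\mu_{g_k}$ (transferred to the reference density by Lemma~\ref{equivalence-densites}) gives $\bigdup{P_k B u^k}{\ovl{u^k}} \sim \bigdup{B u^k}{\ovl{P_k u^k}}_{L^2} \to 0$. The symbolic limit of $[P_k,B]$ is then computed term by term, the $k$-dependent Lipschitz coefficients converging to those of $P^0$; one further subtlety you do not see from the $P^0$-viewpoint is that the term $[\tilde\k_k^{-1},B]\,\tilde\k_k A_k$ does not fit Proposition~\ref{prop: peu-reg-2} directly and is handled by using the equation to trade $A_k u^k$ for $\d_t^2 u^k$ modulo an $L^2$-small source.
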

\setcounter{sectionbiss}{\arabic{section}}
\setcounter{propositionbiss}{\arabic{theorem}}
Since $\Hpref$ is a tangent vector field on $S^*\L$ where $\mu$ lives
(see Section~\ref{sec: Hamiltonian vector field and bichars}) note 
that $\transp{\Hpref} \mu$ makes sense in
the second item of the proposition.
Moreover note that  $\Hpref$ is a tangent vector field on $S^*\L \cap
\Char(p^0)$ and one has $\supp (\mu) \cap S^* ((T_1, T_2)\times \M)
  \subset \Char(p^0)$ by the first item of the proposition. Finally,
  notice that for a Hamiltonian vector field, $\Hpref=  -
  \transp{\Hpref}$ as recalled in Section~\ref{sec: Hamiltonian
    vector field and bichars} even in the case $(\k^0, g^0 )\in \X^1(\M)$.

  \medskip
  Naturally, Proposition~\ref{prop-int} and its proof can be adapted to the other
  energy levels. We shall also need the following result.
\begin{propositionbis}
  \label{prop-int-bis}
  With the notation of Proposition~\ref{prop-int}, consider a sequence
  $(u^k) _{k\in \N} \subset L^2_{\loc}(\L)$ that converges to $0$
  weakly and $\mu$ an $L^2$-microlocal defect density measure
  associated with $(u^k) _{k\in \N}$.

Let $T_1 < T_2$. The following properties hold.
\begin{enumerate}
\item If $P_k u^k \to 0$ strongly in $H^{-2}_{\loc} \big((T_1,T_2) \times \M\big)$
then 
\begin{equation*}
  \supp (\mu) \cap S^* ((T_1, T_2)\times \M)
  \subset \Char(p^0).
 \end{equation*}
 
\item If moreover $P_k u^k \to 0$ strongly in $H^{-1}_{\loc} \big((T_1,T_2) \times \M\big)$
then one has
\begin{equation*}
  \transp{\Hpref} \mu =0 \ \text{in the sense of distributions on}\  S^*\big( (T_1, T_2) \times \M \big).
 \end{equation*}
\end{enumerate}
\end{propositionbis}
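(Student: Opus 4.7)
The plan is to mirror the proof of Proposition~\ref{prop-int}, shifting down by two the orders of the pseudo-differential operators we bring into play (and correspondingly the Sobolev regularity of the PDE residuals), since an $L^2$-microlocal defect measure pairs against order-$0$ symbols via \eqref{eq: identification symbols} whereas an $H^1$-microlocal defect measure pairs against order-$2$ symbols. As in Proposition~\ref{prop-int}, I would first use Remark~\ref{local-global} and the decomposition $\mu = \sum_j \chi_j \vartheta_j^* \mu_j$ of Section~\ref{sec: Local representatives of the measure} to reduce to the local Euclidean setting on a bounded open set $\Omega \subset \R^{d+1}$. The two technical inputs that will absorb the $\Con^1$-regularity of $p^0$ and the $W^{1,\infty}$-perturbations $P^0 - P_k$ are Corollary~\ref{cor: peu-reg} and Proposition~\ref{prop: peu-reg-2}.

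For the first item, I would pick $q \in \Cinfc(S^*((T_1,T_2) \times \M))$ with $\supp(q) \cap \Char(p^0) = \emptyset$ and extend it to a homogeneous symbol $\tilde q \in S^0_{c,\phg}(T^*\L)$. Since $p^0$ is degree-$2$ and bounded away from zero on $\supp(\tilde q) \setminus 0$, the quotient $r = \tilde q / p^0$ belongs to $S^{-2}_{c,\phg}(T^*\L)$ with $\Con^1$ coefficients. Setting $Q = \Op(\tilde q)$ and $R = \Op(r)$, the $\Con^1$-symbolic calculus provided by Corollary~\ref{cor: peu-reg} yields $Q = R P^0 + S$ with $S$ compact on $L^2(\Omega)$. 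Splitting $P^0 u^k = P_k u^k + (P^0 - P_k) u^k$, the first contribution $\dup{R P_k u^k}{\ovl{u^k}}_{L^2}$ vanishes in the limit because $R$ maps $H^{-2}$ to $L^2$ continuously and $P_k u^k \to 0$ in $H^{-2}_{\loc}$ by assumption, while the second contribution $\dup{R(P^0-P_k) u^k}{\ovl{u^k}}_{L^2}$ is handled by Proposition~\ref{prop: peu-reg-2} since the coefficients of $P^0 - P_k$ go to zero in $W^{1,\infty}$ for the principal part and in $L^\infty$ for the subprincipal part. Assembled, this produces $\dup{\mu}{q}_{S^*\L} = 0$, hence the support inclusion.

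For the invariance, I would fix a real-valued $q \in \Cinfc(S^*((T_1,T_2) \times \M))$, extend it to a degree-$(-1)$ symbol $\tilde q \in S^{-1}_{c,\phg}(T^*\L)$, and set $Q = \Op(\tilde q) \in \Psi^{-1}_{c,\phg}$, which is compact on $L^2$ and sends $L^2_{\loc}$ to $H^1_{\loc}$. The commutator $[P^0, Q]$ is of order $0$ with principal symbol $(1/i)\Hpref \tilde q$ (up to errors absorbed by Corollary~\ref{cor: peu-reg}), whose restriction to $S^*\L$ coincides with $(1/i)\Hpref q$ because $\Hpref$ is tangent to $S^*\L$. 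The $L^2$-microlocal defect measure thus gives
\begin{equation*}
\lim_k \dup{[P^0,Q] u^k}{\ovl{u^k}}_{L^2} = \frac{1}{i} \dup{\mu}{\Hpref q}_{S^*\L}.
\end{equation*}
On the other hand, writing $[P^0, Q] u^k = P^0(Q u^k) - Q(P^0 u^k)$ and using that in local coordinates $P^0$ is self-adjoint up to a first-order operator with $L^\infty$ coefficients, I would integrate by parts in the first term to transfer $P^0$ onto $u^k$, producing pairings that involve $P^0 u^k = P_k u^k + (P^0 - P_k) u^k$. By assumption (item~2), $P_k u^k \to 0$ in $H^{-1}_{\loc}$, and Proposition~\ref{prop: peu-reg-2} once again disposes of the metric-perturbation contribution, yielding $\dup{\mu}{\Hpref q}_{S^*\L} = 0$.

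The main obstacle will be the bookkeeping of the residual $(P^0 - P_k) u^k$ at the reduced regularity level: unlike in the $H^1$-setting of Proposition~\ref{prop-int}, the sequence $u^k$ has only $L^2$-regularity, so the principal-part difference $(g_k^{ij} - g^{0,ij}) \d_i \d_j u^k$ is merely an $H^{-2}_{\loc}$-distribution, and its vanishing can only be read off after pairing it with $R \in \Psi^{-2}$ (for item~1) or with a commutator structure (for item~2), combined with an approximation-by-smooth-coefficients argument à la Proposition~\ref{prop: peu-reg-2}. This is precisely the step that must be executed with care and that forces the choice $Q \in \Psi^{-1}$ rather than $\Psi^0$ in the invariance step, so as to keep $Q u^k$ in $H^1_{\loc}$ and make the duality pairings meaningful.
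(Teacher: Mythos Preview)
Your overall strategy---shifting the test operators down by two orders and otherwise following the proof of Proposition~\ref{prop-int}---is exactly what the paper intends when it says the proof ``can be adapted to the other energy levels''. Your choice of $Q \in \Psi^{-1}_{c,\phg}$ for item~(2), the commutator computation, and your observation that $Q u^k \in H^1_{\loc}$ is what makes the duality pairings meaningful, all match the adapted argument.

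For item~(1), however, your parametrix route introduces an unnecessary complication. The symbol $r = \tilde q / p^0$ has only $\Con^1$ regularity in $x$ (inherited from $g^0$), so $R = \Op(r)$ is not a smooth-coefficient pseudo-differential operator, and Corollary~\ref{cor: peu-reg} does not directly furnish the factorisation $Q = R P^0 + S$: that corollary treats commutators $[Q, m]$ with $Q$ a \emph{smooth} first-order operator and $m \in \Con^1$, not compositions of two $\Con^1$-coefficient operators. The paper's (adapted) argument is both simpler and avoids this issue: take an arbitrary smooth $B \in \Psi^{-2}_{c,\phg}(\L)$ with kernel supported in $\big((T_1,T_2)\times \M\big)^2$ and principal symbol $b$, and compute $\inp{B P_k u^k}{u^k}_{L^2}$ via the $L^2$-analogue of Proposition~\ref{prop: peu-reg-2}, obtaining $\dup{\mu}{b\, p^0}_{S^*\L}$ in the limit; on the other hand $\dup{B P_k u^k}{\ovl{u^k}} = \dup{P_k u^k}{\transp B\, \ovl{u^k}}_{H^{-2}_{\loc}, H^2_{\comp}} \to 0$ since $\transp B$ sends $L^2_{\loc}$ to $H^2_{\comp}$ and $P_k u^k \to 0$ in $H^{-2}_{\loc}$. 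Hence $\dup{\mu}{b\, p^0} = 0$ for all such $b$, giving the support inclusion directly---no $\Con^1$-coefficient parametrix needed.

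For item~(2), the paper's route uses $[P_k, B]$ rather than $[P^0, Q]$: this lets one exploit the exact self-adjointness of $P_k$ with respect to $\k_k \mu_{g_k}\, dt$ (via Lemma~\ref{equivalence-densites}) to show the commutator pairing vanishes, and pushes the metric perturbation entirely into the explicit term-by-term expansion of the commutator, where Proposition~\ref{prop: peu-reg-2} absorbs it. Your version with $[P^0, Q]$ is valid but costs one extra step, namely controlling $(P^0 - P_k)u^k$ both when integrating by parts and in the residual, as you correctly flag.
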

\begin{proof}[Proof of Proposition~\ref{prop-int}]
  Consider $B \in \Psi^0_{c, \phg}(\L)$ with kernel supported in
  $\big( (T_1,T_2)\times \M\big)^2$  and $b\in S^0_{c, \phg}(\L)$ its
  principal symbol. For the definition of the  $L^2$-inner product  we
  use $(\k^0,g^0)$. We also use the partition of unity $1= \sum_{j\in J}
\chi_j$ with $\chi_j  \in \Cinfc(\O_j)$ associated with the atlas
$\Atlas$ and the additional cutoff functions
  $\tchi_j , \hchi_j \in \Cinfc(\O_j)$ that are introduced in
  Section~\ref{sec: Local representatives of the measure} and, as
  obtained in that section,  we write 
 \begin{align}
   \label{eq: localization measure property}
   \bigdup{BP_k  u^k}{\ovl{u^k}}_{H_{\comp}^{-1}(\L),
     H_{\loc}^1(\L)} 
   &= 
   \sum_{j \in J}\bigdup{\chi_j BP_k  u^k}{\ovl{u^k}}_{H_{\comp}^{-1}(\L), H_{\loc}^1(\L)} \\
   &=
   \sum_{j \in J} \bigdup{\chi_j \tchi_j BP_k \tchi_j v^k_j}
     {\ovl{v_j^k}}_{H_{\comp}^{-1}(\L), H_{\loc}^1(\L)}+ o(1)_{k\rightarrow + \infty}, \notag
 \end{align}
with $v_j^k = \hchi_j u^k$. Associated with $(\vartheta_j^{-1})^*
v_j^k$, the local representative of  $v_j^k$,  is a microlocal
defect measure $\mu_j$  in $\vartheta_j(\O_j) = \tO_j = \R\times
\tilde{O}_j$ and 
$\chi_j^{\mathcal C_j} \mu_j$ is the local representative of $\chi_j
\mu$ in this chart. See Section~\ref{sec: Local representatives of the
  measure}.

Note that we use  local representatives of the operators, functions,
and measures without introducing any new symbols. 
Yet to keep clear that the analysis is carried out in a local chart we
use the notation $L^2(\tO_j)$, $H^s(\tO_j)$ and not $L^2(\L)$, $H^s(\L)$.  
To further lighten notation we set $\tk_k = (\det g_k)^{1/2}\k_k$. One
has 
\begin{align*}
P_k = \d_t ^2 - (\tk_k)^{-1} \sum_{p, q} \d_p \tk_k g^{p q}_k \d_q +1 
  = \tilde{P}_k 
  - \sum_{p, q}   R^{p,q}_k, 
\end{align*} 
with $\tilde{P}_k = \d_t ^2 - \sum_{p,q} \d_p g^{pq}_k \d_q  + 1$ and $R^{p,q}_k = (\tk_k)^{-1} [\d_p, \tk_k] g^{pq}_k \d_q$. Note that
$\tchi_j B R_k^{p,q} \tchi_j$ defines a sequence of bounded operators
from $H^1(\L)$ into $L^2(\L)$, uniformly with
respect to $k$. Consequently, one has 
\begin{align*}
  \bigdup{\chi_j \tchi_j BR^{p,q}_k\tchi_j v^k_j}
  {\ovl{v_j^k}}_{H_{\comp}^{-1}(\tO_j), H_{\loc}^1 (\tO_j)}
  = \biginp{\chi_j \tchi_j BR^{p,q}_k\tchi_j v^k_j}
  {v_j^k}_{L^2 (\tO_j)}
  \ \ \mathop{\to}_{k\to + \infty} \ \
  0 
  \end{align*} 
 since $v _j^k$ converges strongly to $0$ in $L^2 (\tO_j)$.  This  leads to 
 \begin{align*}
   \bigdup{\chi_j \tchi_j BP_k \tchi_j v^k_j}{\ovl{v_j^k}}_{H_{\comp}^{-1}(\tO_j), H_{\loc}^1(\tO_j)}
   \ \ &= 
   \bigdup{\chi_j \tchi_j B \tilde{P}_k \tchi_j v^k_j}{\ovl{v_j^k}}_{H_{\comp}^{-1}(\tO_j), H_{\loc}^1(\tO_j)}+ o(1)_{k\rightarrow + \infty}\\
   &=
     \dup{\mu_j}{\chi_j  b p^0}_{S^* (\tO_j)}+ o(1)_{k\rightarrow + \infty},
 \end{align*} 
by  Proposition~\ref{prop: peu-reg-2}. Since $\chi_j \mu_j = \chi_j \mu$
locally, lifting back the analysis to the manifold level,  with \eqref{eq: localization
  measure property},
one finds
\begin{align*}
   \bigdup{BP_k  u^k}{\ovl{u^k}}_{H_{\comp}^{-1}(\L),
     H_{\loc}^1(\L)} 
   =
  \sum_{j \in J}\dup{\mu}{\chi_j  b p^0}_{S^* (\L)} 
  = \dup{\mu}{b p^0}_{S^* (\L)}+ o(1)_{k\rightarrow + \infty}. 
 \end{align*}
Now, one has 
\begin{align*}
   \bigdup{BP_k  u^k}{\ovl{u^k}}_{H_{\comp}^{-1}(\L),
     H_{\loc}^1(\L)} 
  =
  \bigdup{P_k  u^k}{\transp B \ovl{u^k}}_{H_{\loc}^{-1}(\L),
  H_{\comp}^1(\L)}+ o(1)_{k\rightarrow + \infty},
 \end{align*}
 with the transpose operator $\transp B$ bounded from $H_{\loc}^1(\L)$ into
 $H_{\comp}^1(\L)$ since $B$ is itself bounded from $H_{\loc}^{-1}(\L)$ into
 $H_{\comp}^{-1}(\L)$.
 If one assumes that $P_k u^k \to 0$
strongly in $H^{-1}_{\loc} \big((T_1,T_2) \times \M\big)$ one obtains
\begin{align*}
   \bigdup{BP_k  u^k}{\ovl{u^k}}_{H_{\comp}^{-1}(\L),
     H_{\loc}^1(\L)} 
   \ \ \mathop{\to}_{k\to + \infty} \ \ 0 ,
 \end{align*}
and thus
\begin{align*}
 \dup{\mu}{b p^0}_{S^* (\L)}= 0, \quad \forall b \in
 S^0_{c, \phg}(\L) \ \text{with} \  \supp( b ) \subset T^* \big(
 (T_1,T_2) \times \M\big),
\end{align*} 
and 
 one obtains the support estimation~\eqref{measure support}.

\bigskip We now prove the second item of the proposition. We assume
that $P_k u^k$ lies in $L^2_{\loc}\big((T_1,T_2)\times \M)$ and converges
strongly to $0$ in this space.  Consider $B \in \Psi^1_{c, \phg}(\L)$ with kernel 
supported in $\big( (T_1,T_2)\times \M\big)^2$ and 
$b\in S^1_{c, \phg}(\L)$ its principal symbol.  We are interested in
the limit of
$\bigdup{[P_k,B] u^k}{\ovl{u^k}}_{H_{\comp}^{-1}(\L),
  H_{\loc}^1(\L)}$, which makes sense since  $[P_k,B]$ is of order $2$. 
We have $ [P_k,B] u^k = P_k B u^k - B P_k u^k \in H^{-1}\big((T_1,T_2)\times \M)$. Since $P_k u^k $ lies
in $L^2\big((T_1,T_2)\times \M)$ by assumption, $B P_k u^k$ lies
in $H^{-1}\big((T_1,T_2)\times \M)$ and the same holds for $P_k B
u^k$. We may thus write 
\begin{align*}
 \bigdup{[P_k,B] u^k}{\ovl{u^k}}_{H_{\comp}^{-1}(\L), H_{\loc}^1(\L)}
  = \bigdup{P_kB u^k}{\ovl{u^k}}_{H_{\comp}^{-1}(\L),H_{\loc}^1(\L)} 
  - \bigdup{P_k u^k}{\ovl{B^* u^k}}_{L^2_{\loc}(\L), L^2_{\comp}(\L)},
\end{align*}
where the adjoint is computed with respect to the $L^2$-inner product
associated with $(k^0,g^0)$ here. 
As $B$ maps continuously $L_{\loc}^2 \big((T_1,T_2)\times \M)$ into $H_{\comp}^{-1}\big((T_1,T_2)\times \M)$, we have $B^*$ maps
continuously $H_{\loc}^1(\L)$ into $L_{\comp}^2(\L)$. Thus, one has 
\begin{align*}
  \biginp{P_k u^k}{B^* u^k}_{L^2(\L)}  
  \ \ \mathop{\to}_{k\to +\infty} \ \ 0.
\end{align*}
By Lemma~\ref{equivalence-densites} it is asymptotically equivalent to use $(\k^0,g^0)$
or $(\k_k, g_k)$ for the definition of the $L^2$-inner product and
$H^{-1}_{\comp}$-$H^1_{\loc}$ duality, that
is, 
\begin{align*}
 \bigdup{P_kB u^k}{\ovl{u^k}}_{H_{\comp}^{-1}(\L),H_{\loc}^1(\L)} 
  =
  \bigdup{P_kB u^k}{\ovl{u^k}}_{H_{\comp}^{-1}(\L, \k_k \mu_{g_k} dt),H_{\loc}^1(\L, \k_k \mu_{g_k} dt)}+ o(1)_{k\rightarrow + \infty}.
\end{align*}
Since $P_k$ is selfadjoint for this latter  $L^2$-inner product, one
obtains 
\begin{align*}
 \bigdup{P_kB u^k}{\ovl{u^k}}_{H_{\comp}^{-1}(\L),H_{\loc}^1(\L)} 
  &=
  \bigdup{B u^k}
        {\ovl{P_k u^k}}_{L^2_{\comp} (\L, \k_k \mu_{g_k} dt), L^2_{\loc} (\L, \k_k \mu_{g_k} dt)}+ o(1)_{k\rightarrow + \infty}\\
  &=
  \bigdup{B u^k}{\ovl{P_k u^k}}_{L^2_{\comp} (\L), L^2_{\loc} (\L)}+ o(1)_{k\rightarrow + \infty}.
\end{align*}
Using again that $P_k u^k \to 0$ 
strongly to $0$ in $L^2_{\loc}\big((T_1,T_2)\times \M)$ we obtain 
\begin{align*}
 \bigdup{P_kB u^k}{\ovl{u^k}}_{H_{\comp}^{-1}(\L),H_{\loc}^1(\L)} 
  \ \ \mathop{\to}_{k\to +\infty} \ \ 0,
\end{align*}
and finally
\begin{align}
  \label{eq: limit [Pk,b] uk uk}
 \bigdup{[P_k,B] u^k}{\ovl{u^k}}_{H_{\comp}^{-1}(\L), H_{\loc}^1(\L)}\
   \ \ \mathop{\to}_{k\to +\infty} \ \ 0.
\end{align}

As above, with the partition of unity $1= \sum_{j\in J}
\chi_j$ we write 
\begin{align}
  \label{eq: limit [Pk,b] uk uk-2}
  \bigdup{[P_k,B] u^k}{\ovl{u^k}}_{H_{\comp}^{-1}(\L), H_{\loc}^1(\L)} 
  &= \sum_{j \in J} \bigdup{\chi_j [P_k,B]u^k}
  {\ovl{u^k}}_{H_{\comp}^{-1}(\L), H_{\loc}^1(\L)} .
\end{align}
For each term in the sum one has 
\begin{align*}
  \bigdup{\chi_j [P_k,B]u^k}
  {\ovl{u^k}}_{H_{\comp}^{-1}(\L), H_{\loc}^1(\L)} 
  &=
    \bigdup{\chi_j [P_k, \tB_j] v^k_j}
  {\ovl{v^k_j}}_{H_{\comp}^{-1}(\L), H_{\loc}^1(\L)}+ o(1)_{k\rightarrow + \infty}.
\end{align*}
with $\tB_j = \tchi_j B \tchi_j$. 
This allows one to work in a local chart and write 
\begin{align}
  \label{eq: limit [Pk,b] uk uk-3}
   \bigdup{[P_k,B] u^k}{\ovl{u^k}}_{H_{\comp}^{-1}(\L),H_{\loc}^1(\L)}
  =  \sum_{j \in J}
  \bigdup{\chi_j [P_k,\tB_j] v^k_j}
  {\ovl{v^k_j}}_{H_{\comp}^{-1}(\tO_j), H_{\loc}^1(\tO_j)},
\end{align}
with the (manifold-local chart) identifications described above. 
With
$A_k = A_{\k_k, g_k}$, in the local chart $\mathcal C_j$ one writes
\begin{align*}
  \chi_j[P_k, \tB_j] = \chi_j[\d_t^2, \tB_j] - \chi_j[\A_k, \tB_j]
= \chi_j[\d_t^2, \tB_j] -\sum_{1\leq p,q\leq d} 
 \!\! \big( Q^{pq}_1 + Q^{pq}_2 
  + Q^{pq}_3 +Q^{pq}_4\big) ,
\end{align*} 
with 
\begin{align*}
  &Q^{pq}_1 = \chi_j\tk_k^{-1} \d_{x_p} \tk_k g_k^{p q} [\d_{x_q}, \tB_j], \quad 
  Q^{pq}_2 = \chi_j\tk_k^{-1} \d_{x_p} [ \tk_k  g_k^{p q}, \tB_j] \d_{x_q}, \\
  &Q^{pq}_3 = \chi_j\tk_k^{-1}[\d_{x_p}, \tB_j] \tk_k  g_k^{p q}\d_{x_q}, \quad 
  Q^{pq}_4 =  \chi_j [\tk_k^{-1}, \tB_j] \d_{x_p} \tk_k  g_k^{p q}\d_{x_q}.
\end{align*}

We now compute the limit of each term associated with this
decomposition of $[P_k, \tB_j]$ on the right-hand side of \eqref{eq: limit [Pk,b] uk uk-3}.
The principal symbol of $\chi_j [\d_t^2, \tB_j]$ is $i\chi_j \{ \tau^2, b\}$ and thus
\begin{align*}
  \bigdup{\chi_j [\d_t^2, \tB_j]  v^k_j}
  {\ovl{v^k_j}}_{H_{\comp}^{-1}(\tO_j), H_{\loc}^1(\tO_j)}
 =
  \dup{\mu_j}{ i \chi_j   \{ \tau^2, b\} }_{S^* (\tO_j)}+ o(1)_{k\rightarrow + \infty}.
\end{align*}
Proposition~\ref{prop: peu-reg-2}  applies and yields
\begin{align*}
  \bigdup{Q^{pq}_{1} v^k_j}
  {\ovl{v^k_j}}_{H_{\comp}^{-1}(\tO_j), H_{\loc}^1(\tO_j)}
 =
  \dup{\mu_j}{i \chi_j  g^{0, p q} \xi_p \d_{x_q} b}_{S^* (\tO_j)}+ o(1)_{k\rightarrow + \infty},
\end{align*}
 and 
\begin{align*}
  \bigdup{Q^{pq}_{3} v^k_j}
  {\ovl{v^k_j}}_{H_{\comp}^{-1}(\tO_j), H_{\loc}^1(\tO_j)}
=   \dup{\mu_j}{i \chi_j  g^{0, p q} (\d_{x_p} b) \xi_q }_{S^* (\tO_j)}+ o(1)_{k\rightarrow + \infty}.
\end{align*}

With Theorem~\ref{C-M} one has $[ \tk_k g_k^{p q}, \tB_j] \to [ \tk^0
g^{0,p q}, \tB_j]$ in $\mathscr L\big(L^2(\tO_j)\big)$ as $k \to +\infty$. It
follows that 
\begin{align*}
  \bigdup{Q^{pq}_{2} v^k_j}
  {\ovl{v^k_j}}_{H_{\comp}^{-1}(\tO_j), H_{\loc}^1(\tO_j)}
  =
  \bigdup{Q^{pq}_{2,a} v^k_j}
  {\ovl{v^k_j}}_{H_{\comp}^{-1}(\tO_j), H_{\loc}^1(\tO_j)}+ o(1)_{k\rightarrow + \infty},
\end{align*}
with 
\begin{align*}
  Q^{pq}_{2,a} = \chi_j\tk_k^{-1} \d_{x_p} [ \tk^0  g^{0, p q}, \tB_j] \d_{x_q}.
\end{align*}
With Corollary~\ref{cor: peu-reg} one writes 
\begin{align*}
  [ \tk^0  g^{0,p q}, \tB_j] = - \frac{1}{i} \nabla_{x}(\tk^0  g^{0,p q}) \cdot \Op
  \big(\nabla_{\xi} (\tchi_j^2 b)\big) + K_1,
\end{align*}
with $K_1$ a compact operator on $L^2(\R^{d+1})$, with compactly supported
kernel. One thus obtains
\begin{align*}
  \bigdup{Q^{pq}_{2} v^k_j}
  {\ovl{v^k_j}}_{H_{\comp}^{-1}(\tO_j), H_{\loc}^1(\tO_j)}
=
  \bigdup{Q^{pq}_{2,b} v^k_j}
  {\ovl{v^k_j}}_{H_{\comp}^{-1}(\tO_j), H_{\loc}^1(\tO_j)}+ o(1)_{k\rightarrow + \infty},
\end{align*}
with 
\begin{align*}
  Q^{pq}_{2,b} = - \frac{1}{i} \chi_j\tk_k^{-1} \d_{x_p} \nabla_{x}(\tk^0  g^{0,p q}) \cdot \Op
  \big(\nabla_{\xi} (\tchi_j^2 b)\big)\d_{x_q}.
\end{align*}
 Proposition~\ref{prop: peu-reg-2} applies and yields
\begin{align*}
  \bigdup{Q^{pq}_{2} v^k_j}
  {\ovl{v^k_j}}_{H_{\comp}^{-1}(\tO_j), H_{\loc}^1(\tO_j)}
  =
  \dup{\mu_j}{- i \chi_j  \xi_p \xi_q (\tk^0)^{-1}
  \nabla_{x}(\tk^0  g^{0,p q}) \cdot \nabla_{\xi} b }_{S^* (\tO_j)}+ o(1)_{k\rightarrow + \infty}.
\end{align*}

We now treat the term associated with $Q^{pq}_4$. 
Note that one has $\sum_{p,q}Q^{pq}_4 =  \chi_j [\tk_k^{-1}, \tB_j]
\tk_k A_k$. We write, lifting temporarily the analysis back to the manifold,
\begin{align*}
  \sum_{p,q}\bigdup{Q^{pq}_{4} v^k_j}
  {\ovl{v^k_j}}_{H_{\comp}^{-1}(\tO_j), H_{\loc}^1(\tO_j)}
  &= \bigdup{\chi_j [\tk_k^{-1}, B] \tk_k A_k v^k_j}
  {\ovl{v^k_j}}_{H_{\comp}^{-1}(\L), H_{\loc}^1(\L)}\\
    &=       \bigdup{\chi_j [\tk_k^{-1}, B] \tk_k A_k u^k}
      {\ovl{u^k}}_{H_{\comp}^{-1}(\L), H_{\loc}^1(\L)}+ o(1)_{k\rightarrow + \infty}.
\end{align*}
Setting $f^k = (\d_t^2 - A_k)u^k$ with $f^k \to 0$ strongly in
$L^2_{\loc}\big((T_1,T_2) \times \M\big)$, we thus find
 \begin{align*}
  \sum_{p,q}\bigdup{Q^{pq}_{4} v^k_j}
  {\ovl{v^k_j}}_{H_{\comp}^{-1}(\tO_j), H_{\loc}^1(\tO_j)}
  &=
      \bigdup{\chi_j [\tk_k^{-1}, B] \tk_k \d_t^2 u^k}
      {\ovl{u^k}}_{H_{\comp}^{-1}(\L), H_{\loc}^1(\L)}\\
    &\qquad  - \bigdup{\chi_j [\tk_k^{-1}, B] \tk_k f^k}
      {\ovl{u^k}}_{H_{\comp}^{-1}(\L), H_{\loc}^1(\L)}+ o(1)_{k\rightarrow + \infty}\\
   &=
      \bigdup{\chi_j [\tk_k^{-1}, \tB_j] \tk_k \d_t^2 v^k_j}
      {\ovl{v^k_j}}_{H_{\comp}^{-1}(\tO_j), H_{\loc}^1(\tO_j)}+ o(1)_{k\rightarrow + \infty},
 \end{align*}
 bringing again the analysis at the level of the local chart.
 
Using that $\tk_k$ is independent of $t$ we may write
\begin{align*}
  \chi_j [\tk_k^{-1}, \tB_j] \tk_k \d_t
  = \chi_j  \d_t [\tk_k^{-1}, \tB_j] \tk_k 
  + \chi_j [\tk_k^{-1}, E_j ]\tk_k,
 \end{align*}
where $E_j = [\d_t, \tB_j]\in \Psi^1_{c, \phg}(\tO_j)$, with $\d_t b\in
S^1_{c, \phg}(\tO_j)$ for principal symbol. 
With Theorem~\ref{C-M} we see that 
$[\tk_k^{-1}, E_j]$ maps $L^2(\tO_j)$ into itself continuously and
moreover $[\tk_k^{-1}, E_j] \to [(\tk^0)^{-1}, E_j]$ in $\mathscr
L\big(L^2(\tO_j)\big)$. Thus we obtain  
\begin{multline*}
  \bigdup{\chi_j [\tk_k^{-1}, E_j ]\tk_k \d_t v^k_j}
      {\ovl{v^k_j}}_{H_{\comp}^{-1}(\tO_j), H_{\loc}^1(\tO_j)}\\
=
  \bigdup{\chi_j [(\tk^0)^{-1}, E_j]\tk_k \d_t v^k_j}
      {\ovl{v^k_j}}_{H_{\comp}^{-1}(\tO_j), H_{\loc}^1(\tO_j)}+ o(1)_{k\rightarrow + \infty}
  \mathop{\to} _{k\to + \infty} \ \ 
  0,
\end{multline*}
arguing as above. 
Similarly we write 
\begin{align*}
  &\bigdup{\chi_j  \d_t [\tk_k^{-1}, \tB_j] \tk_k \d_t v^k_j}
      {\ovl{v^k_j}}_{H_{\comp}^{-1}(\tO_j), H_{\loc}^1(\tO_j)}\\
  &\qquad \qquad  \mathop{\sim} _{k\to + \infty} \ \ 
  \bigdup{\chi_j  \d_t [(\tk^0)^{-1}, \tB_j] \tk_k^0 \d_t v^k_j}
      {\ovl{v^k_j}}_{H_{\comp}^{-1}(\tO_j), H_{\loc}^1(\tO_j)}
  \end{align*}
Arguing as we did for the term associated with $Q_2^{p,q}$ we thus find
\begin{multline*}
  \bigdup{\chi_j  \d_t [\tk_k^{-1}, \tB_j] \tk_k \d_t v^k_j}
      {\ovl{v^k_j}}_{H_{\comp}^{-1}(\tO_j), H_{\loc}^1(\tO_j)}\\
 =
   \dup{\mu_j}{- i \chi_j  \tau^2 \tk^0  (\nabla_x (\tk^0)^{-1})\cdot
  \nabla_\xi b }_{S^* (\tO_j)}+ o(1)_{k\rightarrow + \infty}.
\end{multline*}

\medskip
Collecting the various estimates we found we obtain 
\begin{align}
  \bigdup{\chi_j [P_k,\tB_j] v^k_j}
  {\ovl{v^k_j}}_{H_{\comp}^{-1}(\tO_j), H_{\loc}^1(\tO_j)}
 = \dup{\mu_j}{\chi_j   \sigma}_{S^* (\tO_j)}+ o(1)_{k\rightarrow + \infty}.
\end{align}
with 
\begin{align*} 
  \sigma &= i \{ \tau^2, b\} 
           - i \sum_{p,q} \big( 
           g^{0, p q} \xi_p \d_{x_q} b 
           + g^{0, p q} (\d_{x_p} b) \xi_q 
           -  \xi_p \xi_q (\tk^0)^{-1} 
           \nabla_{x}(\tk^0  g^{0,p q})\cdot \nabla_{\xi} b \big)\\
  &\quad + i \tau^2 \tk^0 
    (\nabla_x (\tk^0)^{-1})\cdot \nabla_\xi b.
\end{align*}
Recalling that $p^0 = - \tau^2 +  \sum_{p,q} g^{0, p q} \xi_p \xi_q$
one finds
\begin{align*} 
  \sigma &=
           - i \{ p^0, b\} 
           + i p^0  (\tk^0)^{-1} \nabla_{x}(\tk^0)  \cdot \nabla_{\xi} b.
\end{align*}
Since $\mu$, and thus $\mu_j$, is supported in $\Char(p^0)$ by the
first part of the proposition, one
concludes that 
\begin{align*}
  \bigdup{\chi_j [P_k,\tB_j] v^k_j}
  {\ovl{v^k_j}}_{H_{\comp}^{-1}(\tO_j), H_{\loc}^1(\tO_j)}
 = -i \bigdup{\mu_j}{\chi_j   \{ p^0, b\} }_{S^* (\tO_j)}+ o(1)_{k\rightarrow + \infty}.
\end{align*}
Since $\chi_j \mu = \chi_j \mu_j$ (see Section~\ref{sec: Local representatives of the measure}), 
with \eqref{eq: limit [Pk,b] uk uk-2}--\eqref{eq: limit [Pk,b] uk
  uk-3} one obtains 
\begin{align*}
  \bigdup{[P_k,B] u^k}{\ovl{u^k}}_{H_{\comp}^{-1}(\L), H_{\loc}^1(\L)} 
= -i \bigdup{\mu}{\{ p^0, b\} }_{S^* (\L)}+ o(1)_{k\rightarrow + \infty}.
\end{align*}
With \eqref{eq: limit [Pk,b] uk uk}, this concludes the proof of  the
second part of  
the proposition since $\{p^0, b\} = \Hpref b$. 
 \end{proof}

\section{Measure support propagation: proof of Theorem ~\ref{th: ODE}}
\label{sec proof: th: ODE}

Theorem~\ref{th: ODE} is stated on an open subset of a smooth
manifold. Yet, its result is of a local nature. Using a
local chart we may assume that we consider an open of set $\Omega$ of $\R^d$
instead without any loss of generality.

The strategy we follow is very much inspired by the approach of
Melrose and Sj\"ostrand to the propagation of
singularities~\cite{MeSj78} and relies on careful choices of test
functions allowing one to construct sequences of points in the support
of the measure relying on nonnegativity\footnote{of the measure in our
case and of some operators for Melrose and Sj\"ostrand, via the
G\r{a}rding inequality.}. Then, a limiting procedure leads to the
conclusion, in the spirit of the classical proof of the Cauchy-Peano
theorem.

The proof of Theorem~\ref{th: ODE} is
made of two steps that are stated in the following propositions.
\begin{proposition}
\label{prop: invariant}
Let $X$ be a $\Con^0$-vector field on $\Omega$ an open set of
$\R^d$. For a closed set $F$ of $\Omega$, the following two
properties are equivalent.
\begin{enumerate}
\item \label{one} The set $F$ is a union of maximally extended integral curves of the vector field $X$.
\item \label{two} For any compact  $K\subset \Omega$ where the vector field $X$ does not vanish, 
\begin{align*}
  \forall \eps>0, \ \exists \delta_0>0, \  \forall x\in K\cap F,  
  \ \forall \delta \in [- \delta_0, \delta_0], 
  \ \ B\big(x+ \delta X(x), \delta \eps\big) \cap F \neq \emptyset.
\end{align*}
\end{enumerate}
\end{proposition}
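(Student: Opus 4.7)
The plan is to prove the equivalence in two stages, the substantive direction being $(2) \Rightarrow (1)$. The direction $(1) \Rightarrow (2)$ follows from the integral form of the ODE together with uniform continuity of $X$. Fix such a compact $K$ and $\eps > 0$; pick a relatively compact open neighborhood $U$ of $K$ with $\overline{U} \subset \Omega$, on which $X$ is bounded by some $M$ and uniformly continuous. For $x \in K \cap F$, hypothesis $(1)$ furnishes a maximally extended integral curve $\gamma$ through $x$ lying entirely in $F$. As long as $|\delta| \leq \delta_0 := \dist(K, \partial U) / M$, the curve stays in $\overline{U}$ and the identity
\begin{equation*}
  \gamma(\delta) - x - \delta X(x) = \int_0^{\delta} \bigl( X(\gamma(s)) - X(x) \bigr)\, ds
\end{equation*}
combined with a further reduction of $\delta_0$ below the modulus of continuity of $X$ associated with $\eps$ gives $|\gamma(\delta) - x - \delta X(x)| \leq \eps |\delta|$, uniformly in $x \in K \cap F$. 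Since $\gamma(\delta) \in F$, property $(2)$ follows.

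For the converse, fix $x_0 \in F$. If $X(x_0) = 0$, then $s \mapsto x_0$ is already a maximally extended integral curve contained in $F$, and there is nothing to do. Assume henceforth $X(x_0) \neq 0$. I shall build locally, by a Cauchy--Peano-style polygonal scheme powered by $(2)$, an integral curve through $x_0$ contained in $F$, and then extend it maximally. Choose a closed ball $K \subset \Omega$ centered at $x_0$ on which $X$ does not vanish and is bounded by $M$, and let $K'$ be a concentric ball of half-radius chosen so that any curve of Lipschitz norm at most $M+1$ starting in $K'$ stays in $K$ for times $|t| \leq T$, for some fixed $T > 0$. Property $(2)$ applied to $K$ with a sequence $\eps_n \downarrow 0$ yields $\delta_n > 0$ such that for every $x \in K \cap F$ and every $|\delta| \leq \delta_n$, the ball $B(x + \delta X(x), |\delta| \eps_n)$ meets $F$.

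Fixing a step $\tau_n \in (0, \delta_n]$ with $\tau_n \downarrow 0$, set $x_0^{(n)} = x_0$ and inductively select $x_{k+1}^{(n)} \in F \cap B(x_k^{(n)} + \tau_n X(x_k^{(n)}), \tau_n \eps_n)$ as long as $x_k^{(n)} \in K'$. Piecewise affine interpolation between consecutive iterates yields a curve $\gamma_n$ with Lipschitz constant at most $M + \eps_n$, so the construction is valid on $[0, T]$ uniformly in $n$. Ascoli--Arzelà extracts a subsequence converging uniformly to some $\gamma \colon [0, T] \to K$. The piecewise derivative satisfies $\gamma_n'(s) = X(x_k^{(n)}) + O(\eps_n)$ on each subinterval, and uniform continuity of $X$ together with $|\gamma_n(s) - x_k^{(n)}| \leq (M+1) \tau_n$ gives $\gamma_n(t) = x_0 + \int_0^t X(\gamma_n(s))\, ds + r_n(t)$ with $\Norm{r_n}{L^\infty([0,T])} \to 0$. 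Passing to the limit yields the integral equation $\gamma(t) = x_0 + \int_0^t X(\gamma(s))\, ds$, so $\gamma$ is a $\Con^1$ integral curve of $X$. Since the grid points $\gamma(k \tau_n)$ lie in $F$ and the grid becomes dense, closedness of $F$ forces $\gamma([0,T]) \subset F$. The same construction with $\tau_n < 0$ extends $\gamma$ to $[-T, 0]$, and Zorn's lemma applied to the set of integral curves of $X$ through $x_0$ with image in $F$, ordered by graph inclusion, produces a maximally extended integral curve through $x_0$ lying in $F$, concluding $(2) \Rightarrow (1)$.

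The main obstacle is the interplay of two limits in the converse: one must simultaneously send the discretization parameter $\eps_n$ to zero, to recover a true integral equation at the limit, while keeping the iterates inside a fixed compact region on which $X$ is bounded and property $(2)$ remains applicable. This is precisely what forces the preliminary uniform Lipschitz control on $\gamma_n$ and the nested choice $K' \subset K$. Once these are in place the convergence is automatic from Ascoli--Arzelà, closedness of $F$, and uniform continuity of $X$ on compacts, with no contraction or Grönwall-type ingredient entering the argument (which would be unavailable anyway, $X$ being merely continuous).
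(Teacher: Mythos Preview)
Your proof is correct and follows essentially the same strategy as the paper's: both directions use the integral form of the ODE and uniform continuity of $X$ on compacts for $(1)\Rightarrow(2)$, and a Cauchy--Peano polygonal scheme with nodes selected in $F$ via property $(2)$, followed by Arzel\`a--Ascoli, for $(2)\Rightarrow(1)$. Two minor points: in the sentence ``the grid points $\gamma(k\tau_n)$ lie in $F$'' you mean $\gamma_n(k\tau_n)=x_k^{(n)}$; and your Zorn argument yields a curve maximal \emph{among those contained in $F$}, so you should note (as the paper does via continuation) that such a curve is also maximally extended in $\Omega$, since otherwise the local construction applied at the endpoint (which lies in $F$ by closedness) would contradict maximality in $F$.
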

\begin{proposition}
  \label{prop: symboles}
  Let $X$ be a $\Con^0$-vector field on $\Omega$ an open set of
  $\R^d$.  Consider a nonnegative measure $\mu$ on $\Omega$ that is a solution
  to  $\transp{X} \mu=0$
in the sense of distributions, that is, 
  \begin{equation}
  \label{EDO-bis}  
  \dup{\transp{X} \mu}{a}_{\D'(\Omega), \Cinf_c(\Omega)}
  = \dup{\mu}{X a}_{\D^{\prime,0}(\Omega), \Con^0_c(\Omega)}=0, 
  \qquad a \in \Cinfc(\Omega).
\end{equation}
Then, the closed set $F= \supp(\mu)$ satisfies the second property in
Proposition~\ref{prop: invariant}.
\end{proposition}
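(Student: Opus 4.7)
My plan is to argue by contradiction via a cigar-shaped test function whose action under $X$ telescopes to a signed difference of two bumps at the endpoints of an approximate flow segment; nonnegativity of $\mu$ and the modulus of continuity of $X$ then reduce the whole matter to a single estimate.

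\emph{Contradiction setup.} Assuming property (2) fails, compactness of $K$ produces sequences $x_n\to x^\ast\in K\cap F$, $\delta_n\downarrow 0$ (the sign $\delta_n<0$ being handled by replacing $X$ with $-X$), and $v_n=X(x_n)\to v^\ast=X(x^\ast)\neq 0$, such that
\begin{equation*}
B(x_n+\delta_n v_n,\delta_n\eps_0)\cap F=\emptyset.
\end{equation*}
A preliminary remark: $\mu$ carries no atom on $K$. Indeed, testing \eqref{EDO-bis} against $a\in\Cinfc(\Omega)$ supported in a small neighborhood of a putative atom $x^\ast$ and with prescribed gradient $\nabla a(x^\ast)=v$ yields $\mu(\{x^\ast\})\,X(x^\ast)\cdot v=0$ for every $v$, forcing $X(x^\ast)=0$, which contradicts $X\neq 0$ on $K$.

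\emph{Cigar and telescoping.} Fix $\rho\in\Cinfc(\R^d)$ with $\rho\geq 0$, $\rho(0)>0$, $\supp\rho\subset B(0,1)$; for a large constant $C_0$, set $r_n=\delta_n\eps_0/C_0$ and
\begin{equation*}
a_n(x)=\int_0^1\rho\!\left(\frac{x-x_n-t\delta_n v_n}{r_n}\right)dt\ \in\Cinfc(\Omega).
\end{equation*}
Since $\delta_n v_n\cdot\nabla_x$ applied to the integrand equals $-\partial_t$ of the same, the fundamental theorem of calculus in $t$ yields the key identity
\begin{equation*}
\delta_n\,v_n\cdot\nabla_x a_n(x)=\rho_n^{-}(x)-\rho_n^{+}(x),\qquad \rho_n^{\pm}(x)=\rho\bigl((x-x_n^{\pm})/r_n\bigr),
\end{equation*}
with $x_n^{-}=x_n$ and $x_n^{+}=x_n+\delta_n v_n$. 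Decomposing $X=v_n+(X-v_n)$ in $0=\int X\cdot\nabla a_n\,d\mu$ gives
\begin{equation*}
\mu(\rho_n^{-})-\mu(\rho_n^{+})=-\delta_n\int(X-v_n)\cdot\nabla a_n\,d\mu.
\end{equation*}
The bump $\rho_n^{+}$ has $\supp\rho_n^{+}\subset B(x_n+\delta_n v_n,r_n)$ inside the gap (taking $C_0\geq 1$), so $\mu(\rho_n^{+})=0$, while $\mu(\rho_n^{-})\geq c_\rho\,\mu\bigl(B(x_n,r_n/2)\bigr)$. Uniform continuity of $X$ on $K$ bounds $|X-v_n|\leq\omega_X(C_1\delta_n)$ on $\supp a_n\subset B(x_n,C_1\delta_n)$ and $|\nabla a_n|\leq C/r_n$, yielding the central estimate
\begin{equation*}
0<c_\rho\,\mu\bigl(B(x_n,r_n/2)\bigr)\leq\frac{C\,\omega_X(\delta_n)}{\eps_0}\,\mu\bigl(B(x_n,C_1\delta_n)\bigr).
\end{equation*}

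\emph{Closing and main obstacle.} Since $\omega_X(\delta_n)\to 0$, the displayed bound is a quantitative reverse-doubling inequality with coefficient vanishing in $n$; converting it into an outright contradiction is the main technical point. The strategy, in the Cauchy-Peano spirit alluded to in the introduction, is to iterate: continuity of $X$ and stability of the gap condition ensure the same estimate for every $y\in F$ close enough to $x_n$, with constants degraded by a controlled amount. Chaining the estimate across nested scales and neighboring centers of $F$, and using compactness of $K$ to propagate the smallness of $\omega_X$ uniformly, one forces $\mu$ to vanish on a fixed neighborhood of $x_n$ for large $n$, contradicting $x_n\in F=\supp\mu$.
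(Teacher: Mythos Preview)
Your cigar construction and the telescoping identity are correct, and they lead cleanly to the inequality
\[
c_\rho\,\mu\bigl(B(x_n,r_n/2)\bigr)\ \le\ \frac{C\,\omega_X(C_1\delta_n)}{\eps_0}\,\mu\bigl(B(x_n,C_1\delta_n)\bigr),
\qquad r_n=\delta_n\eps_0/C_0.
\]
The problem is the last paragraph: the ``iteration'' you invoke does not close, and this is a genuine gap rather than a missing detail. The gap hypothesis $B(x_n+\delta_n v_n,\delta_n\eps_0)\cap F=\emptyset$ is stable only for points $y\in F$ with $|y-x_n|\lesssim \delta_n\eps_0$, and only at the \emph{same} scale $\delta_n$; you have no information at smaller $\delta$, nor at points further than $O(\delta_n)$ from $x_n$. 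A covering argument therefore yields at best one more inequality of the same type, $\mu(B(x_n,c\delta_n))\le N\alpha_n\,\mu(B(x_n,C'\delta_n))$ with $C'>c$ comparable, and there is no mechanism to propagate this outward to a \emph{fixed} neighbourhood. Both sides of your displayed inequality are positive (since $x_n\in\supp\mu$) and both tend to $0$ as $n\to\infty$; the inequality only says the mass of the inner ball is a small fraction of that of the outer ball at that one scale. No contradiction follows.

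The source of the difficulty is that in the decomposition $Xa_n=\delta_n^{-1}(\rho_n^- -\rho_n^+)+(X-v_n)\cdot\nabla a_n$ the error term has no sign, and after the crude bound $|\nabla a_n|\lesssim r_n^{-1}$ it competes with the main term. The paper's proof sidesteps this entirely by choosing a different test function $q_{\eps,\delta,x^0}=\chi(v)\,\beta(w)$ whose level sets are \emph{parabolic} in the direction of $X(x^0)$: one then has $Xq=g+h$ with $h$ supported in the target ball $B(x^0+\delta X(x^0),\delta\eps)$ and, crucially, $g\ge 0$ on \emph{all} of $\Omega$ and $g(x^0)>0$ (this is the content of Lemma~\ref{lem: positivite g}). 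Positivity of $\mu$ then gives $\langle\mu,g\rangle>0$ and hence $\langle\mu,h\rangle\neq 0$ directly, with no iteration and no contradiction argument. The point is that the careful shape of $q$ absorbs the $(X-X(x^0))$ error into the sign of $g$, rather than leaving it as a separate remainder to be estimated.
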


\medskip
\begin{proof}[Proof of Proposition~\ref{prop: invariant}]
 First, we prove that Property~\eqref{one} implies
  Property~\eqref{two} and consider a compact set $K$ of $\R^d$ such
  that $K \subset \Omega$ and $K \cap F \neq \emptyset$. 

  There exists $\eta>0$ such that $K \subset K_\eta \subset
  \Omega$ with $K_\eta = \{ x \in \Omega;\ \dist(x, K) \leq
  \eta\}$. One has $\Norm{X}{} \leq C_0$ on $K_\eta$ for some $C_0>0$. Let $x \in K$ and let $\gamma(s)$ be a maximal integral
  curve defined on an interval $]a,b[$, $a, b \in \ovl{\R}$ and such
  that $0 \in ]a,b[$ and $\gamma(0) =x$. If $b <
\infty$ then there exists $s^1 \! \! \in \, ]0,b[$ such that $\gamma(s^1)
\notin K_\eta$. Since $\gamma(s) \in K_\eta$ if $s <  \eta /C_0$, one finds
  that   $b \geq \eta /C_0$. Similarly, one has $|a| \geq \eta /C_0$.
  Consequently, there exists $\Slim>0$ such that any
  maximal integral curve $\gamma(s)$ of the vector field $X$ with
  $\gamma (0) \in K$ is defined for $s \in I= (- \Slim, \Slim)$. 
  
  Let us pick $x \in K$. According to
  Property~\eqref{one}, there exists
  \begin{align*}
    \gamma: I \rightarrow F \ \text{such that} \ 
    \dot{\gamma}(s) = X(\gamma(s)) 
     \ \text{and} \  \gamma(0) =x.
    \end{align*}
    By uniform continuity of the vector field $X $ in a compact
    neighborhood of $K$ we have
\begin{align*}
  \gamma(s) = \gamma(0) + \int_0^s \dot{\gamma}(s) ds
  = \gamma(0) + \int_0^s X(\gamma(s)) ds
  = x+ s X(x) + r(s),
  \qquad s\in (-S, S)
  \end{align*}
where $\lim_{s\rightarrow 0} \Norm{r(s)}{} / s =0$,
{\em uniformly} with respect to $x$. We deduce that for any  $\eps
>0$ there exists $0< \delta_0< S$ such that $\Norm{r(s)}{}< s \eps $  for any $s\in (- \delta_0, \delta_0)$,
which implies 
\begin{align*} 
  F \ni \gamma(s) \in B\big(x+ sX(x), s\eps\big).
\end{align*}

\medskip Second, we prove that Property~\eqref{two} implies
Property~\eqref{one}. It suffices to prove that for any $x\in F$ there
exist an interval $I \ni 0$ and an integral curve 
\begin{align*}  
  \gamma: I \rightarrow F 
  \ \text{such that} \ 
  \dot{\gamma}(s) =X( \gamma(s))
   \ \text{and} \ \gamma(0) =x. 
\end{align*}
Then, the standard continuation argument shows that this local
integral curve included in $F$ can be extended to a maximal integral
curve also included in $F$.

If $X(x)=0$, then the trivial integral curve
$\gamma(s) = x$, $s\in \R$, is included in $F$. As a consequence,
we assume $X(x) \neq 0$ and we pick a compact
neighborhood $K$ of $x$ containing  $B(x,\eta)$ with $\eta >0$ and where, for some $0 < c_K < C_K$, 
\begin{align*} 
 c_K\leq \Norm{ X(y)}{} \leq C_K, \quad y \in K.
\end{align*}

Let $n \in \N^*$. Set $x_{n,0} = x$ and  $\eps = 1 /n$ and apply
Property~\eqref{two}. One deduces that there exist $0< \delta_n \leq
1 / n$ and a point 
\begin{align*}  
x_{n,1}\in F \cap B \big(x_{n,0}+\delta_n X(x_{n,0}), {\delta_n} / n\big). 
\end{align*}
If $x_{n,1}\in K$ one can perform this construction again, starting
from $x_{n,1}$ instead of $x_{n,0}$. If a sequence of points
$x_{n,0}, x_{n,1}, \dots, x_{n,L^+}$ is obtained in this manner one has
\begin{align}  
  \label{eq: construction points - away from boundary}
x_{n,\ell+1} \in F \cap B\big( x_{n,\ell}+\delta_n X(x_{n, \ell}),
  {\delta_n} /n\big), \qquad \ell = 0, \dots, L^+-1.
\end{align}
\begin{figure}
  \begin{center}
    \subfigure[Iterative construction of the curve $\gamma_n$. \label{fig: construction gamma n-a}]
    {\resizebox{7cm}{!}{\input{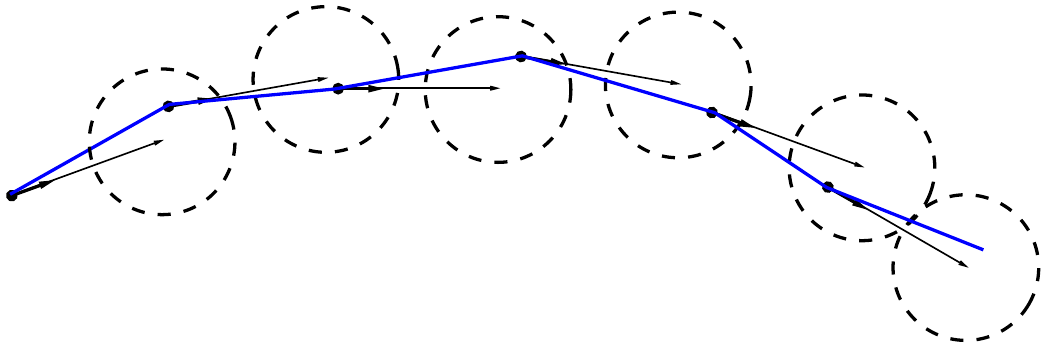_t}}}
    \quad 
    \subfigure[Convergence of $\gamma_n$ as $n$ increases. \label{fig: construction gamma n-b}]
    {\resizebox{7cm}{!}{\input{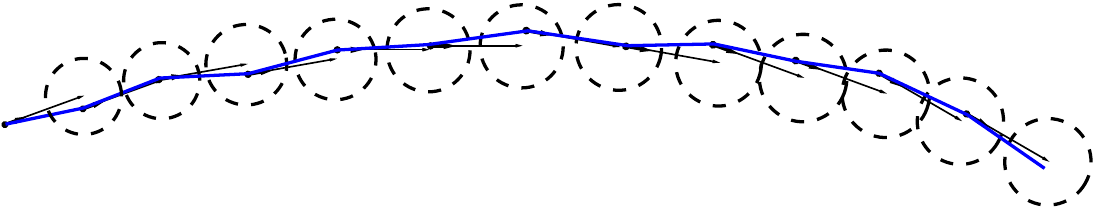_t}}}
    \caption{Construction and convergence of the sequence $(\gamma_n)_n$.}
  \label{fig: construction gamma n}
  \end{center}
\end{figure}
One can
carry on the construction as long as $ x_{n,L^+} \in K$.
We can perform the same construction for $\ell\leq 0$, with the
property 
\begin{align}  
  \label{eq: construction points - away from boundary bis}
x_{n,\ell-1} \in F \cap B\big(x_{n,\ell} - \delta_n X(x_{n, \ell}),
  {\delta_n} / n\big), \qquad |\ell| = 0, \dots, L^--1.
\end{align}
Having $\Norm{X}{}\leq C_K$ on $K$ and $B(x,\eta) \subset K$ ensures that we can construct the sequence at least for 
\begin{align*}  
L^+ = L^- = L_n 
   = \Big\lfloor \frac{ \eta}{\delta_n (C_K + 1)}\Big\rfloor +1
  \leq \Big\lfloor \frac{ \eta}{\delta_n (C_K + 1/ n)} \Big\rfloor +1,
\end{align*}
where $\lfloor . \rfloor$ denotes the floor function. 
With the points $x_{n,\ell}$, $|\ell| \leq L_n$, we have constructed we
define the following continuous curve $\gamma_n(s)$ for
$|s|\leq L_n\delta_n$:
\begin{align*}  
\gamma_n (s) = x_{n,\ell}  + (s-\ell\delta_n) \frac{x_{n,\ell+1} - x_{n,\ell}} { \delta_n}
 \  \text{for}\  s\in [\ell \delta_n, (\ell + 1) \delta _n) \  \text{and}\  |\ell|
  \leq L_n-1.
\end{align*}
This curve and its construction is illustrated in Figure~\ref{fig:
  construction gamma n-a}. 
Note that $\gamma_n(s)$ remains in a compact set, uniformly with
respect to $n$. In this compact set $X$ is uniformly continuous. 

We set $\Slim = \eta /(C_K+1)$. Since $\Slim \leq L_n\delta_n$ we
shall in fact only consider the function $\gamma_n(s)$ for $|s| \leq
\Slim$ in what follows.  
Note that since $x_{n,\ell} \in F$ for $|\ell| \leq L_n$, one has 
\begin{align}
  \label{eq: discrete curve almost in F}
  \dist\big(\gamma_n (s), F\big)\leq  \delta_n ( C_K+ 1 / n), 
  \qquad |s|\leq \Slim.
\end{align}
From \eqref{eq: construction points - away from boundary}, for $\ell
\geq 0$ and  $s\in (\ell \delta_n, (\ell + 1) \delta _n)$, we have 
\begin{align*}  
  \dot{\gamma}_n(s)= \frac{x_{n,\ell+1} - x_{n,\ell}}{\delta_n} =
X(x_{n,\ell}) + \mathcal{O}( 1 / n).
\end{align*}
Similarly, from \eqref{eq: construction points -
  away from boundary bis}, for $\ell
\leq 0$ and  $s\in ((\ell-1) \delta_n, \ell\delta _n)$, we have 
\begin{align*}  
  \dot{\gamma}_n(s)= \frac{x_{n,\ell} - x_{n,\ell-1}}{\delta_n} =
X(x_{n,\ell}) + \mathcal{O}( 1 / n).
\end{align*}
In any case, using the uniform continuity of the
vector field $X$, we find 
\begin{align*}  
  \dot{\gamma}_n(s) 
  = X( \gamma_n(s)) + e_n(s),
\end{align*}
where the error $|e_n|$ goes to zero {\em uniformly} with respect to
$|s|\leq\Slim$ as $n\rightarrow + \infty$.

\medskip
Since the curve $\gamma_n$ is absolutely continuous (and differentiable except at isolated points), we find
\begin{equation}\label{eq.appro}
  \gamma_n(s) 
  = \gamma_n(0) + \int_0^s \dot{\gamma}_n(\sigma)d\sigma 
  = x+  \int_0^s X({\gamma}_n(\sigma)) d\sigma 
  + \int_0^s e_n(\sigma) \, d \sigma.
\end{equation}
We now let $n$ grow to infinity. With \eqref{eq.appro}, 
the family of curves
$(s \mapsto \gamma_n(s), |s| \leq\Slim)_{n\in \mathbb{N}^*}$ is
equicontinuous and pointwise bounded; by the Arzel\`a-Ascoli theorem we can extract a
subsequence $(s \mapsto \gamma_{n_p})_{p \in \mathbb{N}}$ that
converges uniformly to a curve $\gamma(s), |s| \leq \Slim$. Convergence is illustrated in Figure~\ref{fig:
  construction gamma n-b}.  Passing
to the limit $n_p\rightarrow+ \infty$ in~\eqref{eq.appro} we find that
$\gamma(s)$ is solution to
\begin{align*}  
\gamma(s) = x+  \int_0^s X({\gamma}(\sigma))d\sigma.
\end{align*}
From estimation~\eqref{eq: discrete curve almost in F}, for any $|s|
\leq \Slim$, there exists $(y_p)_p \subset F$ such that 
$\lim_{p\rightarrow + \infty} y_p = \gamma(s)$.
Since $F$ is closed we conclude that $\gamma(s) \in F$. 
\end{proof}
  
\medskip
\begin{proof}[Positivity  argument and proof of Proposition~\ref{prop:
    symboles}]
 We
  consider a compact set $K$ where the vector field $X$ does not
  vanish.  By continuity of the vector field there exist $0< c_K\leq  C_K$ such that  $0< c_K \leq\Norm{ X(x)}{}
  \leq C_K$, for all $x \in K$. 

Let us consider $x^0\in K\cap \text{supp}(\mu)$. By performing a
rotation and a dilation by a factor $\Norm{ X(x^0)}{}$, we
can assume that $X(x^0) = (1,0, \dots, 0)\in \R^d$. We shall write $x
= (x_1, x')$ with $x' \in \R^{d-1}$. 

\medskip Let $\chi \in \Cinf(\R)$ be given by 
\begin{equation}
  \label{eq: def chi}
  \chi (s) = \bld{1}_{s<1}\, \exp (1/(s-1)),
\end{equation} 
and 
$\beta \in \Cinf(\R)$ be such that
\begin{equation}
  \label{eq: def beta}
  \beta \equiv 0 \ {\text on}\   ]-\infty, -1],\quad 
 \beta'>0 \ {\text{on}}\  ]-1, -1/2[,\quad
 \beta \equiv 1\ {\text{on}}\  [-1/2, +\infty[.
\end{equation} 
We then set 
\begin{align} 
  \label{eq: def q eps delta x - test function}
  q_{\eps, \delta,x^0} = (\chi \circ v) (\beta \circ w), 
  \quad g_{\eps,\delta,x^0} =(\chi' \circ v) (\beta \circ w)  X v, 
  \quad h_{\eps,\delta,x^0} = (\chi \circ v) (\beta' \circ w) X w, 
  \end{align}
with 
\begin{align*}
  v(x)= 1/2-\delta^{-1} (x_{1}- x^0_{1})
  +8 (\eps \delta)^{-2} \Norm{x'- x^{0\prime}}{}^{2}
  \quad \text{and} \quad 
  w(x)=2\eps^{-1} \big(1  - \delta^{-1} (x_{1}-x^0_{1}) \big),  
\end{align*}
for $\eps>0$ and $\delta>0$ both meant to be chosen small in what follows.
We have $X q_{\eps, \delta,x^0}  =  g_{\eps,\delta,x^0} +
h_{\eps,\delta,x^0}$. 

\medskip
The function $q_{\eps, \delta,x^0} $ is compactly supported. Indeed, in
the support of $\beta\circ w$ one has $w\geq -1$ implying
\begin{align*}
  x_1- x^0_1 \leq \delta (1+ \eps/2),
\end{align*}
while on the support of $\chi \circ v$ one  has $v \leq 1$ which gives
\begin{equation*}
  - 1/2+8  (\eps \delta)^{-2} \Norm{x'- x^{0\prime}}{}^{2}
  \leq \delta^{-1} (x_{1}- x^0_{1}).
\end{equation*}
On the supports of $q_{\eps,\delta,x^0}$ and $(\chi'
\circ v) (\beta \circ w)$ one thus finds
\begin{equation}
  \label{proche}
  - \delta /2  \leq  x_1- x^0_1 \leq \delta ( 1+ \eps/2)
  \ \  \text{and} \ \ 
  8  (\eps \delta)^{-2} \Norm{x'- x^{0\prime}}{}^{2} \leq 3/2 + \eps/2.
\end{equation}
Similarly, on the support of $\beta' \circ w$ one has $-1\leq w \leq
-1/2$ yielding
\begin{align*}
  \delta ( 1+ \eps /4) \leq x_1- x^0_1 \leq \delta (1+ \eps/2),
\end{align*} 
which implies that on the support of
$h_{\eps,\delta,x^0}$ one has
 \begin{equation}\label{proche2}
   \delta ( 1+ \eps /4)  \leq x_1- x^0_1 
   \leq \delta ( 1+ \eps/2)
   \ \  \text{and} \ \ 
   8  (\eps \delta)^{-2} \Norm{x'- x^{0\prime}}{}^{2} \leq 3/2 + \eps/2.
\end{equation}
In particular, in the case $\eps \leq 1$, one finds
 \begin{equation}\label{proche3}
   \supp(h_{\eps,\delta,x^0}) 
   \subset B\big(x^0 + \delta X(x^0), \eps \delta\big).
\end{equation}
These estimations of the supports of $q_{\eps,\delta,x^0}$ and
$h_{\eps,\delta,x^0}$ are illustrated in Figure~\ref{fig: test function supports}.
\begin{figure}
  \begin{center}
    \subfigure[Support of $q_{\eps,\delta,x^0}$. \label{fig: q eps delta support}]
    {\resizebox{7.5cm}{!}{\input{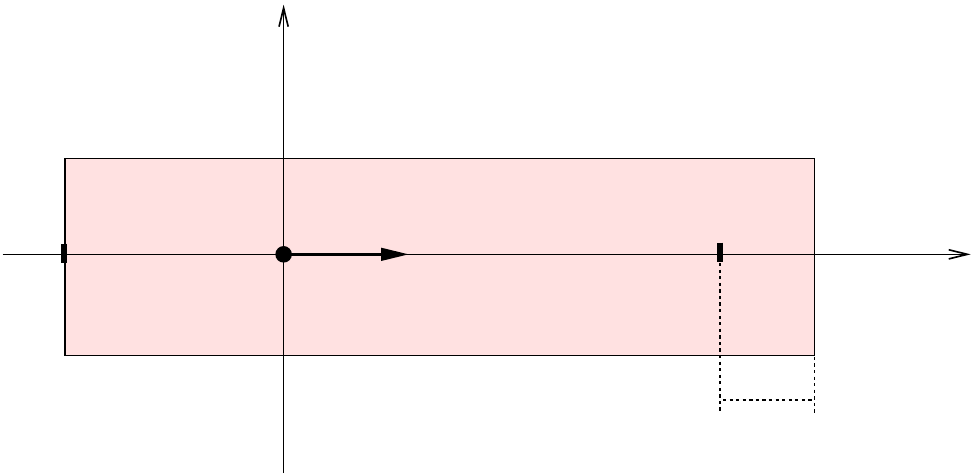_t}}}
    \quad 
    \subfigure[Support of $h_{\eps,\delta,x^0}$. \label{fig: g eps delta support}]
    {\resizebox{7.5cm}{!}{\input{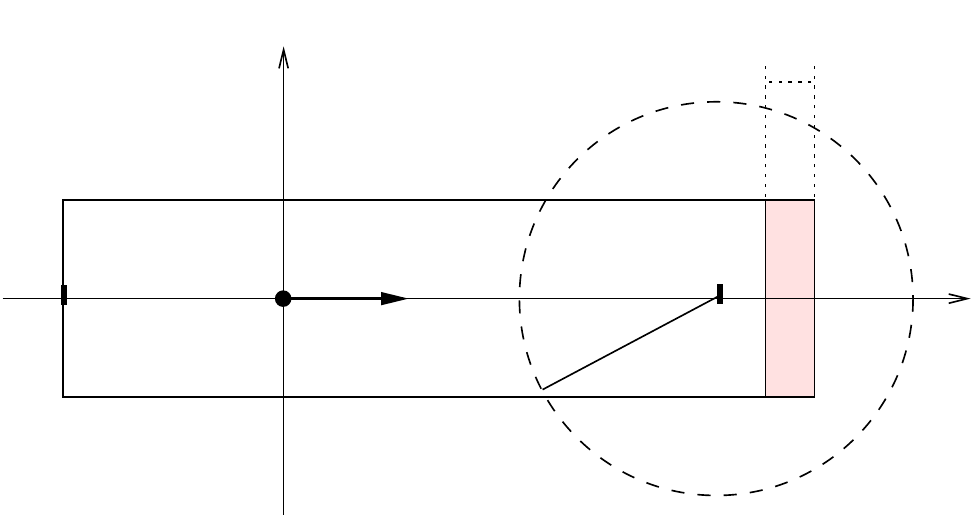_t}}}
    \caption{Estimation of the test function supports in the case $\eps \leq 1$.}
  \label{fig: test function supports}
  \end{center}
\end{figure}
\begin{lemma}
  \label{lem: positivite g}
  For any $0 < \eps \leq 1$ there exists $\delta_0>0$ such that for
  any $x^0\in K$ and $0 < \delta \leq \delta_0$, the function
  $g_{\eps, \delta,x^0}$ is nonnegative.
  Moreover, $g_{\eps, \delta,x^0}$ is positive in a \nhd
  of $x^0$. 
\end{lemma}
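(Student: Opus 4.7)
\textbf{Proof plan for Lemma~\ref{lem: positivite g}.}

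The plan is to exploit the sign structure of $\chi'$ and $\beta$, which forces $g_{\eps,\delta,x^0} = (\chi'\circ v)(\beta\circ w)\,Xv$ to have a definite sign as soon as $Xv$ does. Since $\chi(s) = \bld{1}_{s<1}e^{1/(s-1)}$ gives $\chi'(s)= -(s-1)^{-2}e^{1/(s-1)}\bld{1}_{s<1}\leq 0$ and $\beta\geq 0$, the nonnegativity of $g_{\eps,\delta,x^0}$ reduces to showing $Xv\leq 0$ on the support of $(\chi'\circ v)(\beta\circ w)$. Because the rotation/dilation normalising $X(x^0)$ to $e_1$ depends continuously on $x^0$ and $K$ is compact, uniform continuity of $X$ on a \nhd of $K$ will transfer to the normalised vector fields uniformly in $x^0\in K$, which is what makes the uniformity in $x^0$ in the statement go through.

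First I would compute, using the explicit form of $v$,
\begin{equation*}
  Xv(x) = -\delta^{-1}X_1(x) + 16(\eps\delta)^{-2}\sum_{j\geq 2}X_j(x)(x_j-x^0_j).
\end{equation*}
Next I would use \eqref{proche} (which holds on the support of $(\chi'\circ v)(\beta\circ w)$ and a fortiori on $\supp g_{\eps,\delta,x^0}$) to bound $|x_j-x^0_j|\leq C\eps\delta$ for $j\geq 2$ with $C=\sqrt{(3/2+\eps/2)/8}\leq 1/2$, and $|x_1-x^0_1|\leq \delta(1+\eps/2)$, so that $|x-x^0|\leq C''(\eps)\delta$. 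Factoring out $\delta^{-1}$, the inequality $Xv\leq 0$ rewrites as
\begin{equation*}
  X_1(x) \geq 16\eps^{-2}\delta^{-1}\sum_{j\geq 2}X_j(x)(x_j-x^0_j),
\end{equation*}
and the \rhs is bounded in absolute value by $16 C \eps^{-1}\sum_{j\geq 2}|X_j(x)|$.

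Then I would invoke uniform continuity of the (normalised) vector field $X$ on a compact neighbourhood of $K$: given $\eps\in(0,1]$, fix $\eta = \eps/(64\,C\,(d-1))$ and choose $\delta_0>0$ such that $|x-x^0|\leq C''(\eps)\delta_0$ implies $X_1(x)\geq 1/2$ and $|X_j(x)|\leq\eta$ for $j\geq 2$, \emph{uniformly} in $x^0\in K$ (this uniformity is the one place needing a little care, and it follows from the joint continuity of $(x^0,x)\mapsto X(x)$ composed with the continuous family of normalisations, on the compact set where $x^0\in K$ and $|x-x^0|\leq C''(\eps)\delta_0$). For any $0<\delta\leq\delta_0$ and any $x\in\supp g_{\eps,\delta,x^0}$ one then gets $Xv(x)\leq \delta^{-1}(-1/2 + 16C\eps^{-1}(d-1)\eta)\leq -\delta^{-1}/4<0$, giving $g_{\eps,\delta,x^0}\geq 0$ everywhere.

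Finally, for strict positivity near $x^0$, I would just evaluate at $x=x^0$: $v(x^0)=1/2<1$ gives $\chi'(v(x^0))=\chi'(1/2)<0$; $w(x^0)=2\eps^{-1}\geq 2>-1/2$ gives $\beta(w(x^0))=1$; and the computation above yields $Xv(x^0)=-\delta^{-1}X_1(x^0)\leq -\delta^{-1}/2<0$. Hence $g_{\eps,\delta,x^0}(x^0)>0$, and by continuity of $v$, $w$, $Xv$, and of $\chi',\beta$, the function $g_{\eps,\delta,x^0}$ remains strictly positive on a small neighbourhood of $x^0$. The only genuinely delicate point in this plan is ensuring the uniformity of the modulus of continuity of the normalised vector field over $x^0\in K$; everything else is a direct computation with the explicit formulas for $v$ and $w$.
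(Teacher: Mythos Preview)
Your proof is correct and follows essentially the same approach as the paper: reduce to showing $Xv\leq 0$ on $\supp\big((\chi'\circ v)(\beta\circ w)\big)$ via the signs of $\chi'$ and $\beta$, compute $Xv$ explicitly, use the support estimate \eqref{proche} together with the uniform continuity of $X$ near $K$ to control the error terms, and finally evaluate at $x^0$. The only cosmetic difference is that the paper writes $X(x)=X(x^0)+\alpha^1\partial_{x_1}+\alpha'\cdot\nabla_{x'}$ and obtains $Xv(x)\sim-\delta^{-1}$, whereas you keep the components $X_j(x)$ and make the constants explicit; your remark on the uniformity of the normalisation is exactly the content of the paper's footnote.
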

\begin{proof}
  Let $0 < \eps \leq 1$.  We have
  $g_{\eps, \delta,x^0} = (\chi' \circ v) (\beta \circ w) X v$. Since
  $\beta \geq 0$ and $\chi'<0$ it suffices to prove that $X v(x) \leq 0$
  for $x$ in the support of $(\chi' \circ v) (\beta \circ w)$ for $\delta>0$
  chosen sufficiently small, uniformly with respect to $x^0 \in K$.

  We write 
  \begin{align*}
    X(x) - X(x^0) = \alpha^1(x, x^0) \d_{x_1} + \alpha'(x, x^0) \cdot \nabla_{x'},
  \end{align*}
  with $\alpha^1(x, x^0)\in \R$ and $\alpha'(x, x^0)\in \R^{d-1}$. 
  By \eqref{proche},  for $x \in \supp (\chi' \circ v) (\beta \circ w)$ we
  have $\Norm{ x - x^0}{} \lesssim \delta$. From the uniform continuity of
  $X$ in any compact set we conclude that 
  \begin{align}
    \label{eq: smallness Xx - Xx0}
    |\alpha^1(x, x^0)| + \Norm{\alpha'(x, x^0)}{} = o(1) \ \
    \text{as} \  \delta \to 0^+,
  \end{align}
  uniformly\footnote{Observe that the change of variables made above
    for $X(x^0) = (1,0,\dots,0)$ does not affect uniformity since the
    dilation is made by a factor in $[c_K,C_K]$.} with respect to $x^0\in K$ and $x \in \supp (\chi' \circ v) (\beta \circ w)$.
  Using that $X(x^0) = \d_{x_1}$ and the form of $v$ given above, we write 
  \begin{align*}
    X v(x) &= \big(X(x) v\big)(x) 
    = \big(\d_{x_1} v + \big(X(x) - X(x^0)\big) v\big)(x) \\
    &= - \delta^{-1}\Big( 1 + \alpha^1(x, x^0)  - 16 \eps^{-1}(\eps \delta)^{-1} \alpha'(x,x^0) \cdot (x' -
      x^{0\prime})\Big).
  \end{align*}
  Using again \eqref{proche}, we thus find for $x \in \supp (\chi'
  \circ v) (\beta \circ w)$  
  \begin{align*}
    \big| \alpha^1(x, x^0) 
    -  16 \eps^{-1}(\eps \delta)^{-1} \alpha'(x,x^0) \cdot (x' - x^{0\prime})
    \big| 
    \lesssim \big| \alpha^1(x, x^0) \big| + \eps^{-1} \Norm{\alpha'(x,x^0)}{}.
  \end{align*}
  With $\eps$ fixed above and with \eqref{eq: smallness Xx - Xx0} we
  find that $X v(x) \sim - \delta^{-1}$ as $\delta \to 0^+$
  uniformly with respect to $x^0\in K$ and
  $x \in \supp (\chi' \circ v) (\beta \circ w)$.

  Finally, we have 
  $g_{\eps, \delta,x^0}(x^0) = - \delta^{-1}\chi'(1/2) \beta(2
  \eps^{-1})>0$ and thus $g_{\eps, \delta,x^0}$ is positive in a \nhd
  of $x^0$. 
\end{proof}

We are now in a position to conclude the proof of
Proposition~\ref{prop: symboles}. Note that it suffices to prove the result
for $0 < \eps \leq 1$.  We choose $\delta_0>0$ as given by
Lemma~\ref{lem: positivite g}.
Let then $x^0 \in K \cap \supp(\mu)$. We apply~\eqref{EDO-bis} to the  family  $q_{\eps, \delta, x^0}$ of test
 functions with $0 < \delta \leq \delta_0$:
\begin{equation}\label{somme}
  0= \langle \mu, X(q_{ \eps, \delta, x^0})\rangle 
  = \langle \mu, g_{ \eps, \delta, x^0}\rangle
  +  \langle \mu, h_{ \eps, \delta, x^0}\rangle.
\end{equation}
By Lemma~\ref{lem: positivite g},  $g_{\eps, \delta, x^0}\geq 0$ and $g_{\eps,
  \delta,x^0}$ is positive in a \nhd of $x^0$. As $x^0 \in \supp(\mu)$
we find   $\langle \mu, g_{\eps, \delta, x^0}\rangle>0$. Consequently, 
$\langle \mu, h_{ \eps, \delta, x^0}\rangle\neq 0$. 
By the support estimate for $h_{ \eps, \delta, x^0}$ given in \eqref{proche3} the conclusion follows: $\supp(\mu) \cap
B\big(x^0 + \delta X(x^0), \eps \delta\big)\neq \emptyset$. 
\end{proof}

\section{Exact controllability: proof of Theorem~\ref{theoprinc}}
\label{prooftheoprinc}

Let $ (\k^0,g^0) \in \X^1(\M)$ and assume that $(\omega, T)$ fulfills the
geometric control condition of Definition~\ref{def: Geometric control condition-ter}.

Let also $(\k, g) \in \Y(\M)$. With Proposition~\ref{prop: equiv
  controllability observability}, the result of Theorem
~\ref{theoprinc} follows if we prove that there exists $\eps>0$ and $\Cobs>0$ such
that 
\begin{equation*}
  \E_{\k, g} (u) (0)
    \leq \Cobs
    \Norm{\bld{1}_{(0,T) \times \omega}\, \d_t u}{L^2(\L, \k \mu_g dt)}^2,
\end{equation*}
for any weak solution $u$ of the wave equation associated with
$(\k, g)$ chosen such that
\begin{align*}
  \Norm{(\k, g) - (\k^0,g^0) }{\Y(\M)}\leq \eps.
\end{align*}
The
$L^2$-norm on the \rhs is associated with $(\k,g)$,
that is,
\begin{equation*}
  \Norm{\bld{1}_{(0,T) \times \omega}\, \d_t u}{L^2(\L, \k \mu_g dt)}^2
  = \int_0^T \int_\omega |\d_t u|^2\ \k \mu_g d t.
\end{equation*}
Yet, for $\eps>0$ chosen \suff small one has 
$\Norm{.}{L^2(\L, \k^0 \mu_{g^0})}
  \eqsim
  \Norm{.}{L^2(\L, \k \mu_g)}$,
where $ A \eqsim B$ means $c_1 \leq A/B \leq c_2$ for some $c_1, c_2 >0$. In other words, we have equivalence with constants uniform with respect to $(\k, g)$.
In what follows,  $L^2$- and more generally $H^s$-norms on $\M$ are
chosen with respect to $\k^0 \mu_{g^0}$ unless explicitly written.
Our goal is thus to prove the  observability inequality
\begin{equation}
  \label{equ: comp-obs}
  \E_{\k^0, g^0} (u) (0)
  \leq \Cobs
  \Norm{\bld{1}_{(0,T) \times \omega}\, \d_t u}{L^2(\L)}^2.
\end{equation}
The Bardos-Lebeau-Rauch uniqueness compactness argument reduces the proof
of \eqref{equ: comp-obs} to the proof of
the weaker estimate
\begin{equation}
  \label{equ: compact obs}
  \E_{\k^0, g^0}(u)(0)
  \leq C   \Norm{\bld{1}_{(0,T) \times \omega}\, \d_t u}{L^2(\L)}^2
  + C' \bigNorm{\big(u(0), \d_tu(0)\big)}{L^2(\M) \oplus H^{-1}(\M)}^2,
\end{equation}
which exhibits an additional compact term, and expresses observability
for high-frequencies. Low frequencies are dealt with by means of a
unique continuation argument.

To prove \eqref{equ: compact obs} we argue by contradiction and we
assume that there exists a sequence
$(\k_k,g_k)_{k \in \N} \subset \Y(\M)$ such that
\begin{equation}\label{equ: metriclimit}
\lim_{k\rightarrow + \infty } \Norm{(\k_k, g_k) - (\k^0,g^0)}{\Y(\M)}= 0,
\end{equation}
and yet for each $k \in \N$ the associated observability inequality
does not hold. Thus, 
for each $k \in \N$, there exists a sequence
of initial data  $\big(v^{k,p, 0}, v^{k,p, 1}\big)_{p \in \N} \subset H^1(\M)
\times L^2(\M)$ with associated  solution $(v^{k,p})_{p \in \N}$,
that is,
\begin{align*}
  \begin{cases}
     P_{k} v^{k,p}  =0 
    & \text{in}\ (0,+\infty)\times\M,\\
      {v^{k,p}}_{|t=0}  = v^{k,p, 0}, \ \d_t {v^{k,p}}_{|t=0}  = v^{k,p, 1}
      & \text{in}\ \M,
  \end{cases}
\end{align*}
with $P_k = P_{\k_k, g_k}$, that moreover has the properties
\begin{equation*}
  \E_{\k^0, g^0}(v^{k,p})(0) =1
  \ \ \text{and} \ \
 \Norm{\bld{1}_{(0,T) \times \omega}\, \d_t v^{k,p}}{L^2(\L)}
+ \bigNorm{\big(v^{k,p, 0}, v^{k,p, 1}\big)}{L^2(\M) \oplus H^{-1}(\M)}
\leq \frac{1}{p+1} .
\end{equation*}
We take $p=k$ and we set$\big(u^{k,0}, u^{k,1} \big)= \big(v^{k,k, 0}, v^{k,k, 1}\big)$ and $u^k=v^{k,k}$, one obtains
$P_k u^k =0$   in $\L$ 
and
\begin{equation}\label{mass+obslimit}
  \E_{\k^0, g^0}(u^{k})(0) =1
  \ \ \text{and} \ \
  \Norm{\bld{1}_{(0,T) \times \omega}\, \d_t u^ k}{L^2(\L)}
+ \bigNorm{\big(u^{k,0}, u^{k,1} \big)}{L^2(\M) \oplus H^{-1}(\M)}
\leq \frac{1}{k+1}.
\end{equation}
From~\eqref{mass+obslimit} one has $u^k \rightharpoonup 0$ weakly  in $H^1_{\loc}(\L)$. 
With \eqref{eq: exists mdm mu 0}--\eqref{eq:  exists mdm mu 1}, we can
associate with (a subsequence of) $(u^k)_k$ an $H^1$-microlocal defect
measure $\mu $ on $S^{*} (\L)$. Here, the measure is understood with
respect to $L^2(\L, \k^0 \mu_{g^0} dt)$.
\medskip
From the second part of \eqref{mass+obslimit} one has
\begin{align}
\label{eq: mu vanishes on (0,T)xomega}
  \mu =0\  \text{in} \ S^* ( (0,T) \times \omega).
\end{align}
In fact, for any $\psi \in \Cinf((0,T) \times \omega)$ one has
$\Norm{\psi \d_t u^ k}{L^2(\L)} \sim 0$ and thus
$\dup{\mu}{\tau^2 \psi^2}=0$. Hence,
$\supp(\mu) \cap S^*((0,T) \times \omega) \subset \{ \tau =0\}$. Since
$\{ \tau =0\} \cap \Char(p^0)\cap S^*(\L) = \emptyset$ with \eqref{measure support} one
  obtains \eqref{eq: mu vanishes on (0,T)xomega}.

With the first part of \eqref{mass+obslimit} one has the
following lemma.
\begin{lemma}
  \label{lemma: non vanishing measure}
  The measure $\mu$ does not vanish on $S^*\big( \L \big)$.
\end{lemma}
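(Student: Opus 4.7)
My plan is to argue by contradiction: if $\mu \equiv 0$ on $S^*\L$, I will show that the sequence $(u^k)$ is forced to lose all of its energy on $[0,T]$, which conflicts with the normalization $\E_{\k^0,g^0}(u^k)(0) = 1$ and the near-conservation of energy for $P_k$. The key tool is the standard equivalence: vanishing of an $H^1$-microlocal defect measure on $S^*\L$ is the same as strong $H^1_{\loc}(\L)$-convergence to zero of the underlying sequence.

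\textbf{Step 1 (strong convergence from vanishing of $\mu$).} For a real-valued cutoff $\chi \in \Cinfc(\R)$, viewed as a function on $\L$ (compactly supported since $\M$ is compact), I would consider an operator $Q \in \Psi^2_{c, \phg}(\L)$ whose principal symbol restricted to $S^*\L$ equals $\chi(t)^2$---for instance $Q = \chi(t)\bigl(1 - \Delta_{\L}\bigr)\chi(t)$ with the Laplacian of the product metric $dt^2 + g^0$. Then \eqref{eq: exists mdm mu 1} gives
\[
\dup{Q u^k}{\ovl{u^k}}_{H^{-1}_{\comp}(\L), H^1_{\loc}(\L)} \longrightarrow \dup{\mu}{\chi^2}_{S^*\L} = 0,
\]
and a routine integration by parts (using Rellich to get strong $L^2_{\loc}$-convergence $u^k \to 0$) yields $\Norm{\chi u^k}{H^1(\L)}^2 = \dup{Q u^k}{\ovl{u^k}} + o(1) \to 0$ for every such $\chi$.

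\textbf{Step 2 (energy lower bound and contradiction).} Conservation of energy for $P_k u^k = 0$ gives $\E_{\k_k, g_k}(u^k)(t) = \E_{\k_k, g_k}(u^k)(0)$, and the smallness assumption $\Norm{(\k_k, g_k) - (\k^0, g^0)}{\Y(\M)} \to 0$, combined with an argument in the spirit of Lemma~\ref{equivalence-densites}, shows that $\E_{\k_k, g_k}(u^k)(t)$ and $\E_{\k^0, g^0}(u^k)(t)$ differ by a multiplicative factor $1 + o(1)$ uniformly in $t$. Using the normalization $\E_{\k^0, g^0}(u^k)(0) = 1$, this gives $\E_{\k^0, g^0}(u^k)(t) = 1 + o(1)$ uniformly on $[0,T]$. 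Choosing $\chi \in \Cinfc(\R)$ with $\chi \geq 0$ and $\chi \equiv 1$ on $[0,T]$, an elementary computation yields
\[
\Norm{\chi u^k}{H^1(\L)}^2 \geq 2 \int_0^T \E_{\k^0, g^0}(u^k)(t)\,dt \longrightarrow 2T > 0,
\]
contradicting Step~1. Hence $\mu \not\equiv 0$ on $S^*\L$.

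\textbf{Main obstacle.} The proof is essentially routine: both the ``vanishing $\mu$ $\iff$ strong $H^1_{\loc}$-convergence'' principle and energy conservation for the low-regularity wave equation are standard. The only mildly delicate point is the uniform-in-$t$ equivalence of $\E_{\k_k, g_k}(u^k)(t)$ and $\E_{\k^0, g^0}(u^k)(t)$, which is precisely where the perturbation hypothesis $\Norm{(\k_k, g_k) - (\k^0, g^0)}{\Y(\M)} \to 0$ is crucially used.
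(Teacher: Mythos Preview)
Your proof is correct and follows essentially the same approach as the paper: both pair $u^k$ against a time-localized elliptic second-order operator (the paper uses $\phi^2(-\d_t^2 - A_{\k^0,g^0}+1)$, you use $\chi(1-\Delta_\L)\chi$, with the same principal symbol on $S^*\L$), integrate by parts to recover the time-integrated energy, and then invoke conservation of $\E_{\k_k,g_k}$ together with the $(\k_k,g_k)\to(\k^0,g^0)$ equivalence to see this quantity stays bounded away from zero. The only difference is cosmetic---the paper computes $\dup{\mu}{\phi^2 q}=\Norm{\phi}{L^2}^2>0$ directly, whereas you phrase it as a contradiction---and your handling of the cross term $\chi'\chi\,u^k\,\d_t u^k$ via strong $L^2_{\loc}$-convergence matches the paper's.
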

A proof is given below.

\medskip
We now use Proposition~\ref{prop-int} to obtain a precise description of the
measure $\mu$. First, one has $\supp(\mu) \cap S^*  ((0, T)\times \M)\subset \Char(p^0)$.
Furthermore, one has $\transp{\Hpref} \mu =0$ in the sense of
distributions on $S^*  ((0, T)\times \M)$. Since $\Hpref$ is a $\Con^0$-vector field
on the manifold $S^* \L$, Theorem~\ref{th: ODE} implies that 
$\supp(\mu)$ is a union of maximally
extended \bichars in $S^*  ((0, T)\times \M)$.

Under the geometric control condition of
Definition~\ref{def: Geometric control condition-ter}, any
maximal \bichar meets $S^*((0, T) \times \omega)$ where $\mu$ vanishes
by \eqref{eq: mu vanishes on (0,T)xomega}. Thus $\supp(\mu) =
\emptyset$ yielding a contradiction with the result of
Lemma~\ref{lemma: non vanishing measure}. We thus 
obtain that \eqref{equ: comp-obs}
holds. This concludes the proof of Theorem~\ref{theoprinc}.
\hfill \qedsymbol \endproof

\begin{proof}[Proof of Lemma~\ref{lemma: non vanishing measure}]
  Let  $T_1 < T_2$ and $\phi \in \Cinfc (\R)$ nonnegative and equal to
  $1$ on a \nhd of $[T_1,T_2]$. On $\L$, consider the {\em elliptic}
  operator $Q = -\d^2_t - A_{\k^0,g^0} + 1$ with symbol
  $q = \tau^2 + \sum_{p,q} g^{0\, p,q} (x) \xi_p \xi_q$.  Taking
  \eqref{eq: exists mdm mu 1} and Lemma \ref{equivalence-densites}
  into account one can write
  \begin{align}\label{mu-lim}
    \dup{ \phi^2 Q u^k}{\ovl{u^k}}_{H_{\comp}^{-1}(\L), H_{\loc}^1(\L)}  
    \ \ \mathop{\sim}_{k\to + \infty} \ \
      \dup{\mu}{\phi^2 q}_{S^*\L}.
  \end{align}
  Integrating by parts one obtains
 \begin{align*}
  &\dup{ \phi^2 Q u^k}{\ovl{u^k}}_{H_{\comp}^{-1}(\L), H_{\loc}^1(\L)}  \\
   &\qquad \quad
     = \int_\L \phi (t)^2\Big(|\d_tu^k|^2
     + g^0 (\nabla_{\! g^0}u^k,  \nabla_{\! g^0}\overline{u^k}) + |u^k|^2 \Big)
     \k^0 \mu_{g^0} d t
     +  2 \inp{\phi' \phi\ \d_tu^k}{u^k}_{L^2(\L)}\\
   &\qquad \quad
     =\int_\R \phi(t)^2 \E_{\k^0, g^0}(u^k)(t) dt
     +  2 \inp{\phi' \phi \ \d_tu^k}{u^k}_{L^2(\L)}.
 \end{align*}
 Since the energy built on $\k^p, g^p$ is preserved by the evolution given by $P_p$, we have  by \eqref{mass+obslimit}
 \begin{multline}
 \E_{\k^0, g^0}(u^k)(t) = \E_{\k^k, g^k}(u^k)(t)+ o(1)= \E_{\k^k, g^k}(u^k)(0)+ o(1)= \E_{\k^0, g^0}(u^k)(0) + o(1)\\
 =1+ o(1)\end{multline}
 and since $\inp{\phi' \phi \ \d_tu^k}{u^k}_{L^2(\L)}\to 0$ as $u^k
 \to 0$ strongly in $L^2_{\loc}(\L)$, one obtains
   \begin{align*}
    \dup{ \phi^2 Q u^k}{\ovl{u^k}}_{H_{\comp}^{-1}(\L), H_{\loc}^1(\L)} 
   \ \ \mathop{\sim}_{k\to + \infty} \Norm{\phi}{L^2(\R)}^2.
\end{align*}
With \eqref{mu-lim} this proves that $\mu \neq 0$.
\end{proof}

\section{Lack of continuity of the control operator with respect to 
  coefficients}
\label{sec: lack continuity HUM}
\subsection{Proof of Theorems~\ref{HUM operator} and \ref{HUM operator bis}}
\label{sec: proof HUM operator}

We prove the result of both theorems, that is, in the case $k\geq
1$. In the case $k=1$ we are simply required to prove additionally  that the
geometric control condition of Definition~\ref{def: Geometric
  control condition-thm} is fulfilled for geodesics given by the
chosen metric $\tg$; see Remark~\ref{rem: HUM operator - GCC}.

Let $\eps>0$. We set $\tg = (1+\eps)g$. Given
any neighborhood $\mathcal{U}$ of $( \k , g)$ in $\X^k(\M)$, for
$\eps>0$ chosen \suff small one has $(\k, \tg) \in \mathcal{U}$.

Moreover, observe that for $\eps>0$ chosen \suff small geodesics
associated with $\tg$ can be made arbitrarily close to those associated
with $g$ uniformly in $t \in [0,T]$. Hence, for such $\eps>0$ 
the geometric control condition is fulfilled for geodesics associated
with $\tg$.

Observe that one has 
\begin{align}
  \label{eq: non inersection characteristic sets}
  \Char(p_{\kappa, g}) \cap  \Char(p_{\kappa,\tg})\cap S^* \L = \emptyset. 
\end{align}

We consider a sequence $(y^{k,0},y^{k,1}) \rightharpoonup (0,0)$
weakly in $H^1 (\M)\oplus L^2(\M)$ such that
 \begin{align*}
   \frac12 (\Norm{y^{k,0}}{H^1(\M)}^2
   + \Norm{y^{k,1}}{L^2(\M)}^2 )=1.
 \end{align*}
$L^2$- and $H^1$-norms are based on the $\k \mu_g dt$ measure on $\L$.
 
 Setting
 $f_{\k, g}^k = H_{\k,g}(y^{k,0},y^{k,1}) \in L^2((0,T)\times \M)$
 with $H_{\k, g}$ defined in \eqref{eq: def HUM operator}, 
 one obtains a sequence of control functions.
According to the HUM
 method \cite{Lions}, $f_{\kappa, g}^k$  is itself a (weak) solution to the
 following free wave equation
 \begin{equation}
   \label{eq-control}
   P_{\kappa, g} f_{\kappa, g}^k =0,
 \end{equation}
 in the energy space $L^2(\M)\oplus H^{-1}(\M)$, that is,
 $(f_{\kappa, g}^k(0), \d_tf_{\kappa, g}^k(0)) \in L^2(\M)\times
 H^{-1}(\M)$. Moreover, $(f_{\kappa, g}^k(0), \d_tf_{\kappa, g}^k(0))$
 depend continuously on $(y^{k,0},y^{k,1})$. The function $f_{\kappa,
   g}^k$ is thus bounded in $\Con^0((T_1, T_2), L^2(\M))$ uniformly
 with respect to $k$ for any $T_1 < T_2$. 
 Since the map $H_{\k,g}$ is continuous $f_{\kappa, g}^k
 \rightharpoonup 0$   weakly  in $L^2_{\loc}(\L)$. Up to
extraction of a subsequence, it is associated with an $L^2$-microlocal
defect measure $\mu_f$. With Proposition~\ref{prop-int-bis} one has
\begin{equation}
  \label{eq: location supp mu f}
  \supp(\mu_f) \subset \Char(p_{\k,g}).
 \end{equation}

 We consider the sequences of solutions $(y^k)_k$ and $(\ty^k)_k$ to 
\begin{equation*}
  \begin{cases}
    P_{\k, g} y^k
    =  \bld{1}_{(0,T) \times \omega} \,f_{\kappa, g}^k & \text{in} \  \L,\\
    (y^k,\d _{t}y^k)_{|t=0}=(y^{k,0},y^{k,1})& \text{in} \ \M,
  \end{cases}
  \qquad 
  \begin{cases}
    P_{\k, \tg} \ty^k
    =  \bld{1}_{(0,T) \times \omega} \, f_{\kappa, g}^k  & \text{in} \  \L,\\
    (\ty^k,\d _{t}\ty^k )_{|t=0}=(y^{k,0},y^{k,1})& \text{in} \ \M.
  \end{cases}
\end{equation*}
Both are bounded and weakly converge to $0$ in $H^1_{\loc}(\L)$. Up
to extraction of subsequences, both are associated with
$H^1$-microlocal defect density measures $\mu$ and $\tmu$
respectively.  Since
$\bld{1}_{(0,T) \times \omega} f_{\kappa, g}^k \rightharpoonup 0$
weakly in $L^2_{\loc}(\L)$, we have
$\bld{1}_{(0,T) \times \omega} f_{\kappa, g}^k\rightarrow 0$ strongly
in $H^{-1}_{\loc}(\L)$ and, with Proposition~\ref{prop-int}, one finds
$\supp(\tmu) \subset \Char(p_{\k,\tg})$.
Thus one has 
\begin{align}
  \label{eq: support t mu - pertub result}
  \supp(\tmu) \cap \supp(\mu_f) = \emptyset.
\end{align}
The sequence $(\d_t \ty^k)$ converges to 0 weakly in $L^2_{\loc}(\L)$ and can
be associated with an $L^2$-microlocal
defect density measure whose support is given by $\supp(\tmu)$. 
\begin{lemma}
  \label{lemma: mesure croisee}
  One has $\inp{\bold{1}_{(0,T)\times \omega}\ f^k_{\kappa, g}}
  {\d_t \ty^k}_{L^2(\L, \k \mu_{\tg} dt)} \to 0$ as $k \to +\infty$.
\end{lemma}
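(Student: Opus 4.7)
The plan is to exploit the microlocal disjointness of the two sequences. By \eqref{eq: location supp mu f}, $\supp(\mu_f) \subset \Char(p_{\k, g}) \cap S^*\L$, while the $L^2$-microlocal defect measure associated with $(\d_t\ty^k)_k$ has support contained in $\supp(\tmu) \subset \Char(p_{\k, \tg}) \cap S^*\L$. Thanks to \eqref{eq: non inersection characteristic sets}, these two subsets of $S^*\L$ are disjoint, and moreover over the compact region $[0,T]\times\bar\omega$ their relevant portions form disjoint compact sets.

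The first step would be to construct two pseudo-differential cutoffs $A, B \in \Psi^0_{c, \phg}(\L)$, both with kernels supported in a fixed compact neighborhood of $[0,T]\times\bar\omega$, with principal symbols $a, b \in \Cinfc(S^*\L)$ satisfying $a \equiv 1$ on a neighborhood (in $S^*\L$) of $\supp(\mu_f) \cap S^*([0,T]\times\bar\omega)$, $b \equiv 1$ on a neighborhood of $\supp(\tmu) \cap S^*([0,T]\times\bar\omega)$, and $a\,b \equiv 0$ everywhere on $S^*\L$. This is possible thanks to the disjointness and compactness above. One then decomposes
\begin{equation*}
  I^k = \inp{\bld{1}_{(0,T)\times\omega} A f_{\k,g}^k}{B \d_t \ty^k}_{L^2}
  + \inp{\bld{1}_{(0,T)\times\omega} (I-A) f_{\k,g}^k}{\d_t \ty^k}_{L^2}
  + \inp{\bld{1}_{(0,T)\times\omega} A f_{\k,g}^k}{(I-B)\d_t \ty^k}_{L^2}.
\end{equation*}
The last two terms tend to zero as $k \to \infty$: since $1-a$ vanishes on $\supp(\mu_f)$ in the relevant microlocal region and $A$ has compactly supported kernel, the defining property of the $L^2$-MDM gives $\Norm{(I-A)f_{\k,g}^k}{L^2_{\loc}}^2 \to \dup{\mu_f}{|1-a|^2}_{S^*\L} = 0$, so $(I-A)f_{\k,g}^k \to 0$ strongly in $L^2_{\loc}(\L)$; paired with $\bld{1}_{(0,T)\times\omega}\d_t\ty^k$ (bounded in $L^2$) this yields $o(1)$, and similarly for the term involving $(I-B)\d_t\ty^k$.

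For the main term, rewrite $\inp{\bld{1}_{(0,T)\times\omega} A f_{\k,g}^k}{B \d_t\ty^k} = \inp{f_{\k,g}^k}{A^* \bld{1}_{(0,T)\times\omega} B \d_t\ty^k}$ and approximate the indicator by a smooth cutoff $\chi_\eta \in \Cinfc(\L)$ with $0 \leq \chi_\eta \leq 1$ and $\chi_\eta \to \bld{1}_{(0,T)\times\omega}$ pointwise and in every $L^p(\L)$ with $p < \infty$. For each fixed $\eta > 0$ the operator $A^* \chi_\eta B$ has principal symbol $\bar a\,\chi_\eta\,b \equiv 0$ (since $ab \equiv 0$), hence belongs to $\Psi^{-1}_{c, \phg}(\L)$ with compactly supported kernel, and is therefore compact on $L^2(\L)$. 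Consequently $A^* \chi_\eta B \d_t\ty^k \to 0$ strongly in $L^2_{\loc}(\L)$ as $k \to \infty$, and the pairing with $f_{\k,g}^k \rightharpoonup 0$ tends to zero.

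The hard part is the residual error $|\inp{A f_{\k,g}^k}{(\bld{1}_{(0,T)\times\omega}-\chi_\eta)B\d_t\ty^k}|$, which must be made small uniformly in $k$ as $\eta \to 0$. I would take $\chi_\eta$ of product form $\phi_\eta(t)\psi_\eta(x)$ with $\phi_\eta \in \Cinfc((0,T))$ and $\psi_\eta \in \Cinfc(\omega)$ approximating $\bld{1}_{(0,T)}$ and $\bld{1}_\omega$ from below, and split the error into a contribution on thin time-strips near $\{0,T\}$ and one on thin spatial-strips near $\partial\omega$. The time-strip part is $O(\eta)$ since $f_{\k,g}^k$ and $\d_t\ty^k$ are uniformly bounded in $\Con^0([0,T]; L^2(\M))$. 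The spatial-strip term is the delicate point; it is controlled by a bilinear refinement of the MDM formalism (the joint measure of two sequences with disjoint MDM supports vanishes) followed by a diagonal extraction $\eta = \eta(k) \to 0$ chosen slow enough. This concludes $I^k \to 0$.
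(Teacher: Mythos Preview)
The paper's route is considerably shorter. It invokes G\'erard's result \cite[Proposition~3.1]{Ge91}, recorded here as Lemma~\ref{lemGe}: if two weakly-null $L^2$-sequences have microlocal defect measures with disjoint supports, then $(\psi u_k,v_k)_{L^2}\to 0$ for \emph{every} $\psi\in\Con^0_c$. With \eqref{eq: support t mu - pertub result} already in hand, this disposes of the pairing against any continuous compactly supported cutoff in one line; the only remaining work is to approximate $\bld{1}_{(0,T)\times\omega}$ by such a $\psi$ with a uniform-in-$k$ error, for which the paper uses the $\Con^0([0,T];L^2(\M))$ bounds on $f^k_{\k,g}$ and $\d_t\ty^k$.

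Your route instead rebuilds the microlocal separation by hand with the cutoffs $A,B$; steps~1--5 are essentially a reproof of Lemma~\ref{lemGe}. Step~4 is imprecise but fixable: $I-A$ does not have compactly supported kernel and $1-a$ was only made to vanish on $\supp(\mu_f)$ over $[0,T]\times\bar\omega$, so $\dup{\mu_f}{|1-a|^2}_{S^*\L}$ need not be zero; you must insert a smooth cutoff $\phi\equiv 1$ near $[0,T]\times\bar\omega$ and arrange $a\equiv 1$ on $\supp(\mu_f)\cap S^*(\supp\phi)$ to get $\phi(I-A)f^k_{\k,g}\to 0$ strongly. The genuine gap is step~6. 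Once the indicator is sandwiched as $A^*\,\bld{1}_{(0,T)\times\omega}\,B$, your time-strip estimate no longer follows from $\Con^0([0,T];L^2(\M))$ bounds on $f^k_{\k,g}$ and $\d_t\ty^k$, because $A$ and $B$ mix the $t$ and $x$ variables. For the spatial strip you offer only ``a bilinear refinement of the MDM formalism \dots\ followed by a diagonal extraction $\eta=\eta(k)$'': the refinement you allude to \emph{is} Lemma~\ref{lemGe}, which again requires a continuous test function, so this is circular; and a diagonal extraction yields at best a sub-subsequence, not the claimed limit. The paper avoids all of this by approximating the indicator \emph{first} and only then invoking the microlocal orthogonality.
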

A proof is given below.

  Using the density of strong solutions of the wave equation, with
  integration by parts, one finds the  classical energy estimate
  \begin{equation*}
    \mathcal{E}_{\k, \tg}(\ty^k)(T)- \mathcal{E}_{\k, \tg}(\ty^k)(0)
    = \inp{\bold{1}_{(0,T)\times \omega}\ f^k_{\kappa, g}}
      {\d_t \ty^k}_{L^2(\k \mu_{\tg} dt)}.
    \end{equation*}
With Lemma~\ref{lemma: mesure croisee} one obtains
\begin{align*}
  \mathcal{E}_{\k, \tg}(\ty^k)(T) \ \  \mathop{\sim}_{k \to +\infty} \ \
  \mathcal{E}_{\k, \tg}(\ty^k)(0).
\end{align*}
With the form of $\tg$ chosen above one has
\begin{equation*}
  \mathcal{E}_{\k, g}(\ty^k)(t) = (1+\mathcal{O}(\eps) )
  \mathcal{E}_{\k, \tg}(\ty^k)(t), 
\end{equation*}
uniformly with respect to $t \in [0,T]$. Choosing $\eps>0$ \suff small
and $k$ \suff large, 
the first part of Theorem~\ref{HUM operator}  follows since $\mathcal{E}_{\k, g}(\ty^k)(0)=1$.

\medskip
We use the values of $\eps$ and $k$ chosen above. 
To prove \eqref{HUM-noncontinu}, we write $\ty^k$ in the form
$\ty^k=v_1 + v_2$  where
$v_1$ and $v_2$ are solutions to
\begin{equation}
  \label{eq: equa-difference}
  \begin{cases}
    P_{\k, \tg} v_1 
    =  \bld{1}_{(0,T) \times \omega} \,f_{\kappa, \tg}^k & \text{in} \  \L,\\
    (v_1,\d _{t}v_1)_{|t=0}=(y^{k,0},y^{k,1})& \text{in} \ \M,
  \end{cases}
  \qquad 
  \begin{cases}
    P_{\k, \tg} v_2
    =  \bld{1}_{(0,T) \times \omega}  ( f_{\kappa, g}^k - f_{\kappa, \tg}^k) & \text{in} \  \L,\\
    (v_2,\d _{t}v_2)_{|t=0}=(0,0)& \text{in} \ \M.
  \end{cases}
\end{equation}
with $f_{\kappa, \tg}^k = H_{\k, \tg} (y^{k,0},y^{k,1})$.
A  hyperbolic energy estimation for the solution $v_2$  to the second
equation in \eqref{eq: equa-difference} gives
\begin{align*}
  \mathcal{E}_{\k, \tg}(v_2)(T)
  \leq C_T \Norm{\bld{1}_{(0,T) \times \omega} (f_{\kappa, g}^k -
  f_{\kappa, \tg}^k)}{L^2 (\L)}^2.
\end{align*}
Since one has $(v_1(T),\d _{t}v_1(T))=(0,0)$ because of the definition of
$f_{\kappa, \tg}^k$ one finds
\begin{align*}
  \mathcal{E}_{\k, \tg}(v_2)(T) = \mathcal{E}_{\k,\tg}(\ty^k)(T)
  \geq 1/2,
\end{align*}
which gives the
second result of Theorem~\ref{HUM operator}.
\hfill \qedsymbol \endproof

\begin{proof}[Proof of Lemma~\ref{lemma: mesure croisee}] The key
  point in the proof is the following lemma.
\begin{lemma}[\protect{\cite[Proposition 3.1]{Ge91}}]\label{lemGe}
Assume that $u_k$ and $v_k$ are two sequences bounded in $L^2_{loc} $
that converge weakly to zero and are associated 
with defect measures $\mu$ and $\nu$ respectively. Assume that $\mu
\perp \nu$, that is, $\mu$ and $\nu$ are supported on disjoint
sets. Then, for any $\psi \in \Con_c^0$, 
$$ \lim_{k\rightarrow +\infty} (\psi u_k, v_k)_{L^2} =0 . $$
\end{lemma}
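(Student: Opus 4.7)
The plan is to exploit the disjointness $\supp(\mu) \cap \supp(\nu) = \emptyset$ to construct a pseudo-differential cutoff of order zero that microlocally separates the two sequences, so that each piece of the pairing can be controlled by Cauchy--Schwarz combined with the defect measure formula~\eqref{eq: exists mdm mu 0}. First, I would absorb the continuous weight $\psi$ into the sequences: pick $\chi \in \Cinfc$ with $\chi \equiv 1$ on $\supp(\psi)$ and write
\[
  (\psi u_k, v_k)_{L^2} = (\tilde u_k, \tilde v_k)_{L^2}, \qquad \tilde u_k := \psi u_k, \ \tilde v_k := \chi v_k.
\]
Both sequences are bounded in $L^2$, supported in a fixed compact set, and converge weakly to $0$. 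Up to extraction they admit defect measures $\tilde\mu$ and $\tilde\nu$; testing against $Q \in \Psi^0_{c,\phg}$ and using a uniform-approximation argument (since $\psi$ is merely $\Con^0$) yields
\[
  \supp(\tilde\mu) \subset \supp(\mu) \cap \pi^{-1}(\supp\psi), \qquad \supp(\tilde\nu) \subset \supp(\nu) \cap \pi^{-1}(\supp\chi).
\]
In particular $\supp(\tilde\mu)$ and $\supp(\tilde\nu)$ are disjoint compact subsets of the cosphere bundle.

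Next, by a smooth partition-of-unity argument on the cosphere bundle applied to these two disjoint compact sets, I would choose a real classical symbol $a \in S^0_{c,\phg}$ with $a \equiv 1$ on a neighborhood of $\supp(\tilde\mu)$ and $a \equiv 0$ on a neighborhood of $\supp(\tilde\nu)$, and set $A = \Op(a) \in \Psi^0_{c,\phg}$. I then split
\[
  (\tilde u_k, \tilde v_k)_{L^2} = \bigl((I - A)\tilde u_k, \tilde v_k\bigr)_{L^2} + \bigl(\tilde u_k, A^* \tilde v_k\bigr)_{L^2},
\]
and estimate each piece by Cauchy--Schwarz. Since $(I-A)^*(I-A) \in \Psi^0_{c,\phg}$ has principal symbol $|1-a|^2$, formula~\eqref{eq: exists mdm mu 0} gives
\[
  \bigNorm{(I-A)\tilde u_k}{L^2}^2 = \bigl((I-A)^*(I-A)\tilde u_k, \tilde u_k\bigr)_{L^2} \ \mathop{\longrightarrow}_{k\to +\infty}\ \int |1-a|^2 \, d\tilde\mu = 0,
\]
because $1-a$ vanishes on a neighborhood of $\supp(\tilde\mu)$. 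Symmetrically, $AA^*$ has principal symbol $|a|^2$, so $\bigNorm{A^* \tilde v_k}{L^2}^2 \to \int |a|^2\, d\tilde\nu = 0$. Together with the uniform $L^2$-bound on both sequences, this forces both pieces of the splitting to tend to zero, whence $(\psi u_k, v_k)_{L^2} \to 0$.

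The only slightly delicate point is the support transfer $\supp(\tilde\mu) \subset \supp(\mu)$ in step one: since $\psi$ is only continuous, $\psi Q \psi$ is not a classical pseudo-differential operator and one cannot directly identify the defect measure of $\psi u_k$ with $|\psi|^2 \mu$. I would handle this by approximating $\psi$ uniformly by $\psi_\varepsilon \in \Cinfc$ with supports in a fixed neighborhood of $\supp(\psi)$; the error
\[
  \bigl|(Q\psi u_k, \psi u_k)_{L^2} - (Q\psi_\varepsilon u_k, \psi_\varepsilon u_k)_{L^2}\bigr| \leq C \Norm{\psi - \psi_\varepsilon}{L^\infty} \sup_k \Norm{u_k}{L^2(K)}^2
\]
can be made arbitrarily small, uniformly in $k$. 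Beyond this technical point, the argument is a direct application of the $\Psi^0$-calculus recalled in Section~\ref{sec: measures symbols operators}.
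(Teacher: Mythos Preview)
The paper does not give its own proof of this lemma; it is quoted verbatim from G\'erard \cite[Proposition~3.1]{Ge91} and used as a black box in the proof of Lemma~\ref{lemma: mesure croisee}. Your argument is the standard one (and essentially G\'erard's): localize in the base to reduce to compactly supported sequences, separate the two disjoint compact supports in the cosphere bundle by a zeroth-order symbol, split the pairing accordingly, and kill each piece by Cauchy--Schwarz together with the defect-measure formula~\eqref{eq: exists mdm mu 0}.

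Two small remarks. First, the phrase ``up to extraction'' is unnecessary and slightly misleading: your own approximation argument with $\psi_\varepsilon$ actually shows that $\psi u_k$ admits the defect measure $|\psi|^2\mu$ along the \emph{full} sequence (and $\chi v_k$ has $|\chi|^2\nu$ since $\chi$ is smooth), so no subsequence trick is needed to conclude. Second, in the step $\|(I-A)\tilde u_k\|^2 \to \int|1-a|^2\,d\tilde\mu$, note that $I$ is not in $\Psi^0_{c,\phg}$; one should replace $I$ by a cutoff $\varphi\in\Cinfc$ equal to $1$ on the common spatial support, which is harmless since $\varphi\tilde u_k=\tilde u_k$ and $\varphi\equiv 1$ on $\pi(\supp\tilde\mu)$. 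With these cosmetic adjustments your proof is complete.
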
 
To apply this result, we just need to exchange the rough cutoff  $\bold{1}_{(0,T)\times \omega}$ for a smooth cutoff $\psi(t,x)$. 
  First, note that one has
  \begin{align*}
    \inp{\bold{1}_{(0,T)\times \omega}\ f^k_{\kappa, g}}
    {\d_t \ty^k}_{L^2(\L, \k \mu_{\tg} dt)}
    \ \ \mathop{\sim}_{k \to +\infty} \ \
    \inp{\bold{1}_{(0,T)\times \omega}\ f^k_{\kappa, g}}
    {\d_t \ty^k}_{L^2(\L, \k \mu_{g} dt)}.
  \end{align*}
  We may thus simply consider the $L^2$-norm and inner product
  associated with $\k \mu_{g} dt$.
  
  Second, let $\delta>0$. Since $(f^k_{\kappa, g})_k$ and $(\ty^k)_k$ are both
  bounded  in $\Con^0((0,T), L^2(\M))$ uniformly with respect to $k$,
  there exists $0< T_1 < T_2 < T$ and $\O \Subset \omega$ such that
    \begin{align*}
     \iint_{K}| f^k_{\kappa, g}|
      |\d_t\ty_k| \k\mu_{g}dt\leq \delta.
    \end{align*}
    with $K = \big( (0,T)\times \omega \big) \setminus \big(  (T_1,
    T_2)\times \O \big)$. 
    Let $\psi \in \Cinfc((0,T)\times \omega)$ such that $0\leq \psi\leq 1$ and
    equal to 1 in a \nhd of $[T_1,T_2]\times \O$. One thus has
    \begin{align*}
      \big| \inp{\bold{1}_{(0,T)\times \omega}\ f^k_{\kappa, g}}
      {\d_t \ty^k}_{L^2(\L)} \big|
      &\leq
      \big| \inp{\psi  f^k_{\kappa, g}}{\d_t \ty^k}_{L^2(\L)} \big|
      +
      \big| \inp{(\bold{1}_{(0,T)\times \omega} - \psi) f^k_{\kappa, g}}
      {\d_t \ty^k}_{L^2(\L)} \big|\\
      &\leq
      \big|  \inp{\psi  f^k_{\kappa, g}}{\d_t \ty^k}_{L^2(\L)} \big|
      +
      \delta. 
    \end{align*}
With \eqref{eq: support t mu - pertub result} and Lemma~\ref{lemGe}, one finds
\begin{align}
  \label{eq: mesure croisee}
  \inp{\psi  f^k_{\kappa, g}}
  {\d_t \ty^k}_{L^2(\L)}
  \ \ \mathop{\to}_{k \to +\infty} \ \ 0,
\end{align}
and the conclusion of the lemma follows.
\end{proof}

\subsection{Proof of Theorem ~\ref{Control of smooth data}}
\label{sec: proof Control of smooth data}

We consider first the case $\alpha =1$. As proven  in \cite{DL:09} one
has $f^{y^0,y^1}_{\k, g} \in
\Con^0([0,T], H^1(\M))$ and the estimate
\begin{align*}
  \Norm{f^{y^0,y^1}_{\k, g}}{L^\infty(0,T; H^1(\M))} \lesssim
  \Norm{(y^0,y^1)}{H^2(\M)\oplus H^1(\M)}.
\end{align*}
With this regularity of the source term in the right-hand-side of the
wave equations in \eqref{eq: stab-HUM}, one finds 
$y, \ty\in \Con^0([0,T], H^2(\M))$.
Computing the difference in \eqref{eq: stab-HUM} one writes
\begin{equation}
P_{\k, g}(y-\ty) =   (A_{\k, g} - A_{\tk, \tg}) \ty .
\end{equation}
A hyperbolic energy estimate yields
\begin{align*}
  \E_{\k, g}(y-\ty)(T)^{1/2}
  &\lesssim 
\Norm{(A_{\k, g} - A_{\tk, \tg})\ty}{L^\infty(0,T;L^2(\M))}
\lesssim \Norm{ (\k, g) -  (\tk, \tg) }{\X^1}
\Norm{\ty}{L^\infty(0,T;H^2(\M))}\\
&\lesssim \Norm{ (\k, g) -  (\tk, \tg) }{\X^1}
\Norm{f^{y^0,y^1}_{\k, g}}{L^\infty(0,T; H^1(\M))}\\
  &\lesssim
  \Norm{ (\k, g) -  (\tk, \tg) }{\X^1}
  \Norm{(y^0,y^1)}{H^2(\M)\oplus H^1(\M)}.
\end{align*}
In the case  $\alpha = 0$, one writes
\begin{align*}
  \E_{\k, g}(y-\ty)(T)^{1/2}
  &\lesssim \E_{\k,g}(y)(T)^{1/2}+\E_{\k, g}(\ty)(T)^{1/2}
  \lesssim \E_{\k,g}(y)(T)^{1/2}+\E_{\tk, \tg}(\ty)(T)^{1/2}\\
  &\lesssim
   \Norm{(y^0,y^1)}{H^1(\M)\oplus L^2(\M)}.
  \end{align*}
Finally, the result follows from interpolation between the two cases
$\alpha = 0$ 	and $\alpha = 1$.\hfill \qedsymbol \endproof


\begin{thebibliography}{10}

\bibitem{AmCr14}
L.~Ambrosio and G.~Crippa. 
\newblock Continuity equations and {ODE} flows with non-smooth velocity
\newblock {\em Proc. R. Soc. Edinb., Sect. A, Math.}, 144(6):
1191--1244, 2014.


\bibitem{BLR:87}
C.~Bardos, G.~Lebeau, and J.~Rauch.
\newblock Un exemple d'utilisation des notions de propagation pour le
  contr\^ole et la stabilisation de probl\`emes hyperboliques.
\newblock {\em Rend. Sem. Mat. Univ. Politec. Torino}, (Special Issue):11--31
  (1989), 1988.
\newblock Nonlinear hyperbolic equations in applied sciences.

\bibitem{BLR:92}
C.~Bardos, G.~Lebeau, and J.~Rauch.
\newblock Sharp sufficient conditions for the observation, control and
  stabilization of waves from the boundary.
\newblock {\em SIAM J. Control and Optim.}, 30(5):1024--1065, 1992.

\bibitem{Burq}
N.~Burq.
\newblock Contr\^olabilit\'e exacte des ondes dans des ouverts peu r\'eguliers. 
\newblock {\em Asymptotic Analysis}, 14: 157--191, 1997.

\bibitem{Bu97} N.  Burq,
\newblock Mesures semi-classiques et mesures de d\'efaut
\newblock {\em S\'eminaire Bourbaki, Ast\'erisque} Vol. 1996/97.  No. 245, Exp. No. 826, 4, 167--195, 1997.

\bibitem{B-D-LR1}
N.~Burq, B. ~Dehman and J.~Le Rousseau.
\newblock Measure and continuous vector field at a boundary {I}:
  propagation equation and wave observability, preprint 2024, arXiv 2407.02255


\bibitem{B-D-LR2}
N.~Burq, B.~Dehman and J.~Le Rousseau.
\newblock Measure and continuous vector field at a boundary {II}:
  geodesics and support propagation, preprint 2024, arXiv 2407.02259


\bibitem{BurqGerard}
N.~Burq and P.~G{\'e}rard.
\newblock Condition n\'ecessaire et suffisante pour la contr\^olabilit\'e
  exacte des ondes.
\newblock {\em C. R. Acad. Sci. Paris S\'er. I Math.}, 325(7):749--752, 1997.

\bibitem{CastroZuazua02}
C.~Castro and E.~Zuazua.
\newblock Concentration and lack of observability of waves in highly
  heterogeneous media.
\newblock {\em Arch. Ration. Mech. Anal.}, 164(1):39--72, 2002.

\bibitem{CZ07}
C.~Castro and E.~Zuazua.
\newblock Addendum to: ``{C}oncentration and lack of observability of waves in
  highly heterogeneous media'', [{A}rch. {R}ation. {M}ech. {A}nal. {\bf 164}
  (2002), no. 1, 39--72],
\newblock {\em Arch. Ration. Mech. Anal.}, 185(3):365--377, 2007.

\bibitem{Coifman-Meyer}
R.R. Coifman and Y. Meyer.
\newblock Au del\`a des op\'erateurs pseudo-diff\'erentiels.
\newblock {\em Ast\'erisque}, 57, 2013.

\bibitem{Colombini-Del Santo}
F.~Colombini, D.~Del~Santo.
\newblock A note on hyperbolic operators with log- Zygmund
coefficients.
\newblock {\em J. Math. Sci. Univ. Tokyo}, 16:95--111, 2009.



\bibitem{Colombini-DelSanto-Fanelli-Metivier}
F.~Colombini, D.~Del~Santo, F.~Fanelli, and G.~M{\'e}tivier.
\newblock Time-dependent loss of derivatives for hyperbolic operators with non
  regular coefficients.
\newblock {\em Comm. Partial Differential Equations}, 38(10):1791--1817, 2013.


\bibitem{Dehman-Ervedoza}
B.~Dehman and S.~Ervedoza.
\newblock Observability estimates for the wave equation with rough coefficients.
\newblock {\em C. R. Acad. Sci. Paris}, Ser. I 355: 499--514, 2017.


\bibitem{DL:09}
B.~Dehman and G.~Lebeau.
\newblock Analysis of the {HUM} control operator and exact controllability for
  semilinear waves in uniform time.
\newblock {\em SIAM J. Control Optim.}, 48(2):521--550, 2009.




\bibitem{Fan-Zua}
F.~Fanelli and E.~Zuazua.
\newblock Weak observability estimates for 1-{D} wave equations with rough
  coefficients.
\newblock {\em Ann. Inst. H. Poincar\'e Anal. Non Lin\'eaire}, 32(2):245--277,
  2015.
  \bibitem{Ge91}  P. G\'erard,
  \newblock  Microlocal defect measures.
  \newblock{ \em  Comm. Partial Differential Equations} 16, no. 11, 1761--1794, 1991.
  
  
\bibitem{Hormander-III}
L.~H{\"o}rmander.
\newblock {\em The analysis of linear partial differential operators. {III}},
  volume 274 of {\em Grundlehren der Mathematischen Wissenschaften}.
\newblock Springer-Verlag, Berlin, 1985.
\newblock Pseudodifferential operators.


\bibitem{Leb} 
 G. Lebeau
 \newblock {\em Control for hyperbolic equations}
Journ\'ees \'equations aux d\'eriv\'ees partielles, (1992)  1--24
 
\bibitem{Lions}
J.-L. Lions.
\newblock {\em Contr\^olabilit\'e exacte, Perturbations et Stabilisation  de
  Syst\`emes Distribu\'es. Tome 1. Contr\^olabilit\'e exacte}, volume RMA 8.
\newblock Masson, 1988.



\bibitem{MeSj78}
R.B. Melrose and J.~Sj{\"o}strand.
\newblock Singularities of boundary value problems {I}.
\newblock {\em Communications in Pure and Applied Mathematics}, 35, 1982.

\bibitem{Rauch-Taylor}
J.Rauch and M. Taylor
\newblock Exponential decay of solutions to hyperbolic equations in bounded domains.  24, 79--86,1974.
\newblock {\em Indiana Univ. Math. J.}, 24, 1--68, 1988.

\bibitem{Tartar90}
L.Tartar
\newblock $H$-measures, a new approach for studying homogenisation, oscillations and concentration effects in partial differential equations.
\newblock {\em Proc. Roy. Soc. Edinburgh Sect. A}, 115 (3-4), 193--230, 1990.

\bibitem{Zhu}
H. Zhu
\newblock  Stabilization of damped waves on spheres and Zoll surfaces of revolution..
\newblock {\em ESAIM Control Optim. Calc. Var.} 24  no. 4, 1759--1788, 2018.

\end{thebibliography}
\end{document}